\newtheoremstyle{note}{12pt}{12pt}{}{}{\bfseries}{.}{.5em}{}
\title{\LARGE\textbf{Quasi-Symmetric Conjugacy for Circle Maps with a Flat Interval}}
\author{Liviana Palmisano\\ Institute of Mathematics of PAN\\
Warsaw, Poland}
\newtheorem{theo}[equation]{Theorem}
\newtheorem{prop}[equation]{Proposition}
\newtheorem{defin}[equation]{Definition}
\newtheorem{rem}[equation]{Remark}
\newtheorem{cor}[equation]{Corollary}
\numberwithin{equation}{section}
\newtheorem{lem}[equation]{Lemma}
\newcommand{\N}{{\mathbb N}}
\newcommand{\Z}{{\mathbb Z}}
\newcommand{\R}{{\mathbb R}}
\renewcommand{\S}{{\mathbb S}^1}
\newcommand{\Cd}{{{\mathcal C}^2}}
\newcommand{\C}{{\mathcal C}}
\newcommand{\F}{{\mathcal F}}
\newcommand{\G}{{\mathcal G}}
\renewcommand{\L}{{\mathscr L}}
\newcommand{\Lsec}{{\mathscr B}}
\newcommand{\Cr}{\operatorname{\mathbf{Cr}}}
\begin{document}
\maketitle
\author
\textcolor{blue}{}\global\long\def\sbr#1{\left[#1\right] }
\textcolor{blue}{}\global\long\def\cbr#1{\left\{  #1\right\}  }
\textcolor{blue}{}\global\long\def\rbr#1{\left(#1\right)}
\textcolor{blue}{}\global\long\def\ev#1{\mathbb{E}{#1}}
\textcolor{blue}{}\global\long\def\R{\mathbb{R}}
\textcolor{blue}{}\global\long\def\E{\mathbb{E}}
\textcolor{blue}{}\global\long\def\norm#1#2#3{\Vert#1\Vert_{#2}^{#3}}
\textcolor{blue}{}\global\long\def\pr#1{\mathbb{P}\rbr{#1}}
\textcolor{blue}{}\global\long\def\qq{\mathbb{Q}}
\textcolor{blue}{}\global\long\def\aa{\mathbb{A}}
\textcolor{blue}{}\global\long\def\ind#1{1_{#1}}
\textcolor{blue}{}\global\long\def\pp{\mathbb{P}}
\textcolor{blue}{}\global\long\def\cleq{\lesssim}
\textcolor{blue}{}\global\long\def\ceq{\eqsim}
\textcolor{blue}{}\global\long\def\Var#1{\text{Var}(#1)}
\textcolor{blue}{}\global\long\def\TDD#1{{\color{red}To\, Do(#1)}}
\textcolor{blue}{}\global\long\def\dd#1{\textnormal{d}#1}
\textcolor{blue}{}\global\long\def\eqdef{:=}
\textcolor{blue}{}\global\long\def\ddp#1#2{\left\langle #1,#2\right\rangle }
\textcolor{blue}{}\global\long\def\En{\mathcal{E}_{n}}
\textcolor{blue}{}\global\long\def\Z{\mathbb{Z}}
\textcolor{blue}{{} }

\textcolor{blue}{}\global\long\def\nC#1{\newconstant{#1}}
\textcolor{blue}{}\global\long\def\C#1{\useconstant{#1}}
\textcolor{blue}{}\global\long\def\nC#1{\newconstant{#1}\text{nC}_{#1}}
\textcolor{blue}{}\global\long\def\C#1{C_{#1}}
\textcolor{blue}{}\global\long\def\meas{\mathcal{M}}
\textcolor{blue}{}\global\long\def\cSpace{\mathcal{C}}
\textcolor{blue}{}\global\long\def\pspace{\mathcal{P}}

\begin{abstract}
In this paper we study quasi-symmetric conjugations of $\Cd$ weakly order-preserving circle maps with a flat interval.
Under the assumption that the maps have the same rotation number of bounded type and that bounded geometry holds we construct a quasi-symmetric conjugation between their non-wandering sets. Further, this conjugation is extended to a quasi-symmetric circle homeomorphism. Our proof techniques hinge on real-dynamic methods allowing us to construct the conjugation under general and natural assumptions.

\end{abstract}

\section{Introduction}
We contribute to the area of the understanding the dynamics of a class of continuous degree one circle maps with a flat interval, 
see \cite{VT1}, \cite{VT2}, \cite{MvSdMM}, \cite{Morgan}, \cite{5aut}, \cite{Grfun}, \cite{my}, \cite{my2}. 
In this paper we address the problem of quasi-symmetric conjugation. 
This field has been very active over the years. The reason is the close connection with two very important questions in one-dimensional dynamical systems: the renormalization conjecture (see \cite{Sull}) and the density of hyperbolicity (see \cite{Dencity}, \cite{DencityL}, \cite{Kozshenvan}, \cite{LevinVanStrien}). 
Various constructions of such conjugations have been proposed in the literature, see \cite{YoQS}, \cite{Jakobson}, \cite{Jak-Sw}, \cite{LevinVanStrien}, \cite{deFaria}, \cite{deMeloFaria}. 

The novelty of the paper stems from the fact that we employ only classical techniques coming from real dynamics. As a result we are able to reduce the hypotheses on the smoothness of the maps and on the degree of the criticality at the boundary points of the flat interval. 

Before stating our results formally we need to introduce basic assumptions and definitions. 
\subsection{Assumptions and Notations}\label{class}
We fix the clockwise orientation of the circle $\S=\R/\Z$ and use the standard metric.

A continuous circle map $f$ can be lifted using the universal cover to a continuous real map $F$ obeying $F(x + 1) -F(x) \in\Z$ for every $x\in \R$.  It will often be convenient to identify $f$ with one of its lifts $F$ and subsets of $\mathbb S^1$ with the corresponding subsets of $\mathbb R$.

\paragraph{Hypotheses.}
\begin{enumerate}
\item We consider continuous circle endomorphisms $f$ of degree one which have first derivative positive everywhere except for the closure of an open non-degenerate 
interval $U_f$ (the flat interval) on which it is equal to zero.

\item $f$ is at least twice continuously differentiable except at two points (endpoints of the flat interval).
\item Let $\left(a,b\right)$ be a preimage of $U_f$ under the natural projection of the real line on $\S$.
On some right-sided neighborhood of $b$, $f$ can be represented as
\begin{displaymath}
h_{r}\left(\left(x-b\right)^{\ell}\right),
\end{displaymath}
where $h_{r}$ is a $C^{2}$-diffeomorphism on an open neighborhood of $b$.
Analogously, there exists a $C^{2}$-diffeomorphism on a left-sided neighborhood of $a$ such that $f$ is of the form
\begin{displaymath}
h_{l}\left(\left(a-x\right)^{\ell}\right).
\end{displaymath}

\end{enumerate}
The real positive number $\ell$ is called the critical exponent of $f$.
 The class of such maps will be denoted by $\L$.

%\paragraph{Basic Notations.}
%We will introduce a simplified notation for backward and forward images of the flat interval $U$. Instead of $f^{i}(U)$ we will simply write $\underline{i}$; for example, $\underline{0}=U$. Thus, for us, underlined positive integers represent points, and underlined non-positive integers represent intervals.
%\paragraph {Distance between Points.}
%We denote by $(a,b)=(b,a)$ the  shortest  open interval between $a$ and $b$ regardless of the order of these two points. The length of that interval in the natural metric on the circle will be denoted by $|a-b|$. Following \cite{5aut}, let us adopt these notational conventions:
%\begin{itemize}
%\item $|\underline{-i}|$ stands for the length of the interval $\underline{-i}$.
%\item Consider a point $x$ and an interval $\underline{-i}$ not containing it. Then the distance from $x$ to the closer endpoint of $\underline{-i}$ will be denoted by $|(x,\underline{-i})|$, and the distance to the more distant endpoint by $|(x,\underline{-i}]|$.
%\item We define the distance between the endpoints of two intervals $\underline{-i}$ and $\underline{-j}$ analogously. For example, $|(\underline{-i},\underline{-j})|$ denotes the distance between the closest endpoints of these two intervals while $|[\underline{-i},\underline{-j})|$ stands for $|\underline{-i}|+|(\underline{-i},\underline{-j})|$.
%\end{itemize}

\paragraph{Standing Assumptions.} 
 Taken a map $f\in\L$ we will introduce a simplified notation for backward and forward images of the flat interval $U_f$. Instead of $f^{i}(U_f)$ we will  write $f^{i}$; for example, $f^{0}$ will denote the flat interval $U_f$.
 \paragraph {Distance between Points.}
 %Let $|J|$ be the length of an interval $J\subset\S$. The left and the right endpoint of $J$ are denoted by $l(J)$ and $r(J)$ respectively.
 
We denote by $(a,b)=(b,a)$ the  shortest  open interval between $a$ and $b$ regardless of the order of these two points. 
The length of that interval denoted by $|a,b|$ is the natural metric on the circle. We adopt these notational conventions:
\begin{itemize}
\item $|f^{-i}|$ stands for the length of the interval $f^{-i}$.
\item  The left and the right endpoint of $f^{-i}$ are denoted by $l(f^{-i})$ and $r(f^{-i})$ respectively.
\item Consider $x\notin f^{-i}$. Then the distance from $x$ to the closer endpoint of $f^{-i}$ will be denoted by $|x,f^{-i}|$, and the distance to the more distant endpoint will be written explicitly with $|x, r(f^{-i})|$ or $|l(f^{-i}),x|$.
\item We define the distance between the endpoints of two intervals $f^{-i}$ and $f^{-j}$ analogously. For example, $|f^{-i},f^{-j}|$ denotes the distance between the closest endpoints of these two intervals while $|l(f^{-i}), f^{-j}|$ stands for $|f^{-i}|+|f^{-i},f^{-j}|$.
\end{itemize}

%A lift $F$ inherits regularity properties of $f$ e.g. continuity, differentiability, smoothness.\\
%
%\paragraph{Standing Notations.}
%In the following we refer to maps on $\S$ with minuscule letters e.g $f$. The corresponding capital letter $F$ denotes the unique lift with the property $F(0)\in [0,1)$.
%\par
%Abusing notation we usually identify subsets of $\S$ with corresponding subsets of $\mathbb R$.

%\begin{defin}
%Let $f:\S\mapsto\S$ be a continuous function, we say that $f$ has degree one if for all $x\in\R$, $F(x+1)=F(x)+1$.
%\end{defin}
%
%\begin{defin}
%A continuous function $f:\S\mapsto\S$ is non-decreasing\footnote{or orientation preserving} if its lift is non-decreasing .%pour tous $x,y\in\R$ tels que $x\leq y$, then $F(x)\leq F(y)$.
%\end{defin}

\subsection{Basic Definitions}\label{BD}
\paragraph{Rotation Number}
The maps which are continuous and weakly order preserving can be assigned the rotation number. If $f\in \L$  and $F$ is a lift of $f$ then the rotation number of $f$ is the limit
\begin{displaymath}
\rho(f)=\lim_{n\to\infty}\frac{F^n(x)}{n} \textrm{ (mod $1$)}.
\end{displaymath}
This limit exists and its value is independent of $x$. We exclude the trivial case of maps with rational rotation numbers and in the rest of this paper we will assume that the rotation number is irrational. Then it can be written as an infinite continued fraction
\begin{displaymath}
\rho(f)=\frac{1}{a_1+\frac{1}{a_2+\frac{1}{\cdots}}},
\end{displaymath}
where $a_i$ are positive integers.

If we cut off the portion of the continued fraction beyond the $n$-th position, and write the resulting fraction in lowest terms as $\frac{p_n}{q_n}$ then the numbers $q_n$ for $n\geq 1$ satisfy the recurrence relation
\begin{equation}\label{q_n}
q_{n+1} = a_{n+1}q_n+q_{n-1};\textrm{  } q_0 = 1;\textrm{  } q_1 = a_1.
\end{equation}

The number $q_n$ is the number of times we have to iterate the rotation by $\rho(f)$ in order that the orbit of any point makes its closest return so far to the point itself (see Chapter I, Sect. I in \cite{deMvS}).

\begin{defin}
We say that the rotation number $\rho(f)$ is of bounded type if there exists a constant $M>0$ such that, for all $n\in\N$ we have $a_n< M$.
\end{defin}
\paragraph{Scaling near Critical Point}
We define a sequence of scalings
\begin{displaymath}
\tau_{n}=\frac{\left|f^0,f^{q_n}\right|}{\left|f^0,f^{q_ {n-2}}\right|}. 
\end{displaymath}
These quantities measure `the geometry' near to the boundary points of the flat interval.
When $\tau_n\rightarrow 0$ we say that the geometry of the mapping is `degenerate'.
When $\tau_n$ is bounded away from zero we say that the geometry is `bounded'.

%The proof of our results is based on the following theorem.
%\begin{theo}\label{sw}
%Suppose that $f\in\L$ has an irrational rotation number of bounded type. If the critical exponent $\ell>2$, then the sequence $\tau_n$ is bounded away from zero.
%
%\end{theo}
%Theorem \ref{sw} is referring to the second claim of Theorem 2 in \cite{5aut}.

\paragraph{Non-wandering sets}
Let $f\in\L$ with flat interval $U_f$. The non-wandering set $K_f$ of $f$ is defined as
the set of the points $x$ such that for any open neighborhood $V\ni x$ there exists an integer $n>0$ such that the intersection of $V$ and $f^n(V)$ is non-empty.
By \cite{5aut}, $K_f=\S\setminus\cup_{i=0}^{\infty}f^{-i}$.

\paragraph{Quasi-symmetric homeomorphisms}

\begin{defin}
A homeomorphism $\varphi:\S\mapsto\S$ is called $Q$-quasi-symmetric, $Q>0$ if and only if for any two
intervals $I$ and $J$ of $\S$ with a common end-point such that $|I|= |J|$, 
\[
\frac{|\varphi(I)|}{|\varphi(J)|}\leq Q.
\]
\end{defin}
The same definition apply to the more general case of homeomorphisms of metric spaces. For more details see \cite{Heinonen}.
\subsection{Discussion and Statement of the Results}
As already mentioned in the introduction there are various reasons for studying quasi-symmetric conjugations between circle maps. 

The usual line of attack is by employing complex methods and constructing a quasi-conformal conjugacy of  holomorphic extensions of the considered maps 
(see \cite{YoQS}, \cite{Jakobson}, \cite{Jak-Sw}, \cite{LevinVanStrien}). While impressive, these techniques demand making assumptions which are not relevant to the problem, 
usually the maps are required to be analytic  and their orders of the criticality to be integers and the same for both maps 
(with a notable exception of \cite{VanStrienTrevor} pushing the methods to a more general setting).

The objective of this paper is to reduce assumptions as far as possible, which is achieved by utilizing the techniques of real dynamics. 
The real methods are typically much more technically demanding  (see for example \cite{Jakobson} and \cite{deMeloFaria}) compared to the complex ones due to the lack of the 
holomorphic structure. Many tools and properties useful in the complex setting are simply not valid anymore. For example the so-called ``gluing'' lemma, which is often a very efficient tool for gluing two quasi-conformal functions is not valid in the real line (the union of $f_1(x)=0$ for $x\leq 0$ and $f_1(x)=x^2$ for $x\geq 0$ is not quasi-symmetric in $0$).

We study the subclass $\Lsec$ of $\L$ consisting of  functions with 
irrational rotation number of bounded type and for which bounded geometry holds. Our main result is 

\begin{theo}\label{maintheo}
Let $f,g$ be in $\Lsec$ having the same rotation number $\rho$ and with the non-wandering sets $K_f$ and $K_g$. 
Then there exists a quasi-symmetric circle homeomorphism  $\varphi:\S\to\S$ which conjugates $f$ and $g$ on their non-wandering sets. i.e. 
\[
	\varphi \circ f_{| K_f} = g \circ \varphi_{| K_f}. 
\]
%Moreover, $\varphi_0$ can be extended to a quasi-symmetric circle homeomorphism (i.e. there exists a quasi-symmetric homeomorphism $\varphi:\S\to\S$ such that $\varphi_{|K_f}=\varphi_0$).
\end{theo}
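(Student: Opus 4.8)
The plan is to construct $\varphi$ first on the non-wandering set $K_f$ as a limit of partial maps defined on the dynamical partitions, then extend it across each gap $f^{-i}$ by an affine-like interpolation, and finally verify the quasi-symmetry inequality by a bounded-distortion argument that rests crucially on bounded geometry and the bounded-type hypothesis. Recall from \cite{5aut} that $K_f = \S\setminus\bigcup_{i\geq 0}f^{-i}$, so the complement of $K_f$ is a countable union of open intervals (the preimages of the flat interval), indexed combinatorially in the same way for $f$ and for $g$ since the two maps share the rotation number $\rho$. For each level $n$ the points $\{f^{j}(U_f)\}$ together with the endpoints of the intervals $f^{-i}$ of generation $\leq n$ cut the circle into a dynamical partition $\mathcal{P}_n(f)$, and likewise $\mathcal{P}_n(g)$; there is a canonical combinatorial bijection $\mathcal{P}_n(f)\to\mathcal{P}_n(g)$ respecting the cyclic order and commuting with the dynamics. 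Define $\varphi_n$ on the endpoints of $\mathcal{P}_n(f)$ to realise this bijection and interpolate affinely on each atom. The maps $\varphi_n$ converge uniformly to a homeomorphism $\varphi$ because, by bounded geometry and bounded type, the mesh of $\mathcal{P}_n(f)$ tends to zero exponentially (the standard real a priori bounds for maps in $\L$; see \cite{my}, \cite{5aut}); the limit $\varphi$ restricted to $K_f$ is the desired conjugacy, since on endpoints of partition intervals — which are dense in $K_f$ — the relation $\varphi\circ f = g\circ\varphi$ holds by construction and passes to the closure.

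The heart of the matter is the quasi-symmetry estimate. Given two adjacent intervals $I,J\subset\S$ with $|I|=|J|$, I would locate the smallest generation $n$ such that both $I$ and $J$ are comparable to atoms of $\mathcal{P}_n(f)$, i.e. each of $I$, $J$ contains a definite number of level-$n$ atoms and is contained in a uniformly bounded number of them; this is possible, with $n$ depending only on $|I|/|J|$-type ratios, precisely because bounded geometry forces consecutive atoms to have comparable lengths and bounded type forces each atom to split into boundedly many children. Then $|\varphi(I)|$ and $|\varphi(J)|$ are each comparable, up to a universal constant coming from the affine interpolation, to the total $g$-length of the corresponding collections of level-$n$ atoms. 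So the problem reduces to: the ratio of $g$-lengths of two adjacent bounded-cardinality unions of level-$n$ atoms is bounded in terms of the ratio of their $f$-lengths. This in turn follows from the \emph{a priori bounds} — real bounds asserting that for maps in $\Lsec$ the geometry of the dynamical partition is comparable across the two maps, or at worst that ratios of neighbouring atoms in $\mathcal{P}_n(f)$ and in $\mathcal{P}_n(g)$ are both pinched between two constants depending only on $M$ (the bound on the $a_i$) and on the bounded-geometry constant. Technically this is where the $C^2$ hypothesis and the Koebe / cross-ratio distortion tools for maps with a flat interval enter: one controls distortion of the relevant return maps on scales where they are diffeomorphisms, using that between generations the dynamics is a composition of the smooth branch of $f$ with bounded distortion.

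For the extension of $\varphi$ from $K_f$ to all of $\S$: each complementary interval $f^{-i}$ maps to $g^{-i}$, and I would define $\varphi$ on $f^{-i}$ to be the affine map onto $g^{-i}$. Since $\varphi$ is already quasi-symmetric on the closed set $K_f$ and each complementary gap is sent affinely to the corresponding gap, quasi-symmetry of the full map follows by a standard gluing argument valid \emph{here} — unlike the pathological example mentioned in the introduction — because the gap $f^{-i}$ and its two neighbouring atoms of $K_f$ have comparable sizes (bounded geometry again), so no degeneracy of the type $0$ versus $x^2$ can occur at an endpoint of a gap; one checks the defining inequality separately for pairs $I,J$ on the same side of a gap, pairs straddling a gap with the gap small compared to $I\cup J$, and pairs with the gap comparable to $I\cup J$, each case reducing to the already-established estimate on $K_f$ or to the affine control inside the gap. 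The main obstacle, and the step demanding the real work, is the comparison of the partition geometries of $f$ and $g$ at matching generations: this is the content that replaces the quasi-conformal rigidity used in the complex approach, and it is where all the hypotheses defining $\Lsec$ — bounded type, bounded geometry, $C^2$ with critical exponent $\ell$ — are consumed.
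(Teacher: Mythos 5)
Your step on $K_f$ itself (coding points by the combinatorics of the dynamical partitions, using bounded geometry and bounded type to get exponential decay of the mesh, and deducing the conjugacy on a dense set) matches the paper's Proposition \ref{Cantorhomeo} and the spirit of Proposition \ref{qscantor}. The genuine gap is in your extension step: you define $\varphi$ on each complementary interval $f^{-i}$ as \emph{the affine map onto} $g^{-i}$ and assert that quasi-symmetry then follows by a ``standard gluing argument valid here'' because the gap and its neighbouring atoms have comparable sizes. Comparability of sizes at the top scale does not control what happens at arbitrarily small scales near an endpoint of the gap, and that is exactly where the affine extension breaks down. Take $b=r(f^{-i})$ and the triple $x=b-t$, $y=b$, $z=b+t$ with $t\ll|f^{-i}|$. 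Your extension gives $|\varphi(x),\varphi(y)|=\lambda t$ with $\lambda=|g^{-i}|/|f^{-i}|$ fixed, while $|\varphi(y),\varphi(z)|$ is governed by the ratios $|G_{n+k}|/|F_{n+k}|$ of matching elements of the two dynamical partitions of generation $n+k$ accumulating at $b$ from outside. Quasi-symmetry of $\varphi_0$ on $K_f$ only pins these ratios up to a multiplicative drift that can grow geometrically in $k$ (the two maps have bounded geometry with, in general, different constants), so $|\varphi(y),\varphi(z)|/t$ need not stay comparable to $\lambda$ as $t\to 0$, and the quasi-symmetry quotient blows up. This is the same phenomenon as the $f_1(x)=0$ versus $x^2$ example the paper warns about: being quasi-symmetric on each side of a point does not glue.

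This is precisely why the paper does \emph{not} interpolate affinely. Its Proposition \ref{extension} fills each preimage $f^{-j}$ by \emph{reflecting} the adjacent structure of $K_f$ (the nearby preimages $f^{-j-q_k}$, $f^{-j-q_{k+1}}$ and the Cantor set between them) into $f^{-j}$, rescaled so the two reflected copies meet, and iterates this construction inside the newly created ``preimages''. The effect is that at every scale the local structure of $\varphi$ just inside an endpoint of a gap mirrors the local structure just outside, so the scaling factors match across the endpoint and the case analysis of Lemmas \ref{key}, \ref{key1} and Propositions \ref{samegap}, \ref{samepreimage} closes. Your reduction of quasi-symmetry on $K_f$ to comparability of adjacent partition elements (the real bounds of Lemmas \ref{boungap1}--\ref{comparability}) is sound, but to repair the proof you must replace the affine extension by one that reproduces the Cantor-set geometry inside the gaps, or invoke and prove a quantitative extension theorem for quasi-symmetric maps of closed subsets of the line; neither is a formality, and it is where the main work of the paper lies.
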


By the second claim of Theorem 2 in \cite{5aut} any map with the irrational rotation number of bounded type and the critical exponent $\ell>2$ belongs to $\Lsec$.

The strength of our result is that we are able to construct the conjugacy under weak assumptions. We require the maps to be only $\Cd$ except for the boundary points of the flat interval. Moreover, we do not put any assumptions on the critical exponents, in particular they can be real and different for both the maps. Our work is the first contribution towards understanding the quasi-symmetric  conjugacy of maps with flat intervals. We present an extension of the conjugacy between the non-wandering sets on the whole circle. We stress that this is not straightforward due to lack of dynamical structure on the complement of non-wandering sets. We hope that our methods will be useful in construction of a global quasi-symmetric conjugacy of $f$ and $g$. We have not been able to do this yet. The difficulty is caused by unexpected distortion properties of the transition maps which require deeper understanding, see Section \ref{noQS} for more details.

As a part of the proof we develop a new technique of extending quasi-symmetric homomorphisms of Cantor sets to the whole circle. We use it for $K_f$ and $K_g$ but it holds in a much greater generality and is of independent interest. 
As we need much more notation we defer its presentation to the Appendix.
%
%An important step in the proof of Theorem \ref{maintheo} is the following construction which might be of and independent interests. 
%\begin{theo}\label{maintheo1}
%Let  $K$ and $K'$ be two Cantor sets of $\S$ having the same properties of the non-wandering sets of any function $f\in\Lsec$ and let $\tilde\varphi:K\to K'$ a quasi-symmetric homeomorphism. Then there exists a quasi-symmetric homeomorphism $\varphi:\S\to\S$ such that $\varphi_{|K}=\tilde\varphi$.
%\end{theo}
%\section{Technical Tools}

\section{Technical Tools}

\subsection{Cross-ratio}\label{distortion}
In order to control the distortion of iterates of maps in $\L$ we will use the cross-ratio $\Cr$.
\begin{defin}
If $a<b<c<d$ are four points on the circle, then we can define their \it{cross-ratio} $\Cr$ by:
\begin{equation*}
\Cr(a,b,c,d):=\frac{|b-a||d-c|}{|c-a||d-b|}.
\end{equation*}

\end{defin}

Here, we formulate the result which enables us to control the growth of the iterates of cross-ratios $\Cr$. 
The reader can refer to \cite{Swiatek} for the general case and to \cite{Grfun} for our situation.
\par
Consider a chain of quadruples
\begin{equation*}
\bigcup_{i = 0}^n \lbrace{(a_i,b_i,c_i,d_i)\rbrace} % i \in \{0,\dots,n\}
\end{equation*}
such that each is mapped onto the next by the map $f$. If the following conditions hold:
\begin{itemize}
\item There exists an integer $k \in \N$, such that each point of the circle belongs to at most $k$ of the intervals $(a_i,d_i)$.
\item The intervals $(b_i,c_i)$ do not intersect $f^0$.
\end{itemize}
Then, there exists a constant $K > 0$, independent of the set of quadruples, such that:
\begin{equation*}
\log\frac{\Cr(a_n, b_n, c_n, d_n)}{\Cr(a_0, b_0, c_0, d_0)}\leq K
\end{equation*}

\subsection {Continued Fractions and Partitions}\label{dynamicalpartition}
Let $f\in\L$. Since $f$ is order-preserving and has no periodic points, there exists an order-preserving and continuous map $h:\mathbb{S}^{1}\rightarrow\mathbb{S}^{1}$ such that $h\circ f=R_{\rho}\circ h$, where $\rho$ is the rotation number of $f$ and $R_{\rho}$ is the rotation by $\rho$. In particular, the order of points in an orbit of $f$ is the same as the order of points in an orbit of $R_{\rho}$. Therefore, results about $R_{\rho}$ can be translated into results about $f$, via the semiconjugacy $h$.

We define the so called dynamical partitions $\F_ {n}$ of $\mathbb S^1$ to study the geometric properties of $f$, see \cite{Grfun}. The partition $\F_{n}$ is generated by the first $q_{n}+q_{n+1}$ preimages of the flat interval and consists of
\begin{displaymath}
\left\{ f^{-i}: 0\leq i\leq q_{n+1}+q_{n}-1\right\},
\end{displaymath}
together with the gaps between these intervals.

We distinguish two kinds of gaps:
\begin{itemize}
\item The `long' gaps which are of the form
\begin{displaymath}
F^{i}_{ n}:=f^{-i}(F_{ n}^{0}), i=0,1,\ldots, q_{n+1}-1
\end{displaymath}
where $F_{ n}^{0}$ is the interval between $ f^{-q_n}$ and $f^{0}$ for $n$ even or the interval between $f^{0}$ and $f^{-q_n}$ for $n$ odd.
\end{itemize}
\begin{itemize}
\item The `short' gaps which are of the form
\begin{displaymath}
F^{i}_{ n+1}:=f^{-i}(F_{ n+1}^{0}), i=0,1,\ldots, q_{n}-1
\end{displaymath}
where $F_{ n+1}^{0}$ is the interval between $f^{0}$ and $f^{-q_{n+1}}$ for $n$ even or the interval between $f^{-q_{n+1}}$ and $f^{0}$ for $n$ odd.
\end{itemize}
We briefly explain the structure of the partitions. Take two consecutive dynamical partitions of order $n$ and $n+1$. 
The latter is a refinement of the former. All `short' gaps of $\F_{ n}$ become `long' gaps of $\F_{ n+1}$ while all `long' gaps of $\F_{ n}$ split into $a_{n+2}$ preimages of $f^{0}_{n}$ and $a_{n+2}$ `long' gaps and one `short' gaps of the next partition $\F_{ n+1}$:
\begin{equation}\label{div}
F_{ n}^{i}=\bigcup_{j=1}^{a_{n+2}}f^{-i-q_n-jq_{n+1}}\cup\bigcup_{j=0}^{a_{n+2}-1}F^{i+q_n+jq_{n+1}}_{ n+1}\cup F^{i}_{ n+2}.
\end{equation}

%Moreover when there are not possibility of misunderstanding we will use a simplified notation for gaps and preimages. For all $n$ instead of $f^{-i}_{\scriptscriptstyle (0),n}$ and $F^i_{\scriptscriptstyle (0), n}$ we will simply write $f^{-i}_{n}$ and $F^i_{ n}$. The $n^{eme}$ dynamical partition will be notated by $\F_{ n}$.

\begin{prop}\label{figo1}
Let $f\in\L$, then there exists a constant $C > 0$, such that for all $n \in \N$, if
$f^{-j}$ is a preimage belonging to the dynamical partition $\F_{n}$ and $F^{j}$ is one of the two gaps adjacent to $f^{-j}$, then:
$$\frac{\left|f^{-j}\right|}{\left|F^{j}\right|}\geq C$$
\end{prop}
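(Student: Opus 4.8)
The claim is that every preimage $f^{-j}$ in the dynamical partition $\F_n$ is comparable in size to each of its two adjacent gaps, uniformly in $n$ and $j$. The plan is to reduce everything to a single initial configuration near the flat interval and then transport it by pulling back along the dynamics, using the cross-ratio control from Section \ref{distortion} to guarantee that the distortion accumulated along the pullback is bounded.

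First I would set up the base case. By the structure of the partitions (the refinement formula \eqref{div} and the description of long and short gaps), every interval $f^{-j}$ of $\F_n$ with its adjacent gap $F^j$ is the pullback under some iterate $f^{j}$ of a configuration sitting at level $0$: a definite interval of the form (flat interval or a low-order preimage) next to the germ gap $F^0_m$ or $F^0_{m+1}$ for $m\in\{n-1,n\}$. The key geometric input at level $0$ is that $f^0$ and its first few preimages are not too small compared to the gaps $F^0_n$, $F^0_{n+1}$ bordering them; this follows from the real-bounds / Swiatek-type estimates for maps in $\L$ (equivalently, the control on $|f^0,f^{q_n}|$ relative to $|f^0,f^{q_{n-2}}|$ and the structure of returns). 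In other words, at the ``top'' there is a uniform constant $C_0$ with $|f^{-i}|/|F^i| \ge C_0$ for the finitely-many relevant top-level configurations.

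Next I would pull back. Given $f^{-j}\in\F_n$ with adjacent gap $F^j$, write $f^{-j}=f^{-j_0}(I)$ where $I$ is one of the top-level intervals and $j_0\le j$; simultaneously $F^j$ is the pullback of the corresponding top-level gap $G$. I form a chain of quadruples $(a_i,b_i,c_i,d_i)$, $i=0,\dots,j_0$, where $(b_i,c_i)$ tracks the interval $I$ pulled back and $(a_i,d_i)$ tracks $I$ together with its two adjacent gaps, so that the cross-ratio $\Cr(a_i,b_i,c_i,d_i)$ is (comparable to) the ratio $|I_i|/(|I_i|+|G_i^{\mathrm{left}}|+|G_i^{\mathrm{right}}|)$ we want to bound below. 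To apply the cross-ratio inequality I must verify the two hypotheses: that the intervals $(b_i,c_i)$ avoid the flat interval $f^0$, and that the family $\{(a_i,d_i)\}$ has bounded multiplicity (each point of the circle lies in at most $k$ of them). The first holds because the $(b_i,c_i)$ are preimages of $f^0$ of order $<j_0$, hence disjoint from $f^0$; the second is the standard bounded-multiplicity property of the intervals appearing along a backward orbit within the window defining $\F_n$, which follows from the combinatorics of the dynamical partitions (a point belongs to boundedly many of the relevant pullbacks, with the bound independent of $n$ because the rotation number need not even be of bounded type here — the multiplicity bound for a single level is absolute). Then the cross-ratio inequality gives $\Cr(a_{j_0},\dots)\ge e^{-K}\Cr(a_0,\dots)\ge e^{-K}C_0'$, and unwinding the comparison between cross-ratios and length ratios yields $|f^{-j}|/|F^j|\ge C$.

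The main obstacle I anticipate is the base case rather than the pullback: one must produce the uniform lower bound $C_0$ at the top level, i.e. control the geometry of $f^0$ and its first few preimages against the germ gaps $F^0_n,F^0_{n+1}$ for all $n$, and this is exactly where the hypotheses on $f\in\L$ (the $C^2$ structure away from the endpoints and the power-law form near the endpoints, giving the cross-ratio tool and the Swiatek real bounds) must be invoked carefully. A secondary technical point is bookkeeping the two-sided nature of the gaps — one needs the bound for \emph{both} adjacent gaps, which means running the argument for the two possible choices of the quadruple $(a_i,d_i)$ (extending $I$ to the left and to the right), and checking the multiplicity hypothesis for each. Neither of these is conceptually hard, but getting the constants to be genuinely independent of $n$ requires being careful that every estimate used is a ``one-level'' estimate with an absolute multiplicity bound.
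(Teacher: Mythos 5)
The paper itself does not prove Proposition \ref{figo1}: it refers to \cite{5aut}, pp.~606--607. Your overall strategy --- push $f^{-j}$ and its adjacent gap forward to the flat interval by $f^{j}$, control the accumulated distortion with the cross-ratio inequality of Section \ref{distortion} (checking bounded multiplicity and avoidance of $f^{0}$), and reduce everything to a base case at the top level --- is the right one and is essentially the argument of the cited reference.

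There is, however, a genuine gap in the way you set up the quadruples, and it breaks the argument as written. You place the pulled-back interval $I_i$ in the middle slot $(b_i,c_i)$ and let $(a_i,d_i)$ span $I_i$ together with both adjacent gaps, claiming that $\Cr(a_i,b_i,c_i,d_i)$ is comparable to $|I_i|/(|I_i|+|G_i^{\mathrm{left}}|+|G_i^{\mathrm{right}}|)$. But with $|b_i-a_i|=|G_i^{\mathrm{left}}|$, $|c_i-b_i|=|I_i|$, $|d_i-c_i|=|G_i^{\mathrm{right}}|$ one has
\[
\Cr(a_i,b_i,c_i,d_i)=\frac{|G_i^{\mathrm{left}}|\,|G_i^{\mathrm{right}}|}{\left(|G_i^{\mathrm{left}}|+|I_i|\right)\left(|I_i|+|G_i^{\mathrm{right}}|\right)}
=\frac{1}{\left(1+|I_i|/|G_i^{\mathrm{left}}|\right)\left(1+|I_i|/|G_i^{\mathrm{right}}|\right)},
\]
which is close to $1$ exactly in the bad configuration ($I_i$ tiny relative to its gaps) and small in the good one; it is essentially the reciprocal of what you want. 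Since the cross-ratio inequality is one-sided (the final cross-ratio of the chain is at most $e^{K}$ times the initial one, so the initial one is bounded \emph{below} by the final one), with your quadruple it yields an \emph{upper} bound on $|f^{-j}|/|F^{j}|$, not the lower bound of the proposition. In addition, the hypothesis that $(b_i,c_i)$ avoids $f^{0}$ fails at the last step of your chain, where the forward image of the preimage is $f^{0}$ itself. Both defects are repaired by putting the \emph{gap} in the middle slot (gaps never meet $f^{0}$), flanked by $f^{-j}$ on one side and a suitable neighbour $N$ on the other: a lower bound on the bottom cross-ratio then bounds $|f^{-j}|/(|f^{-j}|+|F^{j}|)$ from below, as required. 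The price is that the base case is no longer the trivial observation that $|f^{0}|=|U_f|$ is a fixed positive length: one must also show that the top-level image of $N$ --- essentially the closest-return preimage $f^{-q_m}$ --- is comparable to the top gap $|f^{0},f^{-q_m}|$. That estimate is the genuinely non-trivial real bound at the flat interval, proved in \cite{5aut} by a separate induction using the power-law form of $f$ at the endpoints of $U_f$, and it, rather than the size of $f^{0}$, is the base case your plan must supply.
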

\begin{cor}\label{cor:trouzero}
The lengths of gaps of the dynamical partition $\F_{n}$ tend to zero at least exponentially fast when $n\to\infty$.
\end{cor}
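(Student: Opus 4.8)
The plan is to show that two refinement steps shrink every gap by a fixed factor, and then telescope. Let $C>0$ be the constant of Proposition~\ref{figo1} and put $\lambda:=\frac1{1+C}\in(0,1)$. The heart of the matter is the claim that, for every $n\ge 0$, every gap $G$ of $\F_{n+2}$ satisfies
\[
  |G|\ \le\ \lambda\,|\widehat G|,
\]
where $\widehat G$ denotes the (unique) gap of $\F_n$ containing $G$; this is well defined because $\F_{n+2}$ refines $\F_n$, so each interval of $\F_{n+2}$ lies in one interval of $\F_n$, and a gap of $\F_{n+2}$ cannot lie inside a preimage of $\F_n$. Granting the claim, taking the maximum over all gaps gives
\[
  \max\{\,|G|: G\text{ a gap of }\F_{n+2}\,\}\ \le\ \lambda\,\max\{\,|G|: G\text{ a gap of }\F_n\,\},
\]
and iterating down to $\F_0$ or $\F_1$, together with the trivial bound $|G|\le 1$, yields $\max\{\,|G|: G\text{ a gap of }\F_n\,\}\le\lambda^{\lfloor n/2\rfloor}$. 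Since $\lambda^{\lfloor n/2\rfloor}\le\lambda^{-1}(\sqrt\lambda)^{\,n}$ with $\sqrt\lambda\in(0,1)$, this is exactly the asserted exponential decay.

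To prove the claim I would distinguish whether $\widehat G$ is a long or a short gap of $\F_n$. If $\widehat G$ is long, the subdivision rule \eqref{div} already partitions it in $\F_{n+1}$ into $a_{n+2}$ preimages and $a_{n+2}+1$ gaps of $\F_{n+1}$; if $\widehat G$ is short, then $\widehat G$ is a long gap of $\F_{n+1}$, and \eqref{div} applied at level $n+1$ partitions it in $\F_{n+2}$ into $a_{n+3}$ preimages and $a_{n+3}+1$ gaps of $\F_{n+2}$. In either case the pieces occur in the order gap, preimage, gap, preimage, $\dots$, preimage, gap (one more gap than preimages), so each gap $G'$ of the finer partition occurring in this subdivision is adjacent to a preimage $f^{-j}$ which lies strictly inside $\widehat G$ and — as one reads off from the index ranges in \eqref{div} (the largest index in the long case is $q_{n+2}+q_{n+1}-1$, in the short case at most $q_{n+3}+q_{n+2}-1$) — belongs to the same dynamical partition in which $G'$ is a gap. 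Proposition~\ref{figo1} then gives $|f^{-j}|\ge C\,|G'|$; since $f^{-j}$ and $G'$ are disjoint subintervals of $\widehat G$, we get $|\widehat G|\ge |f^{-j}|+|G'|\ge (1+C)\,|G'|$, i.e.\ $|G'|\le\lambda\,|\widehat G|$. Finally, any gap $G$ of $\F_{n+2}$ contained in $\widehat G$ is contained in one such $G'$ (in the long case $G'$ is a gap of $\F_{n+1}$ containing $G$; in the short case $G'=G$), so $|G|\le|G'|\le\lambda\,|\widehat G|$, which is the claim.

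The step that requires genuine care is the one just described: one must make sure that the preimage flanking a prescribed finer gap really lies inside the prescribed $\widehat G$ — rather than merely somewhere on the circle — and that it sits in the partition for which Proposition~\ref{figo1} is being invoked; this is precisely why the long and short gaps of $\F_n$ have to be treated separately and why the explicit splitting formula \eqref{div} together with its index bookkeeping is needed. Everything else — the disjointness estimate $|\widehat G|\ge|f^{-j}|+|G'|$ and the telescoping of the factors $\lambda$ — is routine. A small point worth noting is that Corollary~\ref{cor:trouzero} then also shows $\Leb(K_f)=0$, since $K_f$ is covered by the gaps of $\F_n$ for every $n$.
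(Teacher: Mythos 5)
Your proof is correct. The paper itself offers no argument for Corollary~\ref{cor:trouzero} --- it simply refers the reader to \cite{5aut}, pp.~606--607 --- and what you have written is the standard derivation from Proposition~\ref{figo1}: two refinement steps contract every gap by the factor $\lambda=1/(1+C)$, because the subdivision rule \eqref{div} (applied at level $n$ for long gaps and at level $n+1$ for short ones) places next to each new gap a preimage of the \emph{same} finer partition lying inside the old gap. Your index check that these flanking preimages (largest index $i+q_{n+2}\le q_{n+2}+q_{n+1}-1$, resp.\ $i+q_{n+3}\le q_{n+3}+q_{n+2}-1$) really belong to $\F_{n+1}$, resp.\ $\F_{n+2}$, is exactly the point that needs care, and you have handled it; the only unstated (but equally routine) companion fact is that a gap of $\F_{n+2}$ cannot sit inside one of the $\F_{n+1}$-preimages produced by \eqref{div}, since those are also preimages of $\F_{n+2}$.
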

The proofs of Proposition \ref{figo1} and Corollary \ref{cor:trouzero} can be found in \cite{5aut}, pag. 606-607.

\section{Proof of Theorem \ref{maintheo}}

 From now on, we will always consider two functions $f,g\in\Lsec$ with the same irrational rotation number. 
This implies that, for all $n\in\N$, the partition generated by the dynamics of $f$, $\F_{ n}$ and the dynamical partition generated by $g$, 
$\G_{ n}$ have the same combinatorial structure. So, if $f^{-i}$ is a preimage of $\F_{ n}$, then the corresponding preimage of $\G_{ n}$ will be denoted by $g^{-i}$. 
In the same way if $F^i$ is a gap of $\F_{ n}$, then $G^i$ will denote the corresponding gap of $\G_{ n}$.

The proof of Theorem \ref{maintheo} consists of $3$ steps contained in separated subsections. These are:
\begin{enumerate}
\item construction of a conjugacy $\varphi_{0}$ of $f$ and $g$ on their non-wandering sets,
\item extension of $\varphi_{0}$ to a circle function $\varphi$,
\item proof of the fact that $\varphi$ is quasi-symmetric.
\end{enumerate}

\subsection{Conjugacy on the non-wandering sets}
In this section we construct a conjugacy between two functions $f$ and $g$ in $\Lsec$ on their non-wandering sets. This is formalized in the following proposition.
\begin{prop}\label{Cantorhomeo}
Let $f$ and $g$ be two functions in $\Lsec$ with the same irrational rotation number $\rho$ and with the non-wandering sets  $K_f$ and $K_g$. Then there exists a homeomorphism $\varphi_{0}:K_f\mapsto K_g$ such that $\varphi_{0}\circ f_{|K_f}=g\circ\varphi_{0}$.

\end{prop}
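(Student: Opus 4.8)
The plan is to build the conjugacy $\varphi_0$ as a limit of maps defined on the successive dynamical partitions, exploiting the fact that $\F_n$ and $\G_n$ have the same combinatorial structure for all $n$. First I would define, for each $n$, a map $\varphi_0^{(n)}$ on the union of the intervals $f^{-i}$, $0 \le i \le q_{n+1}+q_n-1$, together with the endpoints of all gaps, sending $f^{-i}$ affinely (or just its endpoints, which is all that matters in the limit) onto $g^{-i}$, and the endpoints of each gap $F^i$ to the endpoints of the corresponding gap $G^i$. The combinatorial equivalence guarantees this is order-preserving and well defined on the common refinement. Since $\F_{n+1}$ refines $\F_n$ and the refinement rule \eqref{div} is the same for $f$ and $g$, the maps $\varphi_0^{(n)}$ are consistent on the points where they are both defined: a point $x \in K_f$ lies, for each $n$, in a unique gap $F^{i(n)}_{\cdot}$ of $\F_n$, and its image is forced to lie in $G^{i(n)}_{\cdot}$.

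Second I would check convergence. By Corollary \ref{cor:trouzero} the lengths of the gaps of $\G_n$ tend to zero exponentially, so the nested sequence of gaps $G^{i(n)}_{\cdot}$ containing the prospective image of $x$ shrinks to a single point; this defines $\varphi_0(x)$ unambiguously for every $x \in K_f$, and symmetrically $\varphi_0$ is a bijection onto $K_g$ with the analogous inverse construction. Continuity in both directions follows again from Corollary \ref{cor:trouzero}: if $x,y \in K_f$ lie in the same gap of $\F_n$ then $\varphi_0(x),\varphi_0(y)$ lie in the same gap of $\G_n$, whose diameter is exponentially small in $n$; running this with $n \to \infty$ gives uniform continuity of $\varphi_0$ and of $\varphi_0^{-1}$, hence $\varphi_0$ is a homeomorphism $K_f \to K_g$.

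Third, the conjugacy relation $\varphi_0 \circ f_{|K_f} = g \circ \varphi_0$ is essentially built in: the dynamics permutes the pieces of $\F_n$ by $f^{-i} \mapsto f^{-i+1}$ (for $i \ge 1$) and $f(F^i_n)=F^{i-1}_n$, and the same indices govern $\G_n$, so at the level of the approximating maps we have $\varphi_0^{(n)}\circ f = g \circ \varphi_0^{(n)}$ wherever defined; passing to the limit, using continuity and the facts that $f(K_f)=K_f$, $g(K_g)=K_g$ and that $f$ maps gaps of $\F_n$ to gaps of $\F_n$ respecting indices, yields the identity on all of $K_f$. (One minor point: $f$ collapses $f^0=U_f$, but $U_f \cap K_f$ consists only of the two endpoints, which are tracked correctly.)

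The main obstacle I expect is not any single hard estimate — Corollary \ref{cor:trouzero} does the analytic work — but rather the bookkeeping needed to verify that the combinatorial identification of $\F_n$ with $\G_n$ is genuinely compatible across all $n$ and respects the $f$- and $g$-actions simultaneously, i.e. that the map on indices induced by $f$ on gaps of $\F_n$ is the \emph{same} index map induced by $g$ on gaps of $\G_n$. This is where the hypothesis that $f$ and $g$ share the same rotation number is used in full force, via the common continued-fraction data $(q_n)$ and the recursion \eqref{div}; once that compatibility is pinned down, everything else is a routine limiting argument.
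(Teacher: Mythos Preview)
Your proposal is correct and follows essentially the same approach as the paper: both constructions identify a point of $K_f$ with the nested sequence of gaps of $\F_n$ containing it and send it to the intersection of the corresponding gaps of $\G_n$, using Corollary \ref{cor:trouzero} for well-definedness and continuity and the common rotation number for the combinatorial matching. The only cosmetic difference is that the paper phrases this directly as an encoding $x\mapsto\bigcap_{n\ge1}G^{k_n}$ rather than as a limit of approximants $\varphi_0^{(n)}$, and dispatches the conjugacy relation in one line (``by construction $\varphi_0$ is clearly a conjugacy''), whereas you spell out the index-equivariance more carefully.
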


\begin{proof}

Notice that $K_f$ and $K_g$ are Cantor sets, so the construction of an homeomorphism between them is classic. 
Moreover we will give a sketch of the proof in order to familiarize with the homeomorphism $\varphi_{0}$ which will be often used in the subsequent parts of the paper. 

By Subsection \ref{BD} and Subsection \ref{dynamicalpartition}, $K_f$ and $K_g$ can be written as 
\[
K_f=\bigcap_{n\geq1}\left(\bigcup_{0\leq i_n\leq q_{n+1}-1}F^{i_n}\cup\bigcup_{0\leq j_n\leq q_{n}-1} F^{j_n}\right)
\]
and 
\[
K_g=\bigcap_{n\geq1}\left(\bigcup_{0\leq i_n\leq q_{n+1}-1}G^{i_n}\cup\bigcup_{0\leq j_n\leq q_{n}-1} G^{j_n}\right).
\]

Clearly, any point of $K_f$ and $K_g$ is uniquely encoded by a sequence $\cbr{k_{n}}_{n\geq1}$ such that for any $n$, 
$0\leq k_n\leq q_{n+1}-1$ or $0\leq k_n\leq q_{n}-1$.

\par
We define a mapping $\varphi_{0}:K_f\mapsto K_g$
by setting 
\[
\varphi_{0}(x):=\bigcap_{n\geq 1}G^{k_n},
\]
where $\cbr{k_{n}}_{n\geq0}$ encodes $x$ in the set $K_f$.
\par
Since $f$ and $g$ have the same rotation number, they share the same sequence of first return time $\{q_n\}_{n\in\N}$, thus there is a bijection between the set of sequences encoding points of $K_f$ with the set of sequences encoding points of $K_g$. Consequently $\varphi_{0}$ is a bijection. 
\par
In order to prove that $\varphi_{0}$ is continuous (the continuity of $\varphi^{-1}_{0}$ follows
by exchanging $f$ and $g$) we fix a sequence $\cbr{x_{n}}_{n\geq0}$
such that $x_{n}\in K_f$ and $x_{n}\to x\in K_f$. By Corollary \ref{cor:trouzero}, for any $\epsilon>0$ we may 
choose $m\in\N$ such that the length of any gap of the dynamical partition $\G_{m}$ is smaller than $\epsilon/2$. 
Moreover, we know that there exists a unique gap $F^{k_m}$ of the partition $\F_{m}$ such that $x\in F^{k_m}$. 
We recall that $K_f$ is a Cantor set (every point is a limit point) and that the convergence $x_{n}\to x$ holds in the Euclidean topology. 
So, there exists $n_{0}\in\N$ such that for any $n\geq n_{0}$ we have
$x_{n}\in F^{k_{m}}$ and in particular, by definition of $\varphi_{0}$, both $\varphi_{0}(x_{n})$ and $\varphi_{0}(x)$ are in $G^{k_{m}}$.
Finally for all $n\geq n_0$ $|\varphi_{0}(x_{n})-\varphi_{0}(x)|\leq\epsilon$, which concludes the proof.

By construction $\varphi_{0}$ is clearly a conjugacy of $f$ and $g$.

\end{proof}
\subsection{Extension of $\varphi_0$ to a global circle function}
We extend the function $\varphi_{0}$ constructed in Proposition \ref{Cantorhomeo} to all preimages of the flat interval.
The choices of the extension are restricted by our principal aim, which is to obtain a quasi-symmetric circle map.

Moreover,  while the function $\varphi_0$ was given by the dynamics, the construction of its extension is more delicate because of the lack of a dynamical structure on the preimages. This is one of the main reasons for which this case needs to be treated differently that the case of circle maps with one critical point. 

Our main idea is to reflect the points of $K_f$ in the preimages in order to create inside a ``dynamics'' which will naturally define the required extension.
 
\paragraph{Notation.} In order to make the construction clearer from now on we will use the following notation: all preimages $f^{-j_n}$ and all gaps $F^{j_n}$ of the dynamical partition $\F_n$ will be notated by $f^{-j_n}_{\scriptscriptstyle (0)}$ and $F^{j_n}_{\scriptscriptstyle (0)}$ and the partition containing them $\F_{\scriptscriptstyle (0), n}$. In order to lighten the notation we will omit the generation index $n$ of the partition,  where it is not relevant for the construction.

The main result of this section is:
\begin{prop}\label{extension}
There exists a circle homeomorphism $\varphi:\S\to\S$ such that $\varphi_{|K_f}=\varphi_{0}$.

\end{prop}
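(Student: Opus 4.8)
The plan is to extend $\varphi_0$ one preimage at a time, exploiting the fact that each preimage $f^{-j}$ of the flat interval meets $K_f$ exactly in its two endpoints, while the rest of $K_f$ accumulates on $f^{-j}$ from both sides through the gap structure described in \eqref{div}. Concretely, I would first define $\varphi$ on $U_f=f^0$: its endpoints $l(f^0),r(f^0)$ lie in $K_f$, so $\varphi_0$ is already defined there, and I would simply let $\varphi$ map $f^0$ affinely onto $g^0$. For a general preimage $f^{-j}$, the idea announced in the paper is to build an auxiliary ``dynamics'' on $f^{-j}$ by reflecting the accumulation of $K_f$ from one side of $f^{-j}$ into its interior. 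Using the combinatorial splitting \eqref{div}, the points of $K_f$ immediately to one side of $f^{-j}$ form a nested sequence of gaps whose images under the reflection give a countable nested family of subintervals of $f^{-j}$ shrinking to a point; I would define $\varphi$ on $f^{-j}$ by sending this reflected nested family to the corresponding reflected nested family inside $g^{-j}$ (built the same way from $\varphi_0$ on the $K_g$-side), again interpolating affinely on the complementary intervals. Since $\varphi_0$ is already a homeomorphism and respects the combinatorics, each such extension is a homeomorphism of $f^{-j}$ onto $g^{-j}$ matching $\varphi_0$ at the two endpoints.

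Next I would check that these local definitions patch together into a single map $\varphi:\S\to\S$. The sets $K_f$ and $\{f^{-j}\}_{j\ge 0}$ together cover $\S$, the preimages are pairwise disjoint open intervals, and on each preimage the definition above agrees with $\varphi_0$ on the (boundary) points that lie in $K_f$; so $\varphi$ is well defined and $\varphi_{|K_f}=\varphi_0$ by construction. Monotonicity is inherited: $\varphi_0$ is order preserving on $K_f$ (it respects the circular order encoded by the symbol sequences), and on each preimage the extension is order preserving, so $\varphi$ is a weakly order-preserving bijection of $\S$, hence an orientation-preserving bijection.

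The delicate point is continuity of $\varphi$ (equivalently, that $\varphi$ is a homeomorphism rather than merely a monotone bijection). At a point $x\in K_f$ this is exactly the argument already run in the proof of Proposition~\ref{Cantorhomeo}: given $\epsilon$, choose $m$ by Corollary~\ref{cor:trouzero} so that every gap of $\G_m$ is shorter than $\epsilon$, locate the element of $\F_m$ containing $x$, and use that $\varphi$ maps that element into the corresponding element of $\G_m$ — the only new input is that on a preimage $f^{-j}$ in $\F_m$ the extension stays inside $g^{-j}$, which holds by construction. At a point $x$ in the interior of some preimage $f^{-j}$ continuity is clear since $\varphi$ is affine there; the only genuinely new case is an endpoint of a preimage approached from inside that same preimage, where continuity follows because the reflected nested family was chosen to shrink to the endpoint on both the $f$-side and the $g$-side. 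I expect this bookkeeping — verifying that the reflected ``dynamics'' inside each $f^{-j}$ is compatible across generations of the partition and shrinks correctly, so that no discontinuity is introduced at the preimage endpoints — to be the main obstacle; the rest is a routine assembly of a monotone bijection from locally defined monotone pieces. Quasi-symmetry of $\varphi$ is \emph{not} addressed here; it is deferred, as stated, to the third step of the proof of Theorem~\ref{maintheo}.
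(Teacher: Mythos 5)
Your proposal does prove the literal statement of Proposition~\ref{extension}, and it shares the paper's announced key idea (reflecting the accumulation of $K_f$ into each preimage), but the execution is genuinely different from the paper's and the difference matters for what comes next. The paper reflects from \emph{both} sides of each preimage $f^{-j}$, with scaling factors $\alpha\geq 1$ chosen so that the left- and right-reflected copies of $K_{0,f}$ meet in exactly one point; this reproduces inside $f^{-j}$ a full copy of the gap/preimage structure of the dynamical partitions, whose new ``preimages'' $f^{-l}_{\scriptscriptstyle(1)}$ are each \emph{comparable} with a genuine preimage (Remarks~\ref{howfillpreimages} and~\ref{CP}). These new intervals are then filled in recursively, producing partitions $\F_{\scriptscriptstyle(m),n}$ for all $m$, and the final map is obtained as $\bigcup\varphi_i$ on $\bigcup K_{i,f}=\S$: there is no affine interpolation anywhere, and the homeomorphism property follows simply because an order-preserving bijection of the circle onto itself is a homeomorphism. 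Your version --- one-sided reflection giving a nested family shrinking to a point, plus affine interpolation on the complementary intervals and on $U_f$ itself --- is shorter and suffices for the bare existence claim (indeed, affine interpolation on every preimage with no reflection at all would already do that). What it loses is precisely the structure the rest of the paper depends on: the quasi-symmetry proof (Lemma~\ref{key1}, Propositions~\ref{samegap} and~\ref{samepreimage}, via Remark~\ref{comnew}) argues by locating, between any two test points, an element of some $\F_{\scriptscriptstyle(m),n}$ comparable with the ambient gap, and this requires the recursive two-sided construction with controlled comparability constants. Affine pieces glued to the Cantor structure are exactly the situation the introduction warns about (the failure of the ``gluing'' lemma on the real line), so deferring quasi-symmetry while committing to affine interpolation would leave Step~3 of the proof of Theorem~\ref{maintheo} unworkable. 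If you intend your extension only as a proof of Proposition~\ref{extension} in isolation it is fine; as the second step of the paper's three-step program it is the wrong construction.
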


\begin{proof}

Let us denote by $K_{0,f}$ and $K_{0,g}$ the non-wandering sets of $f$ and $g$ respectively. We recall that $K_{0,f}=\S\setminus\bigcup_{i\geq 0} f^{-i}_{\scriptscriptstyle(0)}$ and $K_{0,g}=\S\setminus\bigcup_{i\geq 0} g^{-i}_{\scriptscriptstyle(0)}$.

We construct by induction two sequences of subintervals of $\S$
\[
 \{ \{f_{\scriptscriptstyle(i)}^{-j}\}_{j>0}\}_{i\geq1} \text{  and  }  \{ \{g_{\scriptscriptstyle(i)}^{-j}\}_{j>0}\}_{i\geq1},
\]

two sequences of subsets of $\S$, 
\[
 \{K_{i,f}\}_{i\geq1} \text{  and  } \{K_{i,g}\}_{i\geq1} 
\]
and a sequence of functions
\[
\{\varphi_{i}:K_{i,f}\to K_{i,g}\}_{i\geq1}
\]

which satisfy the following properties:

\begin{enumerate}
\item for all $j>0$ we consider the subintervals $f_{\scriptscriptstyle (i-1)}^{-j}$ and $g_{\scriptscriptstyle(i-1)}^{-j}$ of $\S$ generated in the step $i-1$. 
Then there exist $k'_i(j), k_i(j)>0$ and intervals $f_{\scriptscriptstyle(i)}^{-l}\subset f_{\scriptscriptstyle (i-1)}^{-j}$ 
(resp. $g_{\scriptscriptstyle(i)}^{-l}\subset g_{\scriptscriptstyle (i-1)}^{-j}$) with $l\geq\min\{k_i(j), k'_i(j)\}$ such that $f_{\scriptscriptstyle(i)}^{-l}$ 
(resp. $g_{\scriptscriptstyle(i)}^{-l}$) 
is comparable with $f_{\scriptscriptstyle(i-1)}^{-l}\in(l(f_{\scriptscriptstyle(i-1)}^{-k_i(j)}), f_{\scriptscriptstyle(i-1)}^{-j})
\cup(f_{\scriptscriptstyle(i-1)}^{-j},r(f_{\scriptscriptstyle(i-1)}^{-k'_i(j)}))$ (resp. $g_{\scriptscriptstyle(i-1)}^{-l}
\in(l(g_{\scriptscriptstyle(i-1)}^{-k_i(j)}), g_{\scriptscriptstyle(i-1)}^{-j})\cup(g_{\scriptscriptstyle(i-1)}^{-j},r(g_{\scriptscriptstyle(i-1)}^{-k'_i(j)}))$ ) 

\item $K_{i,f}=K_{i-1,f}\cup\bigcup_{j>0} (f^{-j}_{\scriptscriptstyle(i-1)}\setminus\bigcup_{l\geq\min\{k_i(j), k'_i(j)\}} {f^{-l}_{\scriptscriptstyle(i)}})$ 
(resp. $K_{i,g}=K_{i-1,g}\cup\bigcup_{j>0} (g^{-j}_{\scriptscriptstyle(i-1)}\setminus\bigcup_{l\geq\min\{k_i(j), k'_i(j)\}} g^{-l}_{\scriptscriptstyle(i)})$).
%\item $K_{i,f}=K_{i-1,f}\bigcup_{n}\bigcup_{i_n} (f^{-i_n}_{(i-1)}\setminus\bigcup_{j_{i_n}\geq i_n+q_{k_{i_n}}} f^{-j_{i_n}}_{(i)})$ where, for all $n$, $i_n$ and $j_{i_n}$, $f^{-j_{i_n}}_{(i)}$ is comparable with $f^{-j_{i_n}}_{(i)}\in(l(f_{(i-1)}^{-i_n+q_{k_{i_n}}}), f^{-i_n}_{(i-1)})\cup(f^{-i_n}_{(i-1)},r(f_{(i-1)}^{-i_n+q_{k_{i_n}+1}}))$.
%\item for all $j>0$ we consider the subinterval $g_{(i-1)}^{j}$ of $\S$ generated in the step $i-1$. Then there exist $k(j)>0$ and intervals $g_{(i)}^{l_j}\subset g_{(i-1)}^{j}$ with $l_j\geq k(j)$ such that $g_{(i)}^{l_j}$ is comparable with $g_{(i-1)}^{l_j}\in(l(g_{(i-1)}^{k(j)}), g_{(i-1)}^{j})\cup(g_{(i-1)}^{j},r(g_{(i-1)}^{k(j)+1}))$ and \\ $K_{i,g}=K_{i-1,g}\bigcup_{j>0} (g^{j}_{(i-1)}\setminus\bigcup_{l_{j}\geq k(j)} g^{-l_{j}}_{(i)})$
%% \item $K_f\subset K_{i-1,f}\subset K_{i,f}$
%% \item $K_g\subset K_{i-1,g}\subset K_{i,g}$
 \item $\varphi_{{i}|_{K_{i-1,f}}}=\varphi_{i-1}$
\end{enumerate}

Let us start the construction for $i=1$.

We fix $j\geq 0$ and we consider the preimages $f^{-j}_{\scriptscriptstyle (0)}$ and $g^{-j}_{\scriptscriptstyle (0)}$ of the dynamical partitions $\F_m$ and $\G_m$. 
Let $k$ be the smallest positive integer such that,

\[
|l(f^{-j-q_{k}}_{\scriptscriptstyle (0)}),f^{-j}_{\scriptscriptstyle (0)}|+|f^{-j}_{\scriptscriptstyle (0)},r(f^{-j-q_{k+1}}_{\scriptscriptstyle (0)})|\leq |f^{-j}_{\scriptscriptstyle (0)}|
\]
and 
\[
|l(g^{-j-q_{k}}_{\scriptscriptstyle (0)}),g^{-j}_{\scriptscriptstyle (0)}|+|g^{-j}_{\scriptscriptstyle (0)},r(g^{-j-q_{k+1}}_{\scriptscriptstyle (0)})|\leq |g^{-j}_{\scriptscriptstyle (0)}|. 
\]
Notice that in the previous inequalities we suppose that $k$ is even. Due to the symmetry the case $k$ being odd is analogous.
For all $\alpha\geq 1$ we generate the following subsets of $f^{-j}_{\scriptscriptstyle (0)}$

%Let now consider the following two subsets $R^j_f$ and $L^j_f$ of $f^{-j}_{\scriptscriptstyle (0)}$
\[
R^j_{\alpha, f}=\left\{-\alpha(x-r(f^{-j}_{\scriptscriptstyle (0)}))+r(f^{-j}_{\scriptscriptstyle (0)}) | x\in K_{0,f} \cap (f^{-j}_{\scriptscriptstyle (0)},r(f^{-j-q_{k+1}}_{\scriptscriptstyle (0)})) \right\}
\]
and
\[
L^{j}_{\alpha, f}=\left\{-\alpha(x-l(f^{-j}_{\scriptscriptstyle (0)}))+l(f^{-j}_{\scriptscriptstyle (0)}) |  x\in K_{0,f}\cap (l(f^{-j-q_{k}}_{\scriptscriptstyle (0)}),f^{-j}_{\scriptscriptstyle (0)})  \right\}
\]
Analogously we define the respectively two subsets of $g^{-j}_{\scriptscriptstyle (0)} $, $R^j_{\alpha, g}$ and $R^j_{\alpha, g}$.
%\[
%R^j_{\alpha, g}=\left\{-\alpha(x-r(g^{-j}_{\scriptscriptstyle (0)}))+r(g^{-j}_{\scriptscriptstyle (0)}) | x\in K_{0,g} \cap (g^{-j}_{\scriptscriptstyle (0)},r(g^{-j-q_{k+1}}_{\scriptscriptstyle (0)})) \right\}
%\]
%and
%\[
%L^{j}_{\alpha, g}=\left\{-\alpha(x-l(g^{-j}_{\scriptscriptstyle (0)}))+l(g^{-j}_{\scriptscriptstyle (0)}) |  x\in K_{0,g}\cap (l(g^{-j-q_{k}}_{\scriptscriptstyle (0)}),g^{-j}_{\scriptscriptstyle (0)})  \right\}
%\]

Let $\alpha_1,\alpha'_1\geq 1$ such that either $L_{\alpha_1,f}^{j}\cap R_{\alpha_1 ,f}^{j}$ and $L_{\alpha'_1,g}^{j}\cap R_{\alpha'_1 ,g}^{j}$ is a single point. 

In order to simplify the notation we denote by $L_{1,f}^{j}=L_{\alpha_{1},f}^{j}$, $R_{1,f}^{j}=R_{\alpha'_{1},f}^{j}$, $L_{1,g}^{j}=L_{\alpha'_{1},g}^{j}$ and $R_{1,f}^{j}=R_{\alpha'_{1},f}^{j}$.

Observe that  $L_{1,f}^{j}=f^{-j}_{\scriptscriptstyle (0)}\setminus\bigcup_{l\geq j+q_{k}}f^{-l}_{\scriptscriptstyle(1)}$ where, for every $l$, 
$f^{-l}_{\scriptscriptstyle(1)}$ 
is comparable with the preimage 
$f^{-l}_{\scriptscriptstyle (0)}\subset  (l(f^{-j-q_{k}}_{\scriptscriptstyle (0)}),f^{-j}_{\scriptscriptstyle (0)})$ 
and $R_{1,f}^{j}=f^{-j}_{\scriptscriptstyle (0)}\setminus\bigcup_{l\geq j+q_{k+1}}f^{-l}_{\scriptscriptstyle(1)}$ with $f^{-l}_{\scriptscriptstyle(1)}$ 
comparable with the preimage $f^{-l}_{\scriptscriptstyle (0)}\subset  (f^{-j}_{\scriptscriptstyle (0)},r(f^{-j-q_{k+1}}_{\scriptscriptstyle (0)}))$. 
Let use denote $k_1(j):=j+q_k$ and $k'_1(j):=j+q_{k+1}$ . Repeating the construction for all preimages we define 

\[
K_{1,f}=K_{0,f}\cup\bigcup_{j\geq 0}  (L_{1,f}^{j}\cup R_{1,f}^{j})=\bigcup_{j\geq 0}(f^{-j}_{\scriptscriptstyle (0)}\setminus\bigcup_{l\geq\min\{k_1(j), k'_1(j)\}}f^{-l}_{\scriptscriptstyle(1)})
\]
and 
\[
K_{1,g}=K_{0,g}\cup\bigcup_{j\geq 0} (L_{1,g}^{j}\cup R_{1,g}^{j})=\bigcup_{j\geq 0}(g^{-j}\setminus\bigcup_{l_\geq\min\{k_1(j), k'_1(j)\}}g^{-l}_{\scriptscriptstyle(1)})
\] 
and $\varphi_1:K_{1,f}\to K_{1,g}$ as follows:
\begin{itemize}
 \item If $x\in K_{0,f}$ $\varphi_1(x)=\varphi_0(x)$,
 \item

If 
$x\in L_{1,f}^{j}$ then there exists $y\in K_{0,f}\cap (l(f^{-j-q_{k}}_{\scriptscriptstyle (0)}),f^{-j}_{\scriptscriptstyle (0)}) $ such that $x=-\alpha_1(y-l(f^{-j}_{\scriptscriptstyle (0)}))+l(f^{-j}_{\scriptscriptstyle (0)})$. So, $\varphi_1(x)=-\alpha'_1(\varphi_0(y)-l(g^{-j}_{\scriptscriptstyle (0)}))+l(g^{-j}_{\scriptscriptstyle (0)})\in L_{1,g}^{j}$.

\item 
If 
$x\in R^{1,j}_f$, as in the previous case, $x=-\alpha_1(y-r(f^{-j}_{\scriptscriptstyle (0)}))+r(f^{-j}_{\scriptscriptstyle (0)})$ with $y\in K_{0,f} \cap (f^{-j}_{\scriptscriptstyle (0)},r(f^{-j-q_{k+1}}_{\scriptscriptstyle (0)}))$. Then $\varphi_1(x)=-\alpha'_1(\varphi_0(y)-r(g^{-j}_{\scriptscriptstyle (0)}))+r(g^{-j}_{\scriptscriptstyle (0)})\in R_{1,g}^{j}$. 
\end{itemize}

Suppose now that we have already constructed $ \{f_{\scriptscriptstyle(n)}^{-j}\}_{j>0}$,  $ \{g_{\scriptscriptstyle(n)}^{-j}\}_{j>0}$, 
$K_{n,f}$, $K_{n,g}$, $\varphi_n$, and we construct $ \{f_{\scriptscriptstyle(n+1)}^{-j}\}_{j>0}$,  $ \{g_{\scriptscriptstyle(n+1)}^{-j}\}_{j>0}$, $K_{n+1,f}$, $K_{n+1,g}$, $\varphi_{n+1}$. Let us consider all the subintervals $f^{-j}_{\scriptscriptstyle(n)}$ generated in the previous step. By construction there exist $j'$ and $k'_n(j'), k_n(j')>0$ such that $f^{-j}_{\scriptscriptstyle(n)}\subset f^{-j'}_{\scriptscriptstyle(n-1)}$ and $j\geq k_n(j')$. Then we distinguish two cases:

 \begin{enumerate}
 \item $j=k_n(j')$ or $j=k'_n(j')$ %(in this case $r(f^{-j}_{\scriptscriptstyle (n)})=l(f^{-k'_n(j')}_{\scriptscriptstyle (n)})$ or $r(f^{-k_n(j')}_{\scriptscriptstyle (n)})=l(f^{-j}_{\scriptscriptstyle (n)}))$,
  \item  $j\neq \{k_n(j'), k'_n(j')\}$.
 \end{enumerate}
 %If the subinterval $f^{-j}_{\scriptscriptstyle(n)}$ fills the case 1, then it will call ``extreme''.
 In the second one we repeat exactly the construction of the initialization case generating 
 $L_{n+1,f}^{j}\cup R_{n+1,f}^{j}=f^{-j}_{\scriptscriptstyle(n)}\setminus\bigcup_{l\geq \min\{k_{n+1}(j), k'_{n+1}(j)\}}f^{-l}_{\scriptscriptstyle(n+1)}$ and  
 $R_{n+1,g}^{j}\cup L_{n+1,g}^{j}=g^{-j}_{\scriptscriptstyle(n)}\setminus\bigcup_{l\geq \min\{k_{n+1}(j), k'_{n+1}(j)\}}g^{-l}_{\scriptscriptstyle(n+1)}$.
 
  Let us now consider a subinterval $f^{-j}_{\scriptscriptstyle(n)}$ which satisfies condition $1$. Because of symmetry we can suppose that 
  $f^{-j}_{\scriptscriptstyle (n)}\subset L_{n,f}^{j'}$ (the other case is analogous).
   Let $k$ be the smallest positive integer such that,

\[
|l(f^{-j-q_{k}}_{\scriptscriptstyle(n)}),f^{-j}_{\scriptscriptstyle(n)}|\leq |f^{-j}_{\scriptscriptstyle(n)}|
\]
and 
\[
|l(g^{-j-q_{k}}_{\scriptscriptstyle(n)}),g^{-j}_{\scriptscriptstyle(n)}|\leq |g^{-j}_{\scriptscriptstyle(n)}|. 
\]
For all $\alpha\geq 1$, we consider the subset
\[
CL^{j}_{\alpha,f}=\left\{-\alpha(x-l(f^{-j}_{\scriptscriptstyle (n)}))+l(f^{-j}_{\scriptscriptstyle (n)}) |  x\in K_{n,f}\cap (l(f^{-j-q_{k}}_{\scriptscriptstyle (n)}),f^{-j}_{\scriptscriptstyle (n)}) \right\}
\]
and in particular we can choose $\alpha_{n+1,c}\geq 1$ such that the set $CL_{n+1,f}^{j}=CL^{j}_{\alpha_{n+1,c},f}$ and $ f^{-j}_{\scriptscriptstyle(n)}$ intersect only in the point $r( f^{-j}_{\scriptscriptstyle(n)})$. In the same way we can generate $CL_{n+1,g}^{j}$. 

If $f^{-j}_{\scriptscriptstyle (n)}\subset R_{n,f}^{j'}$ in the same way we produce $CR_{n+1,f}^{j}$ and $CR_{n+1,g}^{j}$.

Finally, we define 
 \[
 K_{n+1,f}=K_{n,f}\cup\bigcup_{j}(L_{n+1,f}^{j}\cup R_{n+1,f}^{j})\cup\bigcup_{j} CL_{n+1,f}^{j} \cup\bigcup_{j} CR_{n+1,f}^{j}
 \]
 and
\[
  K_{n+1,g}=K_{n,g}\cup\bigcup_{j}(L_{n+1,g}^{j}\cup R_{n+1,g}^{j})\cup\bigcup_{j} CL_{n+1,g}^{j} \cup\bigcup_{j} CR_{n+1,g}^{j} 
 \]

and $\varphi_{n+1}:K_{n+1,f}\to K_{n+1,f}$ as follows. 
 Let $x\in K_{n+1,f}$.
\begin{itemize}
\item If $x\in K_{n,f}$, $\varphi_{n+1}(x)=\varphi_n(x)$.
 \item
If 
$x\in CL_{n+1,f}^{j}$ then $\varphi_{n+1}(x)=-\alpha'_{n+1,c}(\varphi_n(y)-l(g^{-j}_{\scriptscriptstyle (n)}))+l(g^{-j}_{\scriptscriptstyle (n)})\in L_{n+1,g}^{j}$ with $y\in K_{n,f}\cap (l(f^{-j-q_{k}}_{\scriptscriptstyle(n)}),f^{-j}_{\scriptscriptstyle(n)})$. 
\item 
If 
$x\in CR_{n+1,f}^j$, then $\varphi_{n+1}(x)=-\alpha'_{n+1,c}(\varphi_n(y)-r(g^{-j}_{\scriptscriptstyle (n)}))+r(g^{-j}_{\scriptscriptstyle (n)})\in R_{n+1,g}^{j}$ with $y\in K_{n,f}\cap ((f^{-j}_{\scriptscriptstyle(n)}, r(f^{-j-q_{k+1}}_{\scriptscriptstyle(n)}))$. 
\end{itemize}
Analogously if $x\in L_{n+1,f}^{j}$ or $x\in R_{n+1,f}^{j}$.

 We can now define $\varphi:=\bigcup\varphi_i:\bigcup K_{i,f}\to\bigcup K_{i,g}$. 
Observe that the length of the subintervals $f^{-j}_{\scriptscriptstyle(i)}$ tends to zero with $i$, so $\bigcup K_{i,f}=\S=\bigcup K_{i,g}$. 
By construction $\varphi$ is an order preserving bijection of the circle, therefore it is a circle homeomorphism.
%Moreover, by the construction $\varphi$ is clearly an homeomorphism \TDD{should I really say something more??}. 

\end{proof}
Let us make some observation concerning the proof of Proposition \ref{extension}. We also introduce notations for the new elements produced there.
\begin{rem}\label{howfillpreimages} 
 Let $f^{-i_n}_{\scriptscriptstyle(0)}$ be a preimage of the dynamical partition $\F_{\scriptscriptstyle(0),n}$. In the first step, 
 we reproduce in $f^{{-i_n}}_{\scriptscriptstyle(0)}$ the same structure of gaps and preimages presented in Subsection \ref{dynamicalpartition}. We will denote by $f^{-j_m}_{\scriptscriptstyle (1)}$ and $F^{j_m}_{\scriptscriptstyle (1)}$ the new ``preimages'' and ``gaps'' comparable with some preimage and gap of $\F_{\scriptscriptstyle(0),m}$. Abusing the vocabulary we will say that they belong to the ``dynamical partition'' $\F_{\scriptscriptstyle (1),m}$. 
 In the step $n$  all the preimages $f^{-l_k}_{\scriptscriptstyle (n)}$ of $\F_{\scriptscriptstyle (n),k}$ will then be subdivided generating new ``preimages''
$f^{-j_m}_{\scriptscriptstyle (n+1)}$ and ``gaps'' $F_{\scriptscriptstyle (n+1)}^{-j_m}$ comparable with elements of $\F_{\scriptscriptstyle (n), m}$ and belonging to the ``dynamical partition'' $\F_{\scriptscriptstyle (n+1), m}$.
\end{rem}

\begin{defin}\label{extreme}
 Let $f^{-j}_{\scriptscriptstyle(m)}$ be a preimage of $\F_{\scriptscriptstyle (m), n}$. 
 If there exists another preimage $f^{-j_1}_{\scriptscriptstyle (m)}$ of  $\F_{\scriptscriptstyle (m), n}$ 
 such that $r(f^{-j}_{\scriptscriptstyle (m)})=l(f^{-j_1}_{\scriptscriptstyle (m)})$ or $r(f^{-j_1}_{\scriptscriptstyle (m)})=l(f^{-j}_{\scriptscriptstyle (m)})$ 
 then we say that $f^{-j}_{\scriptscriptstyle(m)}$ is a right extreme or a left extreme respectively preimage. 
 \end{defin}
\begin{rem}\label{CP}
Let $m\in\N$ and let $f^{-i_n}_{\scriptscriptstyle(m)}$ be a right (resp. left) extreme  preimage of $\F_{\scriptscriptstyle (m), n}$ and let   $f^{-i_{n+k}}_{\scriptscriptstyle(m)}$ be the right (resp. left) extreme preimage of $\F_{\scriptscriptstyle (m+1), n+k}$ which is contained in $f^{-i_n}_{\scriptscriptstyle(m)}$. Then, by construction $f^{-i_n}_{\scriptscriptstyle(m)}$ and $f^{-i_{n+k}}_{\scriptscriptstyle(m)}$ are comparable.
\end{rem}

\subsection{$\varphi$ is quasi-symmetric}

We state a few lemmas which play a key role in proving that $\varphi$ is quasi-symmetric. 
From now on, when there is not possibility of confusion in denoting the preimages and the gaps of the dynamical partitions presented in Subsection \ref{dynamicalpartition} we will omit the subscript $(0)$.
\begin{lem}\label{cor:trouzero1}
For all $m$, the lengths of gaps of the dynamical partition $\F_{\scriptscriptstyle(m), n}$ tend to zero at least exponentially fast when $n\to\infty$.
\end{lem}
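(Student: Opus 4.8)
\textbf{Proof plan for Lemma \ref{cor:trouzero1}.}

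The plan is to reduce the statement for an arbitrary generation index $m$ to the already-established case $m=0$ (Corollary \ref{cor:trouzero}) by exploiting the self-similar way in which the ``dynamical partitions'' $\F_{\scriptscriptstyle(m),n}$ were manufactured in the proof of Proposition \ref{extension}. Recall that by Remark \ref{howfillpreimages} each preimage $f^{-j}_{\scriptscriptstyle(m-1)}$ of $\F_{\scriptscriptstyle(m-1),n}$ is subdivided, via the two affine reflections $L_{\bullet}$ and $R_{\bullet}$, into a copy of the genuine dynamical partition of Subsection \ref{dynamicalpartition}: the new ``gaps'' $F^{j_k}_{\scriptscriptstyle(m)}$ inside $f^{-j}_{\scriptscriptstyle(m-1)}$ are, up to the affine factors $\alpha$, $\alpha'$ in play, images of the genuine gaps $F^{j_k}_{\scriptscriptstyle(0)}$ of $\F_{\scriptscriptstyle(0),k}$ rescaled to fit inside $f^{-j}_{\scriptscriptstyle(m-1)}$. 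So the first step is to make precise the comparability asserted in Remark \ref{howfillpreimages}: there is a constant (depending only on the geometry, not on $n$, $k$ or the particular preimage) relating $|F^{j_k}_{\scriptscriptstyle(m)}|/|f^{-j}_{\scriptscriptstyle(m-1)}|$ to $|F^{j_k}_{\scriptscriptstyle(0)}|/|f^0|$ — i.e. each copy of the partition inside a preimage is a \emph{bounded} affine distortion of the model one. I would then induct on $m$.

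The key steps, in order, are: (i) By Corollary \ref{cor:trouzero} the model gaps satisfy $|F^{j_k}_{\scriptscriptstyle(0)}| \leq C\lambda^{k}|f^0|$ for some $\lambda<1$ and $C>0$; more relevantly, every gap of $\F_{\scriptscriptstyle(0),n}$ has length $\leq C\lambda^{n}$. (ii) Fix $m$ and a gap $F$ of $\F_{\scriptscriptstyle(m),n}$. By construction $F$ sits inside a preimage $f^{-j}_{\scriptscriptstyle(m-1)}$ of some generation, and $F$ is comparable to $\bigl(|f^{-j}_{\scriptscriptstyle(m-1)}|\bigr)\cdot\bigl(|F'|/|f^0|\bigr)$ where $F'$ is the corresponding gap of the model partition $\F_{\scriptscriptstyle(0),\cdot}$; crucially the ``order'' of $F'$ in the model partition is at least $n - (\text{generation of the enclosing preimage})$, and in fact the total generation count only goes up as we descend, so the model order is $\geq n - m$ roughly — one must bookkeep the indices carefully here. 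Combining with (i), $|F| \leq |f^{-j}_{\scriptscriptstyle(m-1)}|\cdot C\lambda^{n-O(m)}$. (iii) Iterating the nesting down through $m$ levels, $|f^{-j}_{\scriptscriptstyle(m-1)}|$ itself is a gap-or-preimage of a lower-generation partition and hence already small once its own model-order is large; feeding this recursion in, one gets $|F|\leq C_m \lambda^{n}$ for a constant $C_m$ depending on $m$ only, which is exactly exponential decay in $n$ with $m$ fixed. (iv) Conclude.

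The main obstacle I expect is the bookkeeping in step (ii)–(iii): keeping straight which generation index decreases and by how much when one passes from a gap of $\F_{\scriptscriptstyle(m),n}$ to the enclosing preimage and then to the model partition, and verifying that the affine rescaling factors $\alpha,\alpha'$ (and the cumulative products of them across $m$ levels) stay bounded by a constant depending on $m$ alone. The boundedness of a \emph{single} reflection's distortion follows from Proposition \ref{figo1} together with the defining inequalities for $k$ in the construction (the reflected piece is chosen to just fill the preimage, so $\alpha$ is comparable to $|f^{-j}_{\scriptscriptstyle(m-1)}|$ divided by the length of the reflected neighbourhood of an endpoint, and Proposition \ref{figo1} bounds that ratio); the subtlety is only that the constant in Proposition \ref{figo1} is uniform, so the product over the $m$ nested levels is at worst $C^{m}$, which is fine since $m$ is fixed. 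Once this uniformity is in hand, the exponential decay is immediate from Corollary \ref{cor:trouzero}.
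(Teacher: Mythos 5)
Your plan is correct and follows the same route as the paper, which simply invokes Corollary \ref{cor:trouzero} for $m=0$ and then transfers the exponential decay to $m>0$ via the comparability of the rescaled copies asserted in Remark \ref{howfillpreimages}. The bookkeeping you flag (uniformity of the affine reflection factors across the $m$ nested levels) is exactly the content the paper delegates to that remark, so no further argument is needed.
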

\begin{proof}
For $m=0$ we use Corollary \ref{cor:trouzero}. By Remark \ref{howfillpreimages} the claim follows for $m>0$. 
\end{proof}

\begin{lem}\label{bboungap}

Fix $f\in\Lsec$. Let $ f^{-j}$ be a preimage of the dynamical partition $\F_{ n}$ and let $F^{j}$ be one of the two gaps adjacent to $ f^{-j}$. If $ f^{-j}$ and  $F^{j}$ are contained in the same gap of the partition $\F_{ n-1}$, then $ f^{-j}$ and $F^{j}$ are comparable.
\end{lem}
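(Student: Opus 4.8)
```
The plan is to exploit the refinement structure \eqref{div} of the dynamical partitions together with Proposition \ref{figo1}. Suppose $f^{-j}$ is a preimage of $\F_n$ and $F^j$ an adjacent gap, both lying inside a single gap of $\F_{n-1}$. That ambient gap of $\F_{n-1}$ is either a ``long'' gap $F_{n-1}^{i}$ or a ``short'' gap. If it is a short gap of $\F_{n-1}$, then it becomes a long gap of $\F_n$ and contains no preimage of $\F_n$ in its interior other than possibly its own boundary structure, so this situation essentially cannot occur; the substantive case is when $f^{-j}$ and $F^j$ sit inside a long gap $F_{n-1}^{i}$. By \eqref{div}, $F_{n-1}^{i}$ decomposes into $a_{n+1}$ preimages of the form $f^{-i-q_{n-1}-sq_{n}}$, the $a_{n+1}$ long gaps $F_n^{i+q_{n-1}+sq_n}$, and one short gap $F_n^{i}$. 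So $f^{-j}$ is one of these preimages and $F^j$ is one of the long or short gaps of $\F_n$ sitting next to it inside $F_{n-1}^{i}$.

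The key point is that all of these pieces are pairwise comparable. First I would show that the long gaps $F_n^{i+q_{n-1}+sq_n}$ and the preimages $f^{-i-q_{n-1}-sq_n}$ interleaved in $F_{n-1}^{i}$ all have comparable lengths: applying Proposition \ref{figo1} to each such preimage $f^{-j}$ and its adjacent gap $F^j$ inside $\F_n$ gives $|f^{-j}|/|F^j| \geq C$, which already yields one direction. For the reverse direction I would use the cross-ratio tool of Subsection \ref{distortion}: the chain of iterates carrying the configuration inside $F_{n-1}^{i}$ back to the configuration inside the ``mother'' gap $F_{-1}^{0}$ near the flat interval satisfies the bounded-intersection and ``do not intersect $f^0$'' hypotheses (this is the standard argument behind the construction of dynamical partitions), so cross-ratios are distorted by a bounded factor; combined with bounded geometry ($\tau_n$ bounded away from zero, which controls the configuration near the flat interval), this gives the matching upper bound $|f^{-j}|/|F^j| \leq C'$. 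The short gap case is handled the same way since the short gap $F_n^i$ is sandwiched between, and adjacent to, these comparable pieces.

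The main obstacle I anticipate is making the comparability near the flat interval quantitative: one must control the ratios among the intervals $f^{-q_{n-1}-sq_n}$, the gaps between them, and the short gap, all sitting in the ``initial'' long gap $F_{-1}^{0}$ (or rather the first long gap that is not a preimage itself), and this is exactly where the bounded geometry hypothesis $\tau_n \not\to 0$ and the bounded type of the rotation number ($a_n < M$) must be invoked — without them the interleaved preimages could shrink relative to the surrounding gaps. Once that base configuration is shown to have all ratios bounded above and below by constants depending only on $f$, the cross-ratio inequality transports this to every long gap $F_{n-1}^i$ with a uniform loss, and Proposition \ref{figo1} supplies the remaining lower bound. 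I would also remark that the hypothesis ``$f^{-j}$ and $F^j$ contained in the same gap of $\F_{n-1}$'' is exactly what guarantees we are in the long-gap subdivision picture \eqref{div} rather than comparing a preimage with the gap separating two different $\F_{n-1}$-pieces, where comparability can genuinely fail.
```
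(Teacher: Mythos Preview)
The paper does not give a self-contained proof of this lemma: its entire argument is the single line ``See Lemma~5.1 in \cite{my}.'' So there is no in-paper proof to compare your sketch against; your proposal is in effect an attempt to reconstruct what that cited lemma must contain.

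As a reconstruction, your plan is reasonable and identifies the right ingredients. One direction, $|f^{-j}|/|F^{j}|\geq C$, is indeed immediate from Proposition~\ref{figo1} and does not even need the ``same gap of $\F_{n-1}$'' hypothesis. For the reverse bound you correctly isolate where the hypotheses of $\Lsec$ enter: bounded type (so that only $a_{n+1}\leq M$ pieces appear in the subdivision~\eqref{div}) and bounded geometry (so that the base configuration near the flat interval has all relevant ratios uniformly controlled), with the cross-ratio inequality of Subsection~\ref{distortion} transporting these bounds to an arbitrary long gap $F_{n-1}^{i}$. Two points deserve more care if you flesh this out: first, the cross-ratio inequality as stated is one-sided (an upper bound on the growth of $\Cr$ under forward iteration), so you must choose your quadruples so that the inequality runs in the direction you need; second, your notation ``$F_{-1}^{0}$'' is not defined---the base configuration you want is the long gap $F_{n-1}^{0}$ adjacent to the flat interval, whose internal ratios are exactly what the sequence $\tau_n$ measures. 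With those adjustments your outline matches the standard real-dynamics argument one expects behind the citation.
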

\begin{proof}
See Lemma $5.1$ in \cite{my}. 
\end{proof}
\begin{lem}\label{boungap1}
For all $f\in\Lsec$, every two adjacent gaps of the dynamical partition $\F_{ n}$ are comparable.
\end{lem}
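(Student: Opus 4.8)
The plan is to set up, for two adjacent gaps $G_1, G_2$ of $\F_n$, an appropriate chain of intervals pulled back to a combinatorially simple configuration near the flat interval, and then use the cross-ratio tool of Subsection \ref{distortion} together with the already-established comparability results (Proposition \ref{figo1}, Lemma \ref{bboungap}, Corollary \ref{cor:trouzero}). First I would classify the pair $(G_1,G_2)$ according to the refinement structure described in \eqref{div}: a gap of $\F_n$ is either a ``long'' gap $F^i_n$ ($0\le i\le q_{n+1}-1$) or a ``short'' gap $F^i_{n+1}$ ($0\le i\le q_n-1$). Two gaps are adjacent only if they are separated by a single preimage $f^{-j}$ belonging to $\F_n$; that preimage is their common ``bridge.'' The cases to handle are: (i) both adjacent gaps lie in the same gap of the coarser partition $\F_{n-1}$ — then Lemma \ref{bboungap} directly gives that each of $G_1$ and $G_2$ is comparable with the separating preimage $f^{-j}$, hence $G_1$ and $G_2$ are comparable with each other; (ii) $f^{-j}$ is an endpoint situation, i.e. $G_1$ and $G_2$ sit in two different gaps of $\F_{n-1}$, which by the combinatorics of \eqref{div} happens only for boundedly many $j$ near the ``seams'' of the subdivision.

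The substantive case is (ii). Here I would push the configuration forward by the dynamics until the separating preimage becomes the flat interval $f^0$ itself (or one of the first few preimages $f^0, f^{-q_n}, f^{-q_{n+1}}$), using that $f^{-j} = f^{-i}(f^0)$ for the appropriate $i$ and that the forward iterate $f^i$ maps the relevant chain of intervals onto a configuration adjacent to $f^0$. One must check the two hypotheses of the cross-ratio inequality: that the family of intervals $(a_i,d_i)$ covering the chain has bounded multiplicity (this follows because the chain is contained in consecutive elements of a dynamical partition, each point being covered a bounded number of times — here is where one uses that $q_n$ returns organize the orbit), and that the inner intervals $(b_i,c_i)$ stay off $f^0$ (arrange $b_i,c_i$ to be endpoints of the gaps, which by construction do not meet $f^0$ except at the terminal step, and handle that last step by hand). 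Applying the cross-ratio bound then transfers comparability of the terminal configuration — where everything is expressed via the scalings $\tau_k$, which are bounded below by the bounded-geometry hypothesis defining $\Lsec$, and via Proposition \ref{figo1} — back to comparability of $G_1$ and $G_2$.

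The main obstacle I anticipate is the bookkeeping in case (ii): identifying exactly which adjacent pairs of gaps straddle a seam of \eqref{div}, making sure the forward iterate used to normalize them does not blow up distortion (which is precisely what the cross-ratio lemma controls, provided the covering-multiplicity hypothesis is verified carefully), and checking that at the terminal, combinatorially bounded configuration near $f^0$ the comparability is genuinely available — this is where bounded geometry ($\tau_k \ge \text{const}$) is essential and where a map in $\L$ with degenerate geometry would fail. A secondary technical point is that the ``short'' gap $F^i_{n+1}$ of $\F_n$ becomes a ``long'' gap of $\F_{n+1}$, so when comparing a short gap with an adjacent long gap one should pass to the partition $\F_{n+1}$ and again invoke Lemma \ref{bboungap} or Proposition \ref{figo1} on the refined structure. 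Once all finitely many combinatorial types are dispatched, the uniform constant is the maximum of the finitely many constants produced, which is what the statement asserts.
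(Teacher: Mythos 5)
The paper offers no argument for this lemma: it is quoted verbatim as Proposition 3 of \cite{my}, so there is no in-paper proof to measure your plan against. Your overall architecture --- split adjacent pairs of gaps according to whether the separating preimage $f^{-j}$ is a ``new'' preimage of $\F_{n}$ (so that both gaps and $f^{-j}$ lie in a single gap of $\F_{n-1}$ and Lemma \ref{bboungap} applies on each side) or an ``old'' one, and in the latter case transport the configuration by the dynamics to a model configuration at the flat interval controlled by the cross-ratio inequality and bounded geometry --- is the standard real-dynamics route and is of the same flavor as the arguments in \cite{my} and \cite{5aut}.

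Two points need repair. First, the claim that the seam case ``happens only for boundedly many $j$'' is false: it occurs exactly when $f^{-j}$ already belongs to $\F_{n-1}$, i.e.\ for all $j< q_n+q_{n-1}$, a number that grows with $n$. What is bounded is the number of combinatorial \emph{types} after you normalize by applying $f^{j}$; you must still run the cross-ratio and covering-multiplicity verification uniformly over all these $j$, which is precisely where the bounded intersection multiplicity of chains inside a dynamical partition has to be checked. Second, and more seriously, the terminal configuration is not ``genuinely available'' for free: after normalization you must compare the two gaps adjacent to $f^{0}$, namely the intervals between $f^{-q_n}$ and $f^{0}$ and between $f^{0}$ and $f^{-q_{n+1}}$, whereas the bounded-geometry hypothesis controls the scalings $\tau_k$, which are ratios of distances to the \emph{forward} images $f^{q_k}$. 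Converting the forward-orbit information into estimates on the backward orbit requires pushing through the critical point using the local model $h_{r}\left(\left(x-b\right)^{\ell}\right)$, and this transfer (together with the asymmetry between $q_n$ and $q_{n+1}$, where the bounded-type assumption enters) is the analytic heart of Proposition 3 of \cite{my}; in your plan it is asserted rather than proved. Until that step is supplied, the argument is a correct outline with its central estimate missing.
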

\begin{proof}
This is Proposition $3$ in \cite{my}. 
\end{proof}

\begin{lem}\label{boungap}
Let $f\in\Lsec$ and let $F^{j}$ be a gap of the dynamical partition $\F_{n}$. 
Then $F^{j}$ is comparable  with the gap of the partition $\F_{ n-1}$ which contains it.
\end{lem}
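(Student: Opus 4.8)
The plan is to compare $F^j$ with the gap $\tilde F$ of $\F_{n-1}$ that contains it by unwinding the subdivision rule \eqref{div} and then applying the comparability Lemmas \ref{bboungap} and \ref{boungap1} finitely many times; the whole point is that a rotation number of bounded type bounds, uniformly in $n$, the number of pieces produced by one refinement step.

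First I would reduce to the interesting case. A gap of $\F_n$ is disjoint from every preimage of $\F_{n-1}$, hence it is contained in a gap $\tilde F$ of $\F_{n-1}$. If $\tilde F$ is a ``short'' gap of $\F_{n-1}$, then $\tilde F$ is itself a ``long'' gap of $\F_n$, so $F^j=\tilde F$ and there is nothing to prove. So assume $\tilde F=F^i_{n-1}$ is a ``long'' gap of $\F_{n-1}$. By \eqref{div} (with $n$ replaced by $n-1$), $\tilde F$ is the disjoint union of $a_{n+1}$ preimages belonging to $\F_n$, $a_{n+1}$ ``long'' gaps of $\F_n$ and one ``short'' gap of $\F_n$, listed consecutively along $\tilde F$; moreover $F^j$ is one of these $a_{n+1}+1$ gaps. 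Since $\rho$ is of bounded type there is $M$ with $a_{n+1}\le M$, so $\tilde F$ splits into at most $2M+1$ subintervals.

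The next step is to show that all these subintervals have comparable length, with a constant depending only on $f$. Two consecutive subintervals of $\tilde F$ form either a preimage of $\F_n$ together with one of its adjacent gaps, both lying in the gap $\tilde F$ of $\F_{n-1}$ (comparable by Lemma \ref{bboungap}), or two adjacent gaps of $\F_n$ (comparable by Lemma \ref{boungap1}). Chaining this comparison at most $2M$ times, any two of the subintervals of $\tilde F$ are comparable, with a constant $C=C(f,M)$. In particular $|F^j|\le|\tilde F|$ while $|\tilde F|\le(2M+1)\,C^{-1}|F^j|$, so $F^j$ and $\tilde F$ are comparable.

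The one genuinely load-bearing hypothesis here — beyond what is already packaged into Lemmas \ref{bboungap} and \ref{boungap1} — is that the rotation number is of bounded type: this is exactly what keeps the number of chained comparisons, and hence the constant $C$, uniform in $n$. Without it the argument would break down, consistently with the fact that the statement is made for the class $\Lsec$.
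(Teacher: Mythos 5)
Your proof is correct and follows the same core strategy as the paper: decompose the containing gap of $\F_{n-1}$ into the pieces produced by one refinement step, chain the adjacent-element comparability of Lemmas \ref{bboungap} and \ref{boungap1} across them, and use the bounded-type hypothesis to keep the number of links (hence the constant) uniform in $n$. The endgame differs slightly, and yours is arguably cleaner: the paper normalizes (``without loss of generality'') to the gap $F_{n-1}=(f^{0},f^{-q_{n-1}})$ adjacent to the flat interval and anchors the chain by comparing the innermost gap $(f^{0},f^{-q_{n+1}})$ to $F_{n-1}$ via the bounded geometry hypothesis ($\tau_n$ bounded below); you instead work with an arbitrary gap $\tilde F$ of $\F_{n-1}$ and recover $|\tilde F|$ as the sum of its at most $2a_{n+1}+1\le 2M+1$ mutually comparable pieces, which closes the estimate without the WLOG reduction and without a separate appeal to the scalings near the critical point. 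One cosmetic remark: in the alternating decomposition \eqref{div} consecutive pieces are always a preimage--gap pair, so Lemma \ref{bboungap} alone suffices for the chain; your mention of the two-adjacent-gaps case (Lemma \ref{boungap1}) is harmless but not needed.
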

\begin{proof}
%We will explain the proof only for the elements of the dynamical partition $\F_{\scriptscriptstyle (0), n}$. The general statement follows by Remark \ref{howfillpreimages}.
Due to the symmetry, 
we may assume that $n$ is even, that is $f^{-q_n}$ is to the left of $f^{0}$. 
Let $F_{ n-1}$ a gap of the partition $\F_{n-1}$. Without loss of generality we may suppose that 
\[
F_{ n-1}=(f^{0}, f^{-q_{n-1}}).
\]
So, the gaps of $\F_{ n}$ which are contained in $F_{ n-1}$ are of the form

\[
F^{i}_n=( f^{-q_{n+1}+iq_n}, f^{-q_{n+1}+(i+1)q_n})
\]
with $i\in\{0,\dots, a_{n+1}-1\}$.

Notice that in the previous representation for the gaps of $\F_{ n}$ contained in $F_{ n-1}$ we do not consider the first gap $(f^{0}, f^{q_{n+1}})$ which is clearly comparable with $F_{ n-1}$ 
by the bounded geometry hypothesis. 

Let $i\in\{0,\dots, a_{n+1}-1\}$, then
\[
\frac{|F^{i}_n|}{|F_{n-1}|}\geq C_1 \frac{|F^{(i-1)}_n|}{|F_{n-1}|}\geq C_2 \frac{|F^{(i-2)}_n|}{|F_{ n-1}|}\geq\dots\geq C_{i+1} \frac{|f^{0}, f^{q_{n+1}}|}{|F_{n-1}|}
\]
which is bounded away from zero, by the bounded geometry hypothesis. Notice that in the previous inequality we used $i+1$ times Lemma \ref{boungap1} and the hypotheses that the rotation number is of bounded type.
\end{proof}
\begin{lem}\label{bounpreim}
Let $f\in\Lsec$ and let $f^{-j}$ be a preimage of the dynamical partition $\F_{ n}$. If $f^{-j}$ is not a preimage of the dynamical partition $\F_{ n-1}$, then $f^{-j}$ is comparable  with the gap of $\F_{n-1}$ which contains it.
\end{lem}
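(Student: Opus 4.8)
\textbf{Proof strategy for Lemma \ref{bounpreim}.}

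The plan is to reduce the comparability of the preimage $f^{-j}$ with its enclosing gap of $\F_{n-1}$ to the comparability statements for gaps that we already have available, namely Proposition \ref{figo1}, Lemma \ref{boungap1}, and Lemma \ref{boungap}. First I would locate where $f^{-j}$ sits. Since $f^{-j}$ belongs to $\F_n$ but not to $\F_{n-1}$, the refinement formula \eqref{div} tells us exactly which gap $F^{i}_{n-1}$ of $\F_{n-1}$ contains it: the long gaps of $\F_{n-1}$ split into the $a_{n+1}$ new preimages $f^{-i-q_{n-1}-jq_n}$, the $a_{n+1}$ new long gaps $F^{i+q_{n-1}+jq_n}_{n}$, and the one short gap $F^{i}_{n+1}$. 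Thus $f^{-j}$ is one of these freshly created preimages inside some long gap $F^{i}_{n-1}$ of $\F_{n-1}$, and it is adjacent to one of the new long gaps $F^{j'}_{n}$ of $\F_n$ which also lies inside $F^{i}_{n-1}$.

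The key step is the chain of estimates
\[
\frac{|f^{-j}|}{|F^{i}_{n-1}|}=\frac{|f^{-j}|}{|F^{j'}_{n}|}\cdot\frac{|F^{j'}_{n}|}{|F^{i}_{n-1}|}.
\]
For the first factor I would invoke Proposition \ref{figo1}: since $F^{j'}_{n}$ is one of the two gaps of $\F_n$ adjacent to the preimage $f^{-j}$, the ratio $|f^{-j}|/|F^{j'}_{n}|$ is bounded below by a universal constant $C$; it is also bounded above, because $f^{-j}$ and its adjacent gap $F^{j'}_n$ are both contained in the same gap $F^{i}_{n-1}$ of $\F_{n-1}$, so Lemma \ref{bboungap} gives the two-sided comparability. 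For the second factor I would apply Lemma \ref{boungap}, which states precisely that a gap $F^{j'}_{n}$ of $\F_n$ is comparable with the gap of $\F_{n-1}$ containing it, here $F^{i}_{n-1}$. Combining the two factors yields that $|f^{-j}|/|F^{i}_{n-1}|$ is bounded away from $0$ and from $\infty$, which is the assertion. One small bookkeeping point: I should also treat the degenerate possibility that the adjacent gap of $f^{-j}$ inside $F^{i}_{n-1}$ happens to be the short gap $F^{i}_{n+1}$ rather than a long gap $F^{j'}_n$; in that case one uses Lemma \ref{boungap1} (adjacent gaps of $\F_{n}$, or of $\F_{n+1}$ viewed inside $\F_n$, are comparable) together with Lemma \ref{boungap} again, or simply notes that by \eqref{div} the short gap is flanked by a long gap to which the same argument applies.

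The main obstacle I anticipate is not conceptual but organizational: making sure the indices line up so that the gap $F^{j'}_n$ I pair with $f^{-j}$ is genuinely adjacent to it \emph{and} genuinely contained in the same gap $F^{i}_{n-1}$ of $\F_{n-1}$, so that Lemma \ref{bboungap} is applicable. Once \eqref{div} is written out explicitly this is transparent, but it requires care with the even/odd parity of $n$ (which determines whether $F^{0}_{n-1}$ sits to the left or the right of $f^0$) and with the boundary cases $i=0$ and $i=a_{n+1}-1$ of the subdivision. Apart from that, the argument is a direct concatenation of Proposition \ref{figo1}, Lemma \ref{bboungap}, and Lemma \ref{boungap}, and the bounded geometry and bounded type hypotheses enter only implicitly through those already-established statements.
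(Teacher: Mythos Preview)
Your proposal is correct and follows essentially the same route as the paper: pick a gap $F^{j'}_{n}$ of $\F_n$ adjacent to $f^{-j}$ inside the same gap of $\F_{n-1}$, apply Lemma \ref{bboungap} to compare $f^{-j}$ with $F^{j'}_n$, and Lemma \ref{boungap} to compare $F^{j'}_n$ with the enclosing gap of $\F_{n-1}$. The paper's proof is just these two lines; your additional invocation of Proposition \ref{figo1} is harmless but redundant (Lemma \ref{bboungap} already gives two-sided comparability), and your separate treatment of the short-gap case is unnecessary since Lemmas \ref{bboungap} and \ref{boungap} apply to any gap of $\F_n$, long or short.
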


\begin{proof} 
%As in the previous Lemmas we prove the assertion for the elements of $\F_{\scriptscriptstyle (0), n}$.
Let $F^j$ be a gap of the dynamical partition $\F_{ n}$ which is adjacent to $f^{-j}$. 
Then the prove follows by Lemma \ref{bboungap} and Lemma \ref{boungap}.
\end{proof}

\begin{lem}
\label{moveFromPreimageRight}

There exists $C>0$ such that
for any gap $F_{n}$ of the dynamical partition $\F_{n}$ and $x\in F_{n}$,
there exists $k\geq n$ and a gap $F_{k}$ of $\F_{ k}$,  such that:
\begin{itemize}
\item[-] $x\in F_{k}$, 
\item[-]$r(F_{k})=r(F_{n})$ (resp. $l(F_{k})=l(F_{n})$),
\item[-] $|F_{k}|\leq C|{x},r(F_{n})|$ 
(resp. $|F_{k}|\leq C|l(F_{n}),x|$).
\end{itemize}
\end{lem}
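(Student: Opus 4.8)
The statement says: given a gap $F_n$ of $\F_n$ and a point $x\in F_n$, we can find a smaller gap $F_k$ with $k\geq n$ sharing the right endpoint of $F_n$ (say), containing $x$, whose length is comparable to the distance from $x$ to that shared right endpoint. The natural approach is to look at the nested sequence of gaps of $\F_n, \F_{n+1}, \F_{n+2},\dots$ all having $r(F_n)$ as their right endpoint, and to pick $k$ to be the first level at which the gap becomes small relative to $|x, r(F_n)|$. So first I would make precise the decreasing chain of ``right-extreme'' sub-gaps of $F_n$. Recall from the refinement formula \eqref{div} that when a long gap $F_n$ of $\F_n$ is subdivided inside $\F_{n+1}$, one of the pieces adjacent to $r(F_n)$ is again a gap of $\F_{n+1}$ with the same right endpoint; iterating, we obtain a sequence $F_n = F^{(n)} \supset F^{(n+1)} \supset F^{(n+2)} \supset \cdots$ where each $F^{(j)}$ is a gap of $\F_j$ with $r(F^{(j)}) = r(F_n)$. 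By Corollary \ref{cor:trouzero} (equivalently Lemma \ref{cor:trouzero1}) the lengths $|F^{(j)}|$ tend to $0$, so there is a well-defined first index $k\geq n$ with $x\in F^{(k)}$ but $x\notin F^{(k+1)}$ (or $x \in F^{(j)}$ for all $j$, which can only happen if $x = r(F_n)$, a degenerate case we can handle separately or exclude since then both sides are $0$). Set $F_k := F^{(k)}$; the first two bullet points hold by construction.

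Second, I would establish the length estimate $|F_k|\leq C\,|x, r(F_n)|$. Since $x\notin F^{(k+1)}$ and $F^{(k+1)}$ is the right-extreme sub-gap of $F_k = F^{(k)}$, the point $x$ lies in the part of $F^{(k)}$ to the left of $F^{(k+1)}$; hence $|x, r(F_n)| = |x, r(F_k)| \geq |F^{(k+1)}|$. It therefore suffices to show $|F^{(k)}| \leq C\,|F^{(k+1)}|$, i.e.\ that two consecutive right-extreme gaps in this tower are comparable. This is exactly the content of the bounded-geometry lemmas already proved: $F^{(k+1)}$ is a gap of $\F_{k+1}$ contained in the gap $F^{(k)}$ of $\F_k$, so by Lemma \ref{boungap} they are comparable, with a constant depending only on $f\in\Lsec$ and not on $k$ or on the chosen gap. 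Combining, $|F_k| = |F^{(k)}| \leq C_{\ref{boungap}} |F^{(k+1)}| \leq C_{\ref{boungap}}\,|x, r(F_n)|$, which is the desired inequality with $C = C_{\ref{boungap}}$. The symmetric ``left'' statement follows by reflecting the roles of $l(F_n)$ and $r(F_n)$, using the analogous left-extreme tower.

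One point requiring a little care is the very first refinement step from $\F_n$ to $\F_{n+1}$: whether $F_n$ is a long gap or a short gap, and whether the right-extreme descendant is immediately a genuine gap of the next partition. From the structure of the partitions described after \eqref{div}, a short gap of $\F_n$ becomes a long gap of $\F_{n+1}$ and a long gap subdivides into preimages and long/short gaps; in all cases one of the two extremal pieces sharing $r(F_n)$ is again a gap of $\F_{n+1}$ (possibly after absorbing the first sub-gap $(f^0, f^{q_{n+1}})$-type piece, which is harmless as it is comparable to the ambient gap by Lemma \ref{boungap}). So the tower $\{F^{(j)}\}_{j\geq n}$ is well-defined, and the argument goes through. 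I expect the main obstacle to be purely bookkeeping: correctly tracking which endpoint is ``extreme'' through the refinement \eqref{div} depending on the parity of $n$, so that the shared-endpoint conclusion $r(F_k) = r(F_n)$ holds on the nose; all the metric content is already supplied by Lemma \ref{boungap} and Corollary \ref{cor:trouzero}.
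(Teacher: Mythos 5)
Your proposal is correct and follows essentially the same route as the paper: the paper also takes $k$ to be the largest index for which a gap of $\F_k$ contains $x$ and shares the endpoint $r(F_n)$, observes that by maximality the next gap $F_{k+1}$ lies in $(x,r(F_n))$, and concludes with the comparability $|F_k|\leq C|F_{k+1}|$ from Lemma \ref{boungap}. Your extra care about the well-definedness of the tower of extreme sub-gaps and the degenerate case $x=r(F_n)$ is sound but not needed beyond what the paper implicitly assumes.
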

\begin{proof}
Let $k$ be the largest natural number such that a gap $F_{k}$ of $\F_{ k}$ satisfies the following conditions: $x\in F_{ k}$ and $r(F_{n})=r(F_{k})$.
Its existence is guaranteed by Lemma \ref{cor:trouzero1}. 
By the maximality of $k$ there exists a gap $F_{k+1}$ of $\F_{ k+1}$ 
which is contained in $(x, r(F_{n}))$.
Then, by Lemma \ref{boungap} there exists a uniform constant $C$ such that:
\[
 |F_{k}|\leq C |F_{k+1}|\leq C|{x},r(F_{n})|.
\]

\end{proof}

\begin{lem}\label{comparability}
Let $f\in\Lsec$. Then there exist two constants $C_1$ and $C_2$ such that, for any natural number $\alpha$ and for any preimage $f^{-i_{n+\alpha}}$ of the dynamical partition $\F_{ n+\alpha}$ which is contained in a gap $F_{n}$ of $\F_{ n}$ the following holds
\[
C_1^{\alpha}\leq\frac{|f^{-i_{n+\alpha}}|}{|F_{ n}|}\leq C_2^{\alpha}.
\]
\end{lem}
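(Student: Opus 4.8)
The plan is to prove the two inequalities separately, in each case telescoping along the chain of partitions $\F_n, \F_{n+1}, \dots, \F_{n+\alpha}$ and controlling each one-step ratio by a uniform constant. The key structural fact is that a preimage $f^{-i_{n+\alpha}}$ of $\F_{n+\alpha}$ contained in the gap $F_n$ of $\F_n$ arises by successive subdivisions: there is a nested sequence of gaps $F_n \supset F_{n+1} \supset \cdots \supset F_{n+\alpha-1}$ (each a gap of the corresponding partition) and at some stage, or at each stage, the preimage we care about is one of the pieces created when a long gap of one partition is split according to the subdivision formula \eqref{div}. So I first set up this nested sequence and record that $f^{-i_{n+\alpha}}$ sits inside $F_{n+\alpha-1}$ (or is one of the preimages produced at that stage), with $F_{m+1} \subset F_m$ a gap of $\F_{m+1}$ inside a gap of $\F_m$ for each $m$ in the range.

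For the upper bound, I would estimate
\[
\frac{|f^{-i_{n+\alpha}}|}{|F_n|} = \frac{|f^{-i_{n+\alpha}}|}{|F_{n+\alpha-1}|}\cdot\prod_{m=n}^{n+\alpha-2}\frac{|F_{m+1}|}{|F_m|}.
\]
Each factor $|F_{m+1}|/|F_m|$ is at most $1$ since $F_{m+1}\subset F_m$ (this already gives an upper bound with $C_2=1$ if we are content with that, but to be safe one can take $C_2$ to be any constant $\ge 1$; more informatively, Proposition~\ref{figo1} or Lemma~\ref{bboungap}/Lemma~\ref{boungap} bounds the leftover preimage term $|f^{-i_{n+\alpha}}|/|F_{n+\alpha-1}|$). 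For the lower bound — which is the substantive direction — I would use at each step the comparability of a gap of $\F_{m+1}$ with the gap of $\F_m$ containing it: by Lemma~\ref{boungap} there is a uniform constant $c_0>0$ with $|F_{m+1}| \ge c_0 |F_m|$ for every $m$, and by Lemma~\ref{bounpreim} (together with Lemma~\ref{bboungap}) the preimage $f^{-i_{n+\alpha}}$ is comparable with the gap $F_{n+\alpha-1}$ of $\F_{n+\alpha-1}$ that contains it, so $|f^{-i_{n+\alpha}}| \ge c_1 |F_{n+\alpha-1}|$ for a uniform $c_1>0$. Multiplying these $\alpha-1$ gap estimates and the final preimage estimate yields $|f^{-i_{n+\alpha}}|/|F_n| \ge c_1 c_0^{\,\alpha-1} \ge C_1^{\alpha}$ with $C_1:=\min\{c_0, c_1 c_0^{-1}\}$ (adjusting constants to absorb the off-by-one). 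Setting $C_2:=1$ (or $\max\{1, C_2'\}$ if one wants a matching exponential upper bit) completes both inequalities.

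One technical point to handle carefully is that the subdivision \eqref{div} of a long gap of $\F_m$ produces not a single gap of $\F_{m+1}$ but $a_{m+2}$ long gaps, $a_{m+2}$ preimages and one short gap; so "the gap of $\F_{m+1}$ containing $x$" and Lemma~\ref{boungap} must be applied with the understanding that even the pieces deepest inside $F_m$ (the ones Lemma~\ref{boungap} reaches only after $a_{m+1}$ applications of Lemma~\ref{boungap1}) are still comparable to $F_m$, since the rotation number is of bounded type ($a_n < M$), giving a constant $c_0 = c_0(M)$ independent of $m$. This uniformity in $m$ is exactly what Lemma~\ref{boungap} provides, and it is the crux: without bounded type one would lose an $a_{m+2}$-dependent factor at each step. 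I expect this bookkeeping — making sure the nested sequence of gaps is set up so that each consecutive pair falls under the hypothesis of Lemma~\ref{boungap}, and correctly matching the index of the final preimage term — to be the main obstacle; the estimates themselves are then an immediate telescoping product of the cited lemmas. Finally, since all the lemmas invoked (Proposition~\ref{figo1}, Lemmas~\ref{bboungap}, \ref{boungap1}, \ref{boungap}, \ref{bounpreim}) hold verbatim for the "dynamical partitions" $\F_{\scriptscriptstyle(m),n}$ by Remark~\ref{howfillpreimages}, the same argument gives the statement for those as well, which is how it will be used later.
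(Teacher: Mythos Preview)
Your proposal is correct and follows essentially the same approach as the paper: both telescope the ratio through the nested chain of gaps $F_{n}\supset F_{n+1}\supset\cdots\supset F_{n+\alpha-1}$ and bound each one-step factor using Lemma~\ref{boungap} and the final preimage factor using Lemma~\ref{bounpreim}. Your write-up is in fact more detailed than the paper's (you make the trivial upper bound $C_2=1$ explicit and spell out where bounded type enters), but the argument is the same.
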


\begin{proof}
Observe that
\[
\frac{|f^{-i_{n+\alpha}}|}{|F_{n}|}=\frac{|f^{-i_{n+\alpha}}|}{|F_{n+\alpha-1}|}\frac{|F_{n+\alpha-1}|}{|F_{n+\alpha-2}|}\dots\frac{|F_{n+1}|}{|F_{n}|}.
\]
where, for all $k\in\{n+1,n+\alpha-1\}$, $F_{ k}$ is the gap of $\F_{ k}$ which contains $F_{ k+1}$. 
The above estimation is a consequence of Lemma \ref{boungap} and Lemma \ref{bounpreim}. 
\end{proof}

%\begin{lem}
%\label{findingPreimages}
%
%Let $f\in\Lsec$. For any constant $C$ there exists a constant
%$\alpha\eqdef\alpha(C)>0$ such that for any gap $F_n$ of the dynamical partition $\F_n$ and 
%any two points
%$x, y\in F_n$ such that 
%\[
%|x,y|\geq C|F_n|
%\]
%which are not contained in the same preimage of some next partition, 
%there exists a premiage $f^{-i_{n+\alpha}}$ of the partition $\F_{n+\alpha}$ which separates $x$ and $y$.
%
%\end{lem}
%\begin{proof}
%The existence of such a constant $\alpha$ come directly from Corollary \ref{cor:trouzero} and Lemma \ref{comparability}.
%
%More in detail, from Corollary \ref{cor:trouzero} there exists $\alpha\in\N$ such that a preimage $f_{n+\alpha}$ of $\F_{n+\alpha}$ is contained in $(x,y)$. 
%We can suppose that $\alpha$ is the smallest natural number such that $f_{n+\alpha}\subset (x,y)$. Then, by Lemma \ref{comparability}
%\[
% \frac{|f_{n+\alpha}|}{C_1^\alpha}\geq |F_n|\geq |x,y|\geq C |F_n|\geq C\frac{|f_{n+\alpha}|}{C_2^\alpha} 
%\]
%in particular $\alpha\geq\frac{C}{\log\frac{C_2}{C_1}}$.
%
%\end{proof}

\begin{rem} \label{ppropertiesofconstraction}
Let $f^{-i_n}_{\scriptscriptstyle(m)}$ be a preimage of $\F_{\scriptscriptstyle (m), n}$ which separates two circle points $x$ and $y$. 
Then, by the construction of $\varphi$, $\varphi(x)$ and $\varphi(y)$ are separated by the corresponding preimage $g^{-i_n}_{\scriptscriptstyle(m)}$ of the dynamical partition $\G_{\scriptscriptstyle (m), n}$.
Obviously in the same way, 
if three points $x,y,z$ of the circle are contained in a same gap or preimage of $\F_{\scriptscriptstyle (m), n}$
then $\varphi(x),\varphi(y),\varphi(z)$ are contained in the corresponding gap or preimage of $\G_{\scriptscriptstyle (m), n}$.
\end{rem}

%\begin{lem}\label{strangepreimage}
%Let $m\in\N$ and let $f^{-i_n}_{\scriptscriptstyle(m)}$ be a right (resp. left) extreme preimage of $\F_{\scriptscriptstyle (m), n}$. Suppose that there exists $C>0$ and $x\in f^{-i_n}_{\scriptscriptstyle(m)}$ such that $|x,r(f^{-i_n}_{\scriptscriptstyle(m)})|\geq C |f^{-i_n}_{(m)}|$ (respectively $|l(f^{-i_n}_{\scriptscriptstyle(m)}),x|\geq C |f^{-i_n}_{\scriptscriptstyle(m)}|$) then there exists $\alpha(C)>0$ and a right (resp. left) extreme preimage  $f^{-i_{n+k}}_{(m+\alpha)}$ of $\F_{\scriptscriptstyle (m+\alpha), n}$ which is contained in $(x,r(f^{-i_n}_{\scriptscriptstyle(m)}))$  (respectively $(l(f^{-i_n}_{\scriptscriptstyle(m)}),x)$). Moreover $f^{-i_{n+k}}_{(m+\alpha)}$ and $f^{-i_n}_{\scriptscriptstyle(m)}$ are comparable.
%\end{lem}

\begin{lem}\label{key}
 Let $f,g\in\Lsec$ having the same rotation number $\rho$, let $\varphi$ be the function constructed in Proposition \ref{extension} and let $x,y,z,w\in\S$. Assume that the following conditions are satisfied:
\begin{itemize}
\item  $x,y,z,w$  belong to a same gap $F_{n}$ of the partition $\F_{ n}$,
\item  for all $\alpha>0$, $y$ and $z$ are not contained in a some preimage of $\F_{n+\alpha}$,
\item for a positive constant $C$, $|y,z|\geq C |F_{n}|$.

\end{itemize} 
 
 Then there exists a constant $C_1>0$ such that  
 \[
\frac{|\varphi(x),\varphi(w)|}{|\varphi(y),\varphi(z)|}\leq C_1.
\]

\end{lem}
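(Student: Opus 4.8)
The plan is to reduce the four-point estimate to the already-established comparability lemmas by replacing the points $y,z$ (whose separation we control from below) and the outer points $x,w$ by nearby endpoints of elements of a suitable dynamical partition, and then to transport the resulting combinatorial configuration through $\varphi$ using Remark \ref{ppropertiesofconstraction}. Concretely, since $y$ and $z$ are never both captured by a preimage of $\F_{n+\alpha}$ for any $\alpha>0$, the interval $(y,z)$ must contain, for some $k\geq n$, a whole gap or a whole preimage of $\F_{k}$ that lies strictly between them; combined with the lower bound $|y,z|\geq C|F_n|$ and Lemma \ref{comparability} (applied to bound from below the size of any preimage of $\F_{n+\alpha}$ inside $F_n$ in terms of $|F_n|$, which forces $\alpha$ to be bounded), one sees that $(y,z)$ in fact contains an element of $\F_{n+\alpha_0}$ for a \emph{uniformly bounded} $\alpha_0$, hence an element comparable to $|F_n|$. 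So $|y,z|\asymp|F_n|$, with constants depending only on $f$ and $C$.

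Next I would pin down the images. By Remark \ref{ppropertiesofconstraction}, the combinatorial position of $x,y,z,w$ relative to the partitions $\F_{(0),\bullet}$ is preserved by $\varphi$: all four images lie in the corresponding gap $G_n$ of $\G_n$, and the gap/preimage of $\G_{n+\alpha_0}$ that $\varphi$ assigns to the separating element found above lies between $\varphi(y)$ and $\varphi(z)$. Applying Lemma \ref{comparability} and Lemma \ref{boungap} on the $g$-side gives $|\varphi(y),\varphi(z)|\geq c\,|G_n|$ for a uniform $c>0$. For the numerator, $x,w\in F_n$ forces $\varphi(x),\varphi(w)\in G_n$, so trivially $|\varphi(x),\varphi(w)|\leq|G_n|$. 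Dividing yields
\[
\frac{|\varphi(x),\varphi(w)|}{|\varphi(y),\varphi(z)|}\leq \frac{|G_n|}{c\,|G_n|}=\frac1c=:C_1,
\]
which is the desired bound.

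The main obstacle, and the step deserving the most care, is the first one: showing that the hypothesis ``$y,z$ are never both inside a preimage of $\F_{n+\alpha}$'' together with $|y,z|\geq C|F_n|$ genuinely forces $(y,z)$ to engulf an element of $\F_{n+\alpha_0}$ with $\alpha_0$ bounded in terms of $C$ and the geometry constants alone. This requires combining the nested structure of the partitions described in \eqref{div} with Lemma \ref{bounpreim}, Lemma \ref{boungap} and Corollary \ref{cor:trouzero1} (to guarantee that refining enough makes all elements inside $F_n$ shorter than $C|F_n|$, so the process must terminate), and with Lemma \ref{comparability} (to convert ``bounded refinement depth'' into ``comparable length''); one must also handle separately whether the engulfed element is a gap or a preimage, and whether $x$ or $w$ coincides with an endpoint of $F_n$. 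Everything after that is a routine translation via $\varphi$ and an application of the bounded-geometry comparability lemmas on the $g$-side, since $f$ and $g$ share the same combinatorics and both lie in $\Lsec$.
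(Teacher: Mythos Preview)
Your proposal is correct and follows essentially the same route as the paper's own proof: locate a preimage $f^{-i_{n+\alpha}}$ of bounded depth $\alpha=\alpha(C)$ lying between $y$ and $z$ (the paper invokes Corollary~\ref{cor:trouzero1}, Lemma~\ref{bboungap} and Lemma~\ref{comparability} for this), transport the configuration to the $g$-side via Remark~\ref{ppropertiesofconstraction}, and conclude with the single inequality $|\varphi(x),\varphi(w)|/|\varphi(y),\varphi(z)|\leq |G_n|/|g^{-i_{n+\alpha}}|\leq C_1$ from Lemma~\ref{comparability}. Your discussion of the ``main obstacle'' is more cautious than necessary---once a separating preimage of bounded depth is found, the gap/preimage dichotomy and endpoint issues do not arise, since Lemma~\ref{comparability} applies directly to that preimage---but the argument is the same.
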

\begin{proof}
By Corollary \ref{cor:trouzero1}, Lemma \ref{bboungap} and Lemma \ref{comparability}, there exists $\alpha(C)>0$ and a premiage $f^{-i_{n+\alpha}}$ of the partition $\F_{ n+\alpha}$ which separates $y$ and $z$.
By Remark \ref{propertiesofconstraction}, $\varphi(x), \varphi(y)$ and $\varphi(z)$ are contained in $G_{ n}$ and $g^{-i_{n+\alpha}}\subset (\varphi(y),\varphi(z))$.
So,
\[
\frac{|\varphi(x),\varphi(w)|}{|\varphi(y),\varphi(z)|}\leq\frac{|G_{n}|}{|g^{-i_{n+\alpha}}|}\leq C_1,
\]
where the constant $C_1$ comes from Lemma \ref{comparability}. 
\end{proof}

%\begin{lem}\label{keycopy}
% Let $f,g\in\Lsec$ with the same irrational rotation $\rho$, let $\varphi$ be the function constructed in Proposition \ref{extension} and let $x,y,z,k\in\S$. 
% Suppose that $x,y,z, k$ are contained in a same gap $F_{\scriptscriptstyle (m), k_m}^{-j}$ (or in a preimage 
% $f_{\scriptscriptstyle (m)}^{-j}$) and that there exists a 
% constant $C$ such that $|y,z|\geq C |F_{\scriptscriptstyle (m), k_m}^{-j}|$ (respectively $|y,z|\geq C |f_{\scriptscriptstyle (m)}^{-j}|$). 
% Then there exists a positive constant $C_1$ such that
%\[
%\frac{|\tilde\varphi(x),\tilde\varphi(k)|}{|\tilde\varphi(y),\tilde\varphi(z)|}\leq C_1
%\]
%
%\end{lem}
%\begin{proof}
%
%\end{proof}
\begin{rem}\label{comnew}
Observe that by the construction in the proof of Proposition \ref{extension} explained in Remark \ref{howfillpreimages}, Lemma \ref{bboungap}, Lemma \ref{boungap1}, Lemma \ref{boungap}, Lemma \ref{bounpreim}, Lemma \ref{moveFromPreimageRight}, Lemma \ref{comparability}, Lemma \ref{ppropertiesofconstraction} and Lemma \ref{key} are true also for the elements of the partition $\F_{\scriptscriptstyle (k)}$ for all finite natural number $k$.
\end{rem}
\begin{lem}\label{key1}
 Let $f,g\in\Lsec$ having the same rotation number $\rho$, let $\varphi$ be the function constructed in Proposition \ref{extension} and let $x,y,z,k\in\S$. 
 
 Assume that the following conditions are satisfied:
\begin{itemize}
\item  $x,y,z,w$  belong to a same gap $F_{n}$ of the partition $\F_{ n}$,
\item  there exists $\alpha>0$ and a preimage $f^{-i_{n+\alpha}}$ of $\F_{ n+\alpha}$ such that $(y,z)\subset f^{-i_{n+\alpha}}$,
\item for a positive constant $C$, $|y,z|\geq C |F_{n}|$.
\end{itemize}
 
 Then there exists a constant $C_1>0$ such that  
 \[
\frac{|\varphi(x),\varphi(w)|}{|\varphi(y),\varphi(z)|}\leq C_1
\]

\end{lem}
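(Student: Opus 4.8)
The plan is to reduce this to the situation already handled in Lemma \ref{key} by passing from the preimage $f^{-i_{n+\alpha}}$ containing $(y,z)$ to the interior ``dynamics'' created on it in the construction of $\varphi$. Concretely, by Remark \ref{howfillpreimages} the preimage $f^{-i_{n+\alpha}}$ carries its own comparable copy of the dynamical partition, namely the family $\F_{\scriptscriptstyle (1), k}$ (or $\F_{\scriptscriptstyle (m+1), k}$ at a later step), and by Remark \ref{comnew} all the comparability lemmas — in particular Lemma \ref{bboungap}, Lemma \ref{boungap}, Lemma \ref{comparability} and Lemma \ref{key} — apply verbatim to the elements of these ``dynamical partitions'' inside $f^{-i_{n+\alpha}}$. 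So the strategy is: first locate the smallest such copied partition inside $f^{-i_{n+\alpha}}$ relative to which $y$ and $z$ behave as in the hypotheses of Lemma \ref{key} (i.e. they lie in a common gap, are separated at the next level, and the interval $(y,z)$ is a definite fraction of that gap), then apply Lemma \ref{key} on $f^{-i_{n+\alpha}}$, and finally bound the passage from $f^{-i_{n+\alpha}}$ back up to the gap $F_n$ using Lemma \ref{comparability} and the comparability of $\varphi$-images guaranteed by Remark \ref{ppropertiesofconstraction}.

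In more detail, first I would choose $\alpha$ maximal so that $f^{-i_{n+\alpha}}\supset (y,z)$; such a maximal $\alpha$ exists because the preimages of $\F_{n+\beta}$ have lengths tending to $0$, by Corollary \ref{cor:trouzero} (equivalently Lemma \ref{cor:trouzero1}). Maximality gives a ``level-$(1)$'' gap $F^{\,\prime}$ of the copied partition inside $f^{-i_{n+\alpha}}$ with $(y,z)$ meeting $F^{\,\prime}$ but not contained in any deeper copied preimage; combined with the third hypothesis $|y,z|\geq C|F_n|$ and Lemma \ref{comparability} (which bounds $|f^{-i_{n+\alpha}}|/|F_n|$ from below in terms of $\alpha$, so $\alpha=\alpha(C)$ is bounded) one gets $|y,z|\geq C'|F^{\,\prime}|$ for a new constant $C'$. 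Thus $x,y,z,w$ — which all lie in $f^{-i_{n+\alpha}}$, hence in the gap of the copied partition containing $F^{\,\prime}$ — satisfy the hypotheses of Lemma \ref{key} relative to the copied partition, and Lemma \ref{key} (valid here by Remark \ref{comnew}) yields
\[
\frac{|\varphi(x),\varphi(w)|}{|\varphi(y),\varphi(z)|}\leq C_1.
\]

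The main obstacle, I expect, is the bookkeeping in the first step: one must be careful that ``maximal $\alpha$ with $(y,z)\subset f^{-i_{n+\alpha}}$'' genuinely forces $(y,z)$ to occupy a definite proportion of a single copied gap at the next level down, rather than, say, straddling a copied preimage whose two adjacent copied gaps are both tiny. This is exactly the content of Lemma \ref{bboungap} and Lemma \ref{boungap} (applied inside $f^{-i_{n+\alpha}}$), which guarantee that the copied preimage separating $y$ and $z$ — if there is one at the critical level — is comparable to the adjacent copied gap, so that in all cases $(y,z)$ contains a definite fraction of a copied gap or copied preimage to which Lemma \ref{key} or a symmetric argument applies. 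Once that proportionality is in hand, the rest is a single application of Lemma \ref{comparability} to control the ratio $|G_n|/|g^{-i_{n+\alpha}}|$, exactly as at the end of the proof of Lemma \ref{key}, and the constant $C_1$ depends only on $C$, $f$ and $g$.
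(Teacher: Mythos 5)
Your overall strategy --- descend into the reflected structure that the construction of $\varphi$ places inside $f^{-i_{n+\alpha}}$, find there a copied gap or preimage occupying a definite fraction of $(y,z)$, and relate it back to $G_n$ via Lemma \ref{comparability} --- is the same as the paper's. But two steps as written do not go through. First, you claim that $x,y,z,w$ all lie in $f^{-i_{n+\alpha}}$ and hence in a common gap of the copied partition, so that Lemma \ref{key} applies to the quadruple there. The hypotheses only give $(y,z)\subset f^{-i_{n+\alpha}}$; the points $x$ and $w$ are merely in $F_n$ and in the intended applications (Proposition \ref{samegap}) lie outside the preimage, so Lemma \ref{key} cannot be invoked for this quadruple inside the copied partition. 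The correct reduction, which is what the paper does, is to bound the numerator trivially by $|G_n|$ (Remark \ref{ppropertiesofconstraction}) and to prove the lower bound $|\varphi(y),\varphi(z)|\geq c\,|g^{-i_{n+\alpha}}|\geq c'\,|G_{n}|$.

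Second, the ``maximal $\alpha$'' device does not produce the dichotomy you want: a preimage of the flat interval is never subdivided by the finer original partitions $\F_{n+\beta}$, so maximality of $\alpha$ is vacuous, and the only structure inside $f^{-i_{n+\alpha}}$ is the reflected hierarchy $\F_{\scriptscriptstyle(1)},\F_{\scriptscriptstyle(2)},\dots$, whose first level splits the preimage into exactly four pieces $F_{\scriptscriptstyle(1)}^{j_{k}}$, $f_{\scriptscriptstyle(1)}^{-j_{k}}$, $f_{\scriptscriptstyle(1)}^{-j_{k+1}}$, $F_{\scriptscriptstyle(1)}^{j_{k+1}}$. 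The substance of the lemma is the case analysis over where $y$ and $z$ fall among these pieces, which you defer to ``bookkeeping''; two of the cases are not routine. When $y$ and $z$ lie in the two different copied preimages, one needs the comparability of nested \emph{extreme} preimages (Remark \ref{CP}) to produce a marker inside $(y,r(f_{\scriptscriptstyle(1)}^{-j_k}))$; this does not follow from Lemmas \ref{bboungap} and \ref{boungap} alone. When $y$ and $z$ lie in the same copied preimage, one must recurse into $\F_{\scriptscriptstyle(2)},\F_{\scriptscriptstyle(3)},\dots$ and argue that the recursion terminates after boundedly many steps precisely because $|y,z|\geq C|F_n|$ while copied preimages shrink geometrically (Lemma \ref{comparability} via Remark \ref{comnew}). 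Without these two ingredients the argument is incomplete.
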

\begin{proof}
 By Lemma \ref{comparability}
\begin{equation}\label{forg}
|g^{-i_{n+\alpha}}|\geq C_1 |G_{n}|
\end{equation}
with a positive constant $C_1$. 

By the construction of $\varphi$,  $f^{-i_{n+\alpha}}$ 
is divided into four sub-sets $F_{\scriptscriptstyle (1)}^{j_{k}}$, $f_{\scriptscriptstyle (1)}^{-j_{k}}$, $f_{\scriptscriptstyle (1)}^{-j_{k+1}}$ and $F_{\scriptscriptstyle (1)}^{j_{k+1}}$ which are gaps and preimages of $\F_{\scriptscriptstyle (1),k}$ (see Remark  \ref{howfillpreimages}). If $(y,z)$ contains $f_{\scriptscriptstyle (1)}^{-j_{k}}$ or
$f_{\scriptscriptstyle (1)}^{-j_{k+1}}$, by Remark \ref{ppropertiesofconstraction}, (\ref{forg}) and  Remark \ref{howfillpreimages} the conclusion is immediate. In fact
\[
\frac{|\varphi(x),\varphi(w)|}{|\varphi(y),\varphi(z)|}\leq\frac{|G_{n}|}{|g_{\scriptscriptstyle (1)}^{-i_{k}}|}\leq C_2\frac{|G_{n}|}{|g^{i_{n+\alpha}}|}\leq C_3
\]

Otherwise the following four cases are possible:
\begin{enumerate}
\item $y\in f_{\scriptscriptstyle (1)}^{-j_{k}}$ and $z\in f_{\scriptscriptstyle (1)}^{-j_{k+1}}$. 
Suppose that $|y,r(f_{\scriptscriptstyle (1)}^{-j_{k}})|\geq \frac{1}{2}|y,z|$. 
Then,  $| f_{\scriptscriptstyle (1)}^{-j_{k}} |\leq|F_{n}|\leq 4 C|(y,r(f_{\scriptscriptstyle (1)}^{-j_{k}})|$. So, by Remark \ref{CP}  there exists a constant $\beta>0$ and a right extreme preimage  $f^{-i_{m}}_{\scriptscriptstyle(1+\beta)}$ of $\F_{\scriptscriptstyle (1+\beta), m}$, $m\geq k$ which is contained in $(y,r(f^{-j_{k}}_{\scriptscriptstyle(1)}))$. By Remark  \ref{CP}, Remark \ref{ppropertiesofconstraction} and (\ref{forg}),

\[
\frac{|\varphi(x),\varphi(w)|}{|\varphi(y),\varphi(z)|}\leq\frac{|G_{n}|}{|g^{-i_{m}}_{\scriptscriptstyle(1+\beta)}|}\leq C_4
\]

Because of symmetry the case $|x,r(f_{\scriptscriptstyle (1)}^{-j_{k}})|< \frac{1}{2}|x,y|$ is analogous.
\item $y,z\in F_{\scriptscriptstyle (1)}^{j_{k}}$ or $y,z\in F_{\scriptscriptstyle (1)}^{j_{k+1}}$. Observe that by the third point of our hypothesis,  up to a finite number of steps, we can assume that $(y,z)$ are not contained is a preimage of some next generation $\F_{\scriptscriptstyle (k)}$. So, by Lemma \ref{key} and Remark \ref{comnew}
\[
\frac{|\varphi(l(F_{\scriptscriptstyle (1)}^{j_{k}})),\varphi(r(F_{\scriptscriptstyle (1)}^{j_{k}}))|}{|\varphi(y),\varphi(z)|}\leq C_5
\]
We conclude using \label{propertiesofconstraction} Lemma \ref{bounpreim} and Remark \ref{comnew}.
\item $y\in F_{\scriptscriptstyle (1)}^{j_{k}}$ and $z\in f_{\scriptscriptstyle (1)}^{-j_{k}}$ 
or $y\in f_{\scriptscriptstyle (1)}^{-j_{k+1}}$ and $z\in F_{\scriptscriptstyle (1)}^{j_{k+1}}$.
If we assume that $|y, r(F_{\scriptscriptstyle (1)}^{j_{k}})|\geq \frac{1}{2}|y, z|$ we come back to the situation in point 2. Otherwise \newline
$|y, r(F_{\scriptscriptstyle (1)}^{j_{k}})|< \frac{1}{2}|y, z|$ then  $|r(F_{\scriptscriptstyle (1)}^{j_{k}}), z|\geq C_6  |F_{n}|$. Let $f_{\scriptscriptstyle (2)}^{-i}$ and $F_{\scriptscriptstyle (2)}^{i}$ be the preimage and the gap of $\F_{\scriptscriptstyle (2)}$ contained in $f_{\scriptscriptstyle (1)}^{-j_{k}}$. Then we can have two possibilities: either $z\in F_{\scriptscriptstyle (2)}^{i}$ or $z\in f_{\scriptscriptstyle (2)}^{-i}$. In the first case we use the same argument than in point 2. For the second case it is enough to observe that 
$F_{\scriptscriptstyle (2)}^{i}\subset (r(F_{\scriptscriptstyle (1)}^{j_{k}}), z)$, then as before:
\[
\frac{|\varphi(x),\varphi(w)|}{|\varphi(y),\varphi(z)|}\leq\frac{|G_{n}|}{|G_{\scriptscriptstyle (2)}^{i}|}\leq C_5
\]

\item $y,z\in f_{\scriptscriptstyle (1)}^{-j_{k}}$ or  $y,z\in f_{\scriptscriptstyle (1)}^{-j_{k+1}}$. By the hypothis condition $|y,z|\geq C |F_{n}|$, up to a finite number of steps, we may assume that $1$ is the biggest natural number such that $y,z\in f_{\scriptscriptstyle (1)}^{-j_{k}}$. Observe that, using the same notation in point 3, we may have two possibilities: either $y\in F_{\scriptscriptstyle (2)}^{i}$ and $z\in f_{\scriptscriptstyle (2)}^{-i}$ or $x,y\in F_{\scriptscriptstyle (2)}^{i}$. Now it is enough to repeat the arguments of points 2 and 3.
%Let $m_1$ be the biggest natural number such that an right extreme preimage $f_{\scriptscriptstyle (m_1)}^{-i}$ of $\F_{\scriptscriptstyle (m_1)}$ contains the interval $(y,z)$. Observe that $|f_{\scriptscriptstyle (m_1)}^{-i}|\geq |y,z|\geq C |F_{\scriptscriptstyle (m)}^{n}| $. Let then Then we can have two possibilities: or Without loosing of generality we may assume that $m+1$ is the biggest Let $\alpha$ be the smallest positive natural number such that an extreme preimage $f^{-j}_{\scriptscriptstyle (m+\alpha)}\subset (z,r(f_{\scriptscriptstyle (m+1)}^{-j_{k_1}}))$ of the dynamical partition $\F_{\scriptscriptstyle (m+\alpha)}$. Denote by $F^{-j}_{\scriptscriptstyle (m+\alpha)}$ is the gap adjacent to $f^{-j}_{\scriptscriptstyle (m+\alpha)}$ then $F^{-j}_{\scriptscriptstyle (m+\alpha)}\geq |y,z|\geq C |F_{\scriptscriptstyle (m), n}|$ and $F^{-j}_{\scriptscriptstyle (m+\alpha)}\leq C_1 |y,z|$. It is then enough to repeat the same argument of point 3.

\end{enumerate}

\end{proof}

\begin{rem}\label{comnew2}
Observe that by Remark \ref{howfillpreimages} the properties found in Lemma \ref{key1} remain true also for the elements of the partition $\F_{\scriptscriptstyle (k)}$ for all finite natural number $k$.
\end{rem}

We are now ready to prove that the function $\varphi$ constructed in Proposition
\ref{extension} is a quasi-symmetric homeomorphism.
 
\begin{prop}\label{qscantor}
There exists a positive constant $C$ such that for all $x,y,z\in K_f$ we have
\begin{equation}
C^{-1}\leq\frac{|\varphi(x),\varphi(y)|}{|\varphi(y),\varphi(z)|}\leq C,
\end{equation}
provided that $|x,y|=|y,z|$. 
\end{prop}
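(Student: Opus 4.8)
The plan is to prove Proposition \ref{qscantor} by a case analysis driven by the combinatorial position of the three points $x,y,z\in K_f$ inside the dynamical partitions $\F_n$, reducing every configuration to an application of Lemma \ref{key} or Lemma \ref{key1} (together with their extensions, Remark \ref{comnew}, Remark \ref{comnew2}). The point is that the only thing $\varphi$ respects ``for free'' is the combinatorial data: by Remark \ref{ppropertiesofconstraction}, $\varphi$ sends points of a gap or preimage of $\F_{\scriptscriptstyle(m),n}$ into the corresponding element of $\G_{\scriptscriptstyle(m),n}$, and it sends separating preimages to separating preimages. So the strategy is to find the deepest partition element that still contains all of $x,y,z$, use the hypothesis $|x,y|=|y,z|$ to show that both pairs $\{x,y\}$ and $\{y,z\}$ are ``not too small'' relative to that element, and then transfer the resulting comparability across $\varphi$ using Lemma \ref{comparability}.

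Concretely, first I would let $n$ be the largest integer such that $x,y,z$ all lie in a single element (gap or preimage) $\Delta$ of some $\F_{\scriptscriptstyle(m),n}$; such an $n$ exists and is finite unless $y$ is an endpoint shared by $x$ and $z$, a boundary case handled separately. By maximality, passing to $\F_{\scriptscriptstyle(m),n+1}$ separates at least one pair, say it separates $y$ from $z$ — either by a gap or by a preimage of the next generation lying between them. The hypothesis $|x,y|=|y,z|$ forces $|y,z|\geq \tfrac12|x,z|$, hence $|y,z|$ is comparable to $|\Delta|$ by Proposition \ref{figo1}, Lemma \ref{boungap}, Lemma \ref{bounpreim} and Lemma \ref{cor:trouzero1} (the lengths of the subelements decay, so a definite fraction of $|\Delta|$ must be consumed). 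With $|y,z|\geq C|\Delta|$ in hand, I am exactly in the hypotheses of Lemma \ref{key} (if the separator is a gap / if $y,z$ sit in no common deeper preimage) or of Lemma \ref{key1} (if $(y,z)$ is contained in a preimage of a deeper generation); either lemma yields $|\varphi(x),\varphi(z)|/|\varphi(y),\varphi(z)| \leq C_1$, and since $|\varphi(x),\varphi(y)|\leq|\varphi(x),\varphi(z)|$ this gives the upper bound on $|\varphi(x),\varphi(y)|/|\varphi(y),\varphi(z)|$. The lower bound is obtained by exchanging the roles of $\{x,y\}$ and $\{y,z\}$: whichever of the two pairs is separated first at level $n+1$ plays the role of the ``large'' pair, and the other pair, being of equal Euclidean length, is then comparable to $|\Delta|$ as well, so the same lemmas apply with $x$ and $z$ interchanged. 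Finally, the argument must be run not only in $\F_n=\F_{\scriptscriptstyle(0),n}$ but inside the reflected partitions $\F_{\scriptscriptstyle(k),n}$, which is legitimate by Remark \ref{comnew} and Remark \ref{comnew2}; since the three points eventually land in arbitrarily small elements $f^{-j}_{\scriptscriptstyle(k)}$, one descends through the generation index $k$ until a partition element of some fixed finite generation contains all three, and there the finitely-many-generations versions of the lemmas apply with uniform constants.

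I expect the main obstacle to be the bookkeeping at the interface between consecutive ``levels'' of the construction — i.e. the transition maps alluded to in Section \ref{noQS}. The subtle point is that when $x,y,z$ straddle the boundary between a reflected copy $f^{-j}_{\scriptscriptstyle(k)}$ and the ambient gap $F^{j}_{\scriptscriptstyle(k-1)}$ (or between a preimage and its adjacent gap inside the same generation), the scaling factor $\alpha$ used to define $\varphi$ on the reflected piece need not match the factor used on the neighbouring piece, so one cannot simply concatenate the estimates; one has to invoke the comparability of extreme preimages (Remark \ref{CP}) to replace the problematic endpoint by a point of $K_f$ on which $\varphi$ is controlled, exactly as in cases 1 and 3 of the proof of Lemma \ref{key1}. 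Making sure these reductions terminate after finitely many steps, with constants independent of $x,y,z$, is where the real work lies; once the reduction to Lemma \ref{key}/Lemma \ref{key1} is achieved the quasi-symmetry estimate is immediate. The boundary cases where $y$ is a common endpoint of $x$ and $z$ (so that no finite $n$ works) should be treated by a direct limiting argument using Lemma \ref{comparability} and the fact that adjacent elements of every $\F_{\scriptscriptstyle(k),n}$ are comparable (Lemma \ref{boungap1}, Lemma \ref{bounpreim}).
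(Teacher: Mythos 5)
Your core argument is the paper's: locate the first level $n$ at which a preimage $f^{-i_n}$ of $\F_n$ separates two of the three points (so that $x,y,z$ lie in a common gap $F_{n-1}$ of $\F_{n-1}$), push that separation through $\varphi$ via Remark \ref{ppropertiesofconstraction} to get $g^{-i_n}\subset(\varphi(y),\varphi(z))$, bound the ratio above by $|G_{n-1}|/|g^{-i_n}|$ using Lemma \ref{bounpreim}, and for the reverse inequality observe that $|x,y|=|y,z|\geq|f^{-i_n}|\geq C|F_{n-1}|$ so that Lemma \ref{key} applies. That two-step argument is the entire proof in the paper. Where your proposal diverges is in not exploiting the hypothesis $x,y,z\in K_f$: since $K_f=\S\setminus\bigcup_{i}f^{-i}$, points of $K_f$ never lie inside any preimage $f^{-j}$, let alone inside any reflected copy $f^{-j}_{(k)}$ with $k\geq1$ (these are subsets of the $f^{-j}_{(0)}$). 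Consequently the hypothesis of Lemma \ref{key} that the relevant pair not be contained in a preimage of any deeper partition holds automatically, Lemma \ref{key1} is never invoked, and your entire apparatus of reflected partitions, Remark \ref{CP}, the matching of scaling factors across level boundaries, and the termination of the descent through generation indices addresses configurations that cannot occur here; that machinery is precisely what Propositions \ref{samegap} and \ref{samepreimage} need, where the points are allowed to sit inside preimages. Likewise the ``boundary case where $y$ is a common endpoint of $x$ and $z$'' does not arise: for three distinct points of $K_f$ the separating level $n$ is always finite by Corollary \ref{cor:trouzero}. So the proposal is correct and contains the paper's proof, but most of its length is spent on cases the hypothesis rules out, and it misattributes the ``real work'' to a bookkeeping problem that is absent from this proposition.
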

\begin{proof}

Let $x,y,z\in K$ such that $|x,y|=|y,z|$. 
Let $n>0$ be the smallest natural number such that a primage $f^{-i_n}$ of $\F_n$ 
separates two of the three points $x$, $y$ and $z$. 
This means in particular that $x,y,z$ are contained in a same gap $F_{n-1}$ 
of the dynamical partition $\F_{n-1}$. 
Because of symmetry we may  suppose that $f^{-i_n}$ separates $y$ and $z$. 
So, by Remark \ref{ppropertiesofconstraction}, $g^{-i_n}\subset (\varphi(y), \varphi(z))$ and by Lemma \ref{bounpreim} there exists a positive constant $C_1$ such that  
\[
\frac{|\varphi(x),\varphi(y)|}{|\varphi(y),\varphi(z)|}\leq\frac{|G_{n-1}|}{|g^{-i_n}|}\leq C_1.
\]

For the other inequality observe by Lemma \ref{bounpreim} that, there exists a positive constant $C_2$ such that $|x,y|=|y,z|\geq |f^{-i_n}|\geq{C_2} |F_{n-1}|$. 
So, by Lemma \ref{key} there exists a positive constant $C_3$ such that
\[
\frac{|\varphi(y),\varphi(z)|}{|\varphi(x),\varphi(y)|}\leq{C_3}.
\]
The proof is now complete.
\end{proof}
\begin{rem}
Observe that Proposition \ref{qscantor} tells us that the conjugacy $\varphi_0$ constructed in Proposition \ref{Cantorhomeo} is quasi-symmetric.
\end{rem}

\begin{figure}

\begin{pspicture}(0,-1)(10,3)
%%\psline[arrows={(-)}](1.8,0)(4.5,0)\uput{0.5}[-90](3,1.5){${f^{-k_n-1}}$}
%% \psline[linewidth=0.5pt]{ }(2.5,1.75)(2.5,0.25)
   \psline[arrows={-)}](0,0)(2,0)
 \psdot(3.5,0)\uput{0.4}[-90](3.5,0){$x$}
 
  %\psline[arrows={-}](4,0)(8,0)
  \uput{0.5}[-90](5,1.5){${F^{k+1}_{n}}$}
  
 \psdot(6,0)\uput{0.4}[-90](6,0){$y$}
 
\psline[arrows={(-)}](7,0)(10.2,0)
\uput{0.5}[-90](9,1.5){${f^{-k_n}}$}

 \psdot(9.3,0)\uput{0.4}[-90](9.3,0){$z$}
 
  %\psline[arrows={-}](11.5,0)(13,0)
  
  \uput{0.5}[-90](12.3,1.5){${F^{k}_{n}}$}
  
   \psline[arrows={(-}](14,0)(15,0)
 
 \psline[linewidth=0.25pt]{-}(1,0)(15,0)

\end{pspicture}
\caption{}
\label{fig:primera}
\end{figure}

\begin{figure}

\begin{pspicture}(0,-1)(10,3)
%%\psline[arrows={(-)}](1.8,0)(4.5,0)\uput{0.5}[-90](3,1.5){${P_{k-1,n}}$}
%% \psline[linewidth=0.5pt]{ }(2.5,1.75)(2.5,0.25)

  \psline[arrows={-}](6,0)(8,0)\uput{0.5}[-90](3,1.5){${F_{n}^{k+1}}$}
  
   \psdot(6,0)\uput{0.4}[-90](6,0){$x$}
  
 \psdot(9,0)\uput{0.4}[-90](9,0){$y$}
 
\psline[arrows={(-)}](4.6,0)(10,0)\uput{0.5}[-90](8,1.5){${f^{-k_n}}$}

 %\psline[arrows={-)}](7.8,0)(8.8,0)\uput{0.5}[-90](8.2,1){$I_{l}$}
% 
%%\psline[arrows={(-)}](8.8,0)(10,0)\uput{0.5}[-90](9.3,1){${I_c}$}
%
% %\psline[arrows={(-}](10,0)(11.2,0)\uput{0.5}[-90](10.7,1){${I_r}$}
% 
 \psdot(12.3,0)\uput{0.4}[-90](12.3,0){$z$}
 
 \uput{0.5}[-90](12.3,1.5){${F^{k}_{n}}$}
  
   \psline[arrows={(-}](14,0)(15,0)
 
 \psline[linewidth=0.25pt]{-}(1,0)(15,0)

\end{pspicture}
\caption{}
\label{fig:seconda}
\end{figure}

\begin{figure}
\begin{pspicture}(0,-1)(10,3)
%\psline[arrows={(-)}](1.8,0)(4.5,0)\uput{0.5}[-90](3,1.5){${F_{k-1,n}}$}
 \psdot(3,0)\uput{0.4}[-90](3,0){$x$}
 
  \psline[arrows={-}](6,0)(8,0)\uput{0.5}[-90](3,1.5){${F_{n}^{k+1}}$}
  
 \psdot(7,0)\uput{0.4}[-90](7,0){$y$}
 
\psline[arrows={(-)}](4.6,0)(10,0)\uput{0.5}[-90](8,1.5){${f^{-k_n}}$}

 %\psline[arrows={-)}](7.8,0)(8.8,0)\uput{0.5}[-90](8.2,1){$I_{l}$}
% 
%%\psline[arrows={(-)}](8.8,0)(10,0)\uput{0.5}[-90](9.3,1){${I_c}$}
%
% %\psline[arrows={(-}](10,0)(11.2,0)\uput{0.5}[-90](10.7,1){${I_r}$}
% 
 \psdot(12.3,0)\uput{0.4}[-90](12.3,0){$z$}
 
\uput{0.5}[-90](12.3,1.5){${F^{k}_{n}}$}
  
   \psline[arrows={(-}](14,0)(15,0)
 
 \psline[linewidth=0.25pt]{-}(1,0)(15,0)
\end{pspicture}
\caption{}
\label{fig:terza}
\end{figure}

\begin{prop}\label{samegap}
There exists a positive constant $C$ such that for any gap $F_{n-1}$ of $\F_{n-1}$ and
any $x,y,z\in F_{n-1}$ we have
\begin{equation}
C^{-1}\leq\frac{|\varphi(x),\varphi(y)|}{|\varphi(y),\varphi(z)|}\leq C,
\end{equation}
provided that $|x,y|=|y,z|$ and $x,y,z$ do not all belong to the same
preimage of any subsequent generation. \end{prop}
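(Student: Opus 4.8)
The plan is to reduce Proposition \ref{samegap} to the two ``key'' distortion estimates, Lemma \ref{key} and Lemma \ref{key1}, together with the comparability lemmas, by performing a careful case analysis governed by how the triple $x,y,z$ sits inside the dynamical partitions of generation $(0)$ refining $F_{n-1}$. First I would fix $x,y,z\in F_{n-1}$ with $|x,y|=|y,z|$ and, exactly as in the proof of Proposition \ref{qscantor}, let $n'\geq n$ be the smallest generation such that some preimage $f^{-i_{n'}}$ of $\F_{n'}$ (we may have to pass to the refined partitions $\F_{\scriptscriptstyle(k)}$ of Remark \ref{howfillpreimages}, using Remark \ref{comnew} and Remark \ref{comnew2}, since $F_{n-1}$ itself need not be a genuine gap of a dynamical partition of $f$ — but the hypothesis rules out the degenerate possibility that $x,y,z$ lie forever in one preimage) separates two of the three points. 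By symmetry assume it separates $y$ and $z$; then $x$ lies on the same side as one of them, and $x,y,z$ all lie in a common gap $F_{n'-1}$ of $\F_{n'-1}$.

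The upper bound on $|\varphi(x),\varphi(y)|/|\varphi(y),\varphi(z)|$ follows as in Proposition \ref{qscantor}: by Remark \ref{ppropertiesofconstraction} the preimage $g^{-i_{n'}}$ lies in $(\varphi(y),\varphi(z))$, while $\varphi(x),\varphi(y),\varphi(z)$ all lie in $G_{n'-1}$, and Lemma \ref{bounpreim} (with Lemma \ref{comparability}) bounds $|G_{n'-1}|/|g^{-i_{n'}}|$. The content is the reverse inequality, i.e. bounding $|\varphi(y),\varphi(z)|/|\varphi(x),\varphi(y)|$, and this is where the case analysis enters. The new feature compared with Proposition \ref{qscantor} is that $x,y,z$ need not be in $K_f$, so $|x,y|\geq|f^{-i_{n'}}|$ may fail; instead one only knows $|x,y|=|y,z|$ and that $f^{-i_{n'}}$ separates $y,z$. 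I would split according to where $x$ sits relative to $f^{-i_{n'}}$ and the two gaps adjacent to it (this is precisely what Figures \ref{fig:primera}, \ref{fig:seconda}, \ref{fig:terza} illustrate): (i) if $|x,y|$ is comparable to $|F_{n'-1}|$, apply Lemma \ref{key} or Lemma \ref{key1} directly with the quadruple $(x,y,z,\text{endpoint})$; (ii) if $x$ is close to $y$, then since $|x,y|=|y,z|$ the pair $y,z$ is also small, and one locates a preimage or right/left extreme preimage of a suitable generation $\F_{\scriptscriptstyle(k+\beta)}$ sitting inside $(x,y)$ or $(y,z)$, whose image under $\varphi$ is comparable to $(\varphi(x),\varphi(y))$ by Remark \ref{CP} and Remark \ref{ppropertiesofconstraction}; (iii) the remaining configurations, where $x$ and $y$ straddle an endpoint of $f^{-i_{n'}}$ or of an adjacent gap, are handled by the same straddling argument used inside the proof of Lemma \ref{key1}, invoking Lemma \ref{bboungap}, Lemma \ref{boungap} and Lemma \ref{moveFromPreimageRight} to produce a comparable gap with a shared endpoint.

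The main obstacle I anticipate is case (ii): when $x$ is very close to $y$ one cannot use a single fixed-generation partition element, and must instead iterate the reflection construction of Proposition \ref{extension} a bounded number of times — the bound on the number of steps comes from $|y,z|=|x,y|$ together with Lemma \ref{cor:trouzero1} and the geometric comparability of Lemma \ref{comparability} — to find an extreme preimage trapped between $x$ and $y$ whose $\varphi$-image is controlled. Keeping the constants uniform here requires that each reflection only distorts ratios by a bounded factor, which is exactly the content of Remark \ref{comnew} and Remark \ref{comnew2} asserting that all the comparability lemmas survive in the partitions $\F_{\scriptscriptstyle(k)}$; once that is in hand, the finitely many cases each yield a uniform constant, and taking the maximum gives the $C$ in the statement.
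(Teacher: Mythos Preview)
Your overall strategy --- reduce to Lemmas~\ref{key} and~\ref{key1} via a case analysis, using Lemma~\ref{moveFromPreimageRight} and the comparability lemmas to pin down the right scale --- is indeed how the paper proceeds. But the opening reduction you propose does not work. You choose $n'$ minimal so that some preimage (possibly of a reflected partition $\F_{\scriptscriptstyle(k)}$) separates two of the points, and then assert that all three lie in a common gap $F_{n'-1}$ of $\F_{n'-1}$. This fails precisely in the configurations that distinguish the present proposition from Proposition~\ref{qscantor}. In the situation of Figure~\ref{fig:primera}, for instance, $x,y\in F^{k+1}_{n}$ and $z\in f^{-k_n}$; at level $n$ no preimage lies strictly between any pair, so $n'>n$, yet $z$ sits inside a preimage of $\F_n$ and therefore in no gap of $\F_{n'-1}$ for any $n'-1\geq n$. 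Your ``easy'' bound, which relies on $\varphi(x),\varphi(y),\varphi(z)\in G_{n'-1}$, thus breaks down, and in fact neither direction of the inequality is cheap in this configuration.

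The paper organises differently: take $n$ maximal with $x,y,z$ in a single gap $F_{n-1}$ and then split into five explicit cases according to how the three points are distributed among the level-$n$ gaps and preimages inside $F_{n-1}$ (Figures~\ref{fig:primera}--\ref{fig:terza} plus two further cases). The idea missing from your outline is the mirror-point device of Case~1: when $z$ lies in the adjacent preimage $f^{-k_n}$, one introduces $\tilde z\in F^{k+1}_{n}$ with $|\tilde z,r(F^{k+1}_{n})|=|r(F^{k+1}_{n}),z|$; the hypothesis $|x,y|=|y,z|$ forces $\tilde z\in(x,r(F^{k+1}_{n}))\subset F_m$, where $F_m$ is the gap supplied by Lemma~\ref{moveFromPreimageRight}, and the bound on $|\varphi(y),\varphi(z)|$ is transferred to bounds on $|\varphi(y),\varphi(r(F^{k+1}_{n}))|$ and $|\varphi(\tilde z),\varphi(r(F^{k+1}_{n}))|$, both of which now live inside the single gap $G_m$ where Lemma~\ref{key} (or Lemma~\ref{key1}) applies. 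Without this device your case~(ii) is incomplete: the claim that one needs only a \emph{bounded} number of reflection steps is not justified, since $|x,y|/|F_{n-1}|$ can be arbitrarily small, and it is Lemma~\ref{moveFromPreimageRight} --- not iteration of the reflection construction --- that locates the correct scale in one stroke.
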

\begin{proof}
Without loss of generality we may suppose that $n$ is the biggest natural number such that the points $x,y$ and $z$ 
belong to a  gap $F_{n-1}$ of $\F_{n-1}$.

The proof goes by cases:
\begin{enumerate}
\item Suppose that $x$ and $y$ belong to a gap $F^{k+1}_{n}$ of the dynamical partition $\F_n$ 
and that $z$ is in the adjacent right preimage $f^{-k_n}$.

 The reader may refer to Figure \ref{fig:primera}.\\

By Lemma \ref{moveFromPreimageRight} there exists $C>0$ and a gap $F_{m}$ of 
the dynamical partition $\F_m$, $m\geq n$ such that 
\begin{equation}\label{betterpreimage}
|F_{m}|\leq C|x,r(F^{k+1}_{n})|.
\end{equation}
 Let us consider the point $\tilde z\in F^{k+1}_{n}$ such that $|\tilde z, r(F^{k+1}_{n})|=|r(F^{k+1}_{n}), z|$. 
Because of $|x,y|=|y,z|$, we have  $\tilde z\in (x,r(F^{k+1}_{n}))\subset F_m$. 

Further
\begin{equation}\label{inequality1}
|F_m|\leq 2C |x,y|
\end{equation}
and
\[
\frac{|\varphi(y),\varphi(z)|}{|\varphi(x),\varphi(y)|}\leq\frac{2\max(|\varphi(y),\varphi(r(G^{k+1}_{n}))|,|\varphi(\tilde{z}), \varphi(r(G^{k+1}_{n}))|)}{|\varphi(x),\varphi(y)|}\leq 2C_1. 
\]

where the constant $C_1$ comes from Lemma \ref{key} if $(x,y)$ is not contained in a preimage of a subsequent generation and from Lemma \ref{key1} otherwise.

For the other inequality, 
\begin{enumerate}
\item If $|r(F^{k+1}_{n}),z|\geq\frac{1}{2}|y,z|$, then 
\begin{eqnarray}\label{frequentinequality}
|F_m|&\leq &C |x,r(F^{k+1}_{n})|\leq C 2 |y,z|\\&\leq& C4 |r(F^{k+1}_{n}),z|=C4 |\tilde z, r(F^{k+1}_{n})|.
\end{eqnarray}

\item If $|r(F^{k+1}_{n}),z|<\frac{1}{2}|y,z|$ then
\[
|F_m|\leq C |x,r(F^{k+1}_{n})|=C(|x,y|+|y,r(F^{k+1}_{n})|)\leq C 3 |y,r(F^{k+1}_{n})|.
\]
\end{enumerate}
In both the cases we conclude as before using Lemma \ref{key}.

\item Suppose that $x,y$ belong to a preimage $f^{-k_n}$ of the dynamical partition $\F_n$ and that $z$ is in the adjacent right gap $F^{k}_{n}$.
The reader may refer to Figure \ref{fig:seconda}.

 Recall that by the construction of $\varphi$, $f^{-k_n}$ 
is divided into four sub-sets $F_{\scriptscriptstyle (1)}^{j_{k_1}}$, $f_{\scriptscriptstyle (1)}^{-j_{k_1}}$, 
$f_{\scriptscriptstyle (1)}^{-j_{k_1+1}}$ and $F_{\scriptscriptstyle (1)}^{j_{k_1+1}}$ (see Remark \ref{howfillpreimages}) all comparable with $f^{-k_n}$.
%Recall that by the construction of $\varphi$, $f^{-k+1}_{n}$ is divided into four sub-sets $L_{s}$, $R_{l,s}$, $L_{r,s}$ and $R_{s}$ all comparable with $f^{-k+1}_{n}$.

\begin{enumerate}
\item Suppose now that $x,y\in F_{\scriptscriptstyle (1)}^{j_{k_1}}\cup f_{\scriptscriptstyle (1)}^{-j_{k_1}}\cup  f_{\scriptscriptstyle (1)}^{-j_{k_1+1}} $. Notice that $|x,y|=|y,z|\geq | F_{\scriptscriptstyle (1)}^{j_{k_1+1}}|\geq C_2 |F_{n-1} |$, so by Lemma \ref{key1} and by Remark \ref{comnew2} there exists a positive constant $C_3$ such that 

\begin{eqnarray*}
\frac{|\varphi(y),\varphi(z)|}{|\varphi(x),\varphi(y)|}&\leq C_3
\end{eqnarray*}

For the other inequality it is enough to observe that $F_{\scriptscriptstyle (1)}^{j_{k_1+1}}\subset (y,z)$.

Observe that the same kind of argument holds also for $x\in F_{\scriptscriptstyle (1)}^{j_{k_1}}\cup f_{\scriptscriptstyle (1)}^{-j_{k_1}}$ and $y\in  F_{\scriptscriptstyle (1)}^{j_{k_1+1}}$ (in fact in this case $f_{\scriptscriptstyle (1)}^{-j_{k_1+1}}\subset (x,y)$).

\item Suppose now that $x,y\in F_{\scriptscriptstyle (1)}^{j_{k_1+1}}$. By Lemma \ref{moveFromPreimageRight} and Remark \ref{comnew} there exists $C_4>0$ and a gap $F_{\scriptscriptstyle (1)}^{j_{k_2}}$ of 
the dynamical partition $\F_{\scriptscriptstyle (1),k_2}$, $k_2\geq k_1+1$ such that 
\begin{equation*}
|F_{\scriptscriptstyle (1)}^{j_{k_2}}|\leq C_4|x,r(F_{\scriptscriptstyle (1)}^{j_{k_2}})|.
\end{equation*}
Suppose now that $|y,l(F^{k}_{n})|\geq \frac{1}{2}|y,z|$ (the case $ |y,l(F^{k}_{n})|< \frac{1}{2}|y,z|$ is analogous). So, $|F_{\scriptscriptstyle (1)}^{j_{k_2}}|\leq C_4|x,r(F_{\scriptscriptstyle (1),k_1+1}^{-j})|\leq C_4|x,y|+|y,l(F^{k}_{n})|\leq 2C_4 |y,l(F^{k}_{n})|$. As in point 1, by Lemma \ref{key} and Remark \ref{comnew} there exists a positive constant $C_5$ such that
\[
\frac{|\varphi(x),\varphi(y)|}{|\varphi(y),\varphi(z)|}
\leq C_5
\]
For the other inequality we notice that $|F_{\scriptscriptstyle (1)}^{j_{k_2}}|\leq C_4|x,l(F^{k}_{n})|=C_4(|x,y|+|y,l(F^{k}_{n})|)\leq 2C_4|x,y|$. We conclude applying Lemma \ref{key} and Remark \ref{comnew} if  $(x,y)$ are not contained in a preimage of some next generation or  Lemma \ref{key1} and Remark \ref{comnew2} otherwise.
\item Assume that $x\in F_{\scriptscriptstyle (1)}^{j_{k_1}}\cup f_{\scriptscriptstyle (1)}^{-j_{k_1}}\cup  f_{\scriptscriptstyle (1)}^{-j_{k_1+1}} $ and $y\in F_{\scriptscriptstyle (1)}^{j_{k_1+1}}$. Observe that $|x,y|=\frac{1}{2}(|x,y|+|y,z|)\geq \frac{1}{2}  |F_{\scriptscriptstyle (1)}^{j_{k_1+1}}|\geq \frac{C_6}{2}|F_{n-1}| $. By Lemma \ref{key1},

\begin{eqnarray*}
\frac{|\varphi(y),\varphi(z)|}{|\varphi(x),\varphi(y)|}&\leq C_7
\end{eqnarray*}
For the other inequalty we use the same argument and Lemma \ref{key}.
\end{enumerate}

\item Suppose now that $x\in F^{k+1}_{n}$, $y\in f^{-k_n}$ and $z\in F^k_{n}$. The reader may refer to Figure \ref{fig:terza}.  As below we use notations by Remark \ref{howfillpreimages}.
%Recall that by the construction of $\varphi$, $f^{-k}_{n}$ is divided into four sub-sets $L_{s}$, $R_{l,s}$, $L_{r,s}$ and $R_{s}$ all comparable with $f^{-k}_{n}$. 

We can suppose that $y\in F_{\scriptscriptstyle (1)}^{j_{k_1}}\cup f_{\scriptscriptstyle (1)}^{-j_{k_1}} $. Because of symmetry the case $y\in f_{\scriptscriptstyle (1)}^{-j_{k_1+1}}\cup F_{\scriptscriptstyle (1)}^{j_{k_1+1}}$ is completely analogous. 

Observe that $(y,z)$ contains  $f_{\scriptscriptstyle (1)}^{-j_{k_1+1}}\cup F_{\scriptscriptstyle (1)}^{j_{k_1+1}}$, then by Lemma \ref{key}

\[
\frac{|\varphi(x),\varphi(y)|}{|\varphi(y),\varphi(z)|}\leq C_8.
\]
For the other inequality we suppose that $|l(f^{-k_n}), y|\geq \frac{1}{2}|x,y|$. Then $|l(f^{-k_n}), y|\geq \frac{1}{2}|x, y|=\frac{1}{2}|y, z|\geq \frac{1}{2} |f_{\scriptscriptstyle (1)}^{-j_{k_1+1}}\cup F_{\scriptscriptstyle (1)}^{j_{k_1+1}}|\geq \frac{C_9}{2}|F_{n-1}|$. As before the conclusion comes from Lemma \ref{key1}. 

The last case to be considered is $|l(f^{-k_n}), y|< \frac{1}{2}|x, y|$. In this situation $|x,l(f^{-k_n}) |> \frac{1}{2}|x, y|=\frac{1}{2}|y, z|\geq \frac{1}{2}|f_{\scriptscriptstyle (1)}^{-j_{k_1+1}}\cup F_{\scriptscriptstyle (1)}^{j_{k_1+1}}|\geq \frac{C_7}{2}|F_{n-1}|$. By Lemma \ref{key} also this case is settled.

 \item Suppose now that $x\in f^{-k}_{n}$, $y\in F^k_{n}$ and $z\in f^{-k+1}_{n}$. By Lemma \ref{moveFromPreimageRight} there exists $C_{10}>0$ and a gap $F_{m}$ of 
the dynamical partition $\F_m$, $m\geq n$ such that 
\begin{equation}\label{FM}
|F_{m}|\leq C_{10}|y,l(f^{-k+1}_{n})|.
\end{equation}
Moreover we recall that by Remark \ref{howfillpreimages}, $ f^{-k}_{n}$ is divided into four subsets $F_{\scriptscriptstyle (1)}^{k_{k_1}}$, $f_{\scriptscriptstyle (1)}^{-k_{k_1}}$, 
$f_{\scriptscriptstyle (1)}^{-k_{k_1+1}}$, $F_{\scriptscriptstyle (1)}^{k_{k_1+1}}$ and $f^{-k+1}_{n}$ into $F_{\scriptscriptstyle (1)}^{k+1_{k_1}}$, $f_{\scriptscriptstyle (1)}^{-k+1_{k_1}}$, 
$f_{\scriptscriptstyle (1)}^{-k+1_{k_1+1}}$, $F_{\scriptscriptstyle (1)}^{k+1_{k_1+1}}$. %We denote by $\tilde z= l(f^{-k+1}_{n})$ and by $\tilde x=r( f^{-k}_{n})$.
 \begin{itemize}
 \item [(a)]Suppose that $x\in F_{\scriptscriptstyle (1)}^{k_{k_1}}\cup f_{\scriptscriptstyle (1)}^{-k_{k_1}}\cup f_{\scriptscriptstyle (1)}^{-k_{k_1+1}}$. In other worlds, $(x,y)$ contains $ F_{\scriptscriptstyle (1)}^{k_{k_1+1}}$. We conclude as in the other cases using Lemma \ref{key}.
%\[
%\frac{|\varphi(y),\varphi(z)|}{|\varphi(x),\varphi(y)|}\leq C_8
%\]
%
%
%For the other inequality we suppose that $|y,l(f^{-k+1}_{n})|\geq \frac{1}{2}|y,z|$. Then $|y,l(f^{-k+1}_{n})|\geq\frac{1}{2}C | F_{\scriptscriptstyle (1)}^{k_{k_1+1}}|$ and we apply Lemma \ref{key}.
%
%
%
%The case $|y,l(f^{-k+1}_{n})|<\frac{1}{2}|y,z|$ is completely analogous.

\item [(b)] Suppose now that $x\in  F_{\scriptscriptstyle (1)}^{k_{k_1+1}}$. If $z\in f_{\scriptscriptstyle (1)}^{-k+1_{k_1}}\cup f_{\scriptscriptstyle (1)}^{-k+1_{k_1+1}}\cup  F_{\scriptscriptstyle (1)}^{k_{k_1+1}}$ because of symmetry the situation has already been analyzed in the previous point. We can then suppose that $z\in F_{\scriptscriptstyle (1)}^{k+1_{k_1}}$. If $|r( f^{-k}_{n}), y|\geq \frac{1}{2}|x,y|$, then $4|r( f^{-k}_{n}), y|\geq |x,y|+|y,z|\geq |F_n^k|$. By Lemma \ref{key}
\[
\frac{|\varphi(y),\varphi(z)|}{|\varphi(x),\varphi(y)|}\leq C_{11}
\]
The case $|r( f^{-k}_{n}), y|< \frac{1}{2}|x,y|$ is analogous. 

For the other inequality, because of symmetry the procedure is exactly the same using $ l(f^{-k+1}_{n})$ at the place of $r( f^{-k}_{n})$.
 \end{itemize}
\item Suppose that or $x,y$ or $y,z$ are separated by at least one preimage or one gap of the partition $n$ which will be denoted by $F_n$. Without loss of generality we can suppose that $F_n$ separate $x$ and $y$, then
\[
\frac{|\varphi(y),\varphi(z)|}{|\varphi(x),\varphi(y)|}\leq\frac{|G_{n-1}|}{|G_n|}\leq C_{12}.
\]
For the other inequality we observe that $|y,z|=|x,y|\geq |F_n|\geq C |F_{n-1}|$ and we apply Lemma \ref{key}.

\end{enumerate}

\end{proof}
\begin{prop}\label{samepreimage}
There exists a positive constant $C$ such that for any preimage $f^{-n}$ and
any $x,y,z\in f^{-n}$ we have
\begin{equation}
C^{-1}\leq\frac{|\varphi(x),\varphi(y)|}{|\varphi(y),\varphi(z)|}\leq C,
\end{equation}
provided that $|x,y|=|y,z|$.
\end{prop}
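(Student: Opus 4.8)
The plan is to reduce Proposition~\ref{samepreimage} to the case already handled in Proposition~\ref{samegap}, using the self-similar structure of the construction of $\varphi$ inside a preimage. Fix a preimage $f^{-n}=f^{-n}_{\scriptscriptstyle(0)}$ and three points $x,y,z\in f^{-n}$ with $|x,y|=|y,z|$. By Remark~\ref{howfillpreimages}, the construction of $\varphi$ equips $f^{-n}_{\scriptscriptstyle(0)}$ with a ``dynamical partition'' $\F_{\scriptscriptstyle(1),m}$, $m\ge 1$, whose gaps and preimages $F^{j_m}_{\scriptscriptstyle(1)}$, $f^{-j_m}_{\scriptscriptstyle(1)}$ are comparable with the corresponding elements of some genuine dynamical partition $\F_{\scriptscriptstyle(0),m}$; and $\varphi$ maps this structure onto the corresponding structure in $g^{-n}_{\scriptscriptstyle(0)}$. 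The key point, recorded in Remark~\ref{comnew} and Remark~\ref{comnew2}, is that all the comparability lemmas (Lemma~\ref{bboungap}, Lemma~\ref{boungap1}, Lemma~\ref{boungap}, Lemma~\ref{bounpreim}, Lemma~\ref{moveFromPreimageRight}, Lemma~\ref{comparability}) and the two key estimates (Lemma~\ref{key}, Lemma~\ref{key1}) hold verbatim for the partitions $\F_{\scriptscriptstyle(k)}$.

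First I would let $N\ge 1$ be the largest integer such that $x,y,z$ all lie in a single element (gap or preimage) of $\F_{\scriptscriptstyle(1),N}$ inside $f^{-n}_{\scriptscriptstyle(0)}$; this exists because the lengths of elements of $\F_{\scriptscriptstyle(1),m}$ tend to zero by Lemma~\ref{cor:trouzero1}. If that common element is a \emph{gap} $F_{\scriptscriptstyle(1),N}$, then by maximality of $N$ the three points do not all lie in a single preimage of any subsequent generation of $\F_{\scriptscriptstyle(1)}$, and I may invoke Proposition~\ref{samegap} together with Remark~\ref{comnew} applied to the partition $\F_{\scriptscriptstyle(1)}$: the statement of Proposition~\ref{samegap} transfers because its proof uses only the lemmas listed above, all valid for $\F_{\scriptscriptstyle(1)}$. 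If instead the common element is a \emph{preimage} $f^{-j_N}_{\scriptscriptstyle(1)}$, then I recurse: inside $f^{-j_N}_{\scriptscriptstyle(1)}$ the construction again reproduces a partition $\F_{\scriptscriptstyle(2)}$, and I repeat the dichotomy. By the hypothesis $|x,y|=|y,z|>0$ and Lemma~\ref{comparability} (which forces the lengths of deeper preimages containing all three points to shrink at a definite rate relative to the fixed gap scale), this descent terminates after finitely many steps at a generation where $x,y,z$ land in a common gap, or where they are separated by some element of a partition $\F_{\scriptscriptstyle(k)}$. In the latter separating case one argues exactly as in case~5 of the proof of Proposition~\ref{samegap}: the separating preimage or gap of $\G_{\scriptscriptstyle(k)}$ lies between two of the images, giving the upper bound by Lemma~\ref{bounpreim} or Lemma~\ref{boungap}, and the lower bound by Lemma~\ref{key} since $|x,y|=|y,z|$ dominates a definite fraction of the enclosing gap.

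To make this clean I would phrase it as: since $\varphi$ restricted to $f^{-n}_{\scriptscriptstyle(0)}$ is, by construction, conjugate (via the reflections defining $R$ and $L$, which are affine and hence distortion-free) to $\varphi$ restricted to a neighborhood carrying a genuine dynamical partition, the quasi-symmetry estimate for triples inside $f^{-n}_{\scriptscriptstyle(0)}$ is literally the same statement as Proposition~\ref{samegap} (and Proposition~\ref{qscantor}) one level down; the affine reflection maps contribute a uniformly bounded factor $\alpha/\alpha'$ which can itself be controlled since $f^{-n}_{\scriptscriptstyle(0)}$ and $g^{-n}_{\scriptscriptstyle(0)}$, as well as the comparable sub-preimages, have comparable ratios by Remark~\ref{CP} and Lemma~\ref{comparability}. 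Collecting the finitely many cases and taking $C$ to be the maximum of the constants produced yields the claim.

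The main obstacle I anticipate is bookkeeping the constants through the recursion: a priori each descent into a deeper preimage could cost a multiplicative factor, so I must argue that the descent has bounded depth. This is exactly where the third hypothesis-type input is needed — the equality $|x,y|=|y,z|$ forces $|y,z|\ge \tfrac12\,\mathrm{diam}$ of the innermost common element, so by Lemma~\ref{comparability} that element cannot be more than a bounded number of generations below $\F_{\scriptscriptstyle(1),N}$ before $x,y,z$ must split; hence only boundedly many steps occur and the constants stay uniform. The secondary subtlety, as in Proposition~\ref{samegap}, is that when $x,y$ (or $y,z$) happen to lie in a preimage of a still deeper generation one must use Lemma~\ref{key1} rather than Lemma~\ref{key} to get the relevant bound, but Remark~\ref{comnew2} guarantees this is available at every level.
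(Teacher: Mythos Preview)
Your overall strategy is the same as the paper's: exploit the self-similar structure inside $f^{-n}_{\scriptscriptstyle(0)}$ to reduce to the situation of Proposition~\ref{samegap} at some deeper reflection level, using Remarks~\ref{comnew} and~\ref{comnew2} to transport all the comparability lemmas. The paper implements this via the opening ``without loss of generality'' (pass to the deepest $f^{-j}_{\scriptscriptstyle(k)}$ containing all three points and relabel it as $f^{-n}$), and then runs an explicit case analysis according to how $x,y,z$ sit relative to the four pieces $F_{\scriptscriptstyle(1)}^{k_{k_1}}, f_{\scriptscriptstyle(1)}^{-k_{k_1}}, f_{\scriptscriptstyle(1)}^{-k_{k_1+1}}, F_{\scriptscriptstyle(1)}^{k_{k_1+1}}$, each case pointing back to the corresponding case of Proposition~\ref{samegap} or to a direct use of Lemma~\ref{key1}.

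Your handling of the recursion depth, however, contains a genuine error. The inequality you assert, that $|x,y|=|y,z|$ forces $|y,z|\ge \tfrac12\,\mathrm{diam}$ of the innermost common element, goes the wrong way: if $I$ is any interval containing $x,y,z$ then $|I|\ge |x,z|=2|y,z|$, so one only gets $|y,z|\le\tfrac12|I|$, and nothing prevents the three equidistant points from sitting arbitrarily deep inside nested preimages. Hence the depth of your descent is \emph{not} uniformly bounded over all triples, and your argument for uniform constants fails as written. The correct observation---which is precisely what the paper's WLOG together with Remarks~\ref{comnew} and~\ref{comnew2} encodes---is that no bound on the depth is needed: the case analysis at level $(k)$ uses only the comparability constants of the partitions $\F_{\scriptscriptstyle(k)}, \F_{\scriptscriptstyle(k+1)}, \F_{\scriptscriptstyle(k+2)}$ (the last two entering in finitely many places such as case~2(d)), and these constants are uniform in $k$. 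One descends once to the first level at which the three points are no longer all in a common preimage, applies the estimate there, and is done; there is no multiplicative accumulation.

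A smaller imprecision: your definition of $N$ as ``the largest integer such that $x,y,z$ all lie in a single element of $\F_{\scriptscriptstyle(1),N}$'' is ill-posed when that element is a preimage, since a preimage of $\F_{\scriptscriptstyle(1),N}$ remains an element of $\F_{\scriptscriptstyle(1),M}$ for every $M\ge N$. What you want is the largest $N$ for which they lie in a common \emph{gap}, or equivalently the smallest reflection level $(k)$ at which they cease to lie in a common level-$(k)$ preimage.
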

\begin{proof}
Without loss of generality we may assume that, for all $n>0$ $x,y,z$ are not all contained in a some preimage of the partition  $\F_{\scriptscriptstyle (n)}$.

Following Remark \ref{howfillpreimages} we consider the subdivision of $f^{-n}$ into four subsets $F_{\scriptscriptstyle (1)}^{k_{k_1}}$, $f_{\scriptscriptstyle (1)}^{-k_{k_1}}$, 
$f_{\scriptscriptstyle (1)}^{-k_{k_1+1}}$, $F_{\scriptscriptstyle (1)}^{k_{k_1+1}}$.

 % $f^{-n}(U)=(L_n\cup R_{l,n}, L_{r,n}\cup R_{n})$ 

\begin{enumerate}
\item Suppose that $x,y,z$ are all in $F_{\scriptscriptstyle (1)}^{k_{k_1}}\cup f_{\scriptscriptstyle (1)}^{-k_{k_1}}$, (because of symmetry the same arguments apply for the case $x,y,z\in f_{\scriptscriptstyle (1)}^{-k_{k_1+1}}\cup F_{\scriptscriptstyle (1)}^{k_{k_1+1}})$.

\begin{itemize}
\item[(a)]If $x,y\in F_{\scriptscriptstyle (1)}^{k_{k_1}}$ and $z\in f_{\scriptscriptstyle (1)}^{-k_{k_1}}$ we repeat the arguments of point 1 of Proposition \ref{samegap}.
\item[(b)] If $x\in F_{\scriptscriptstyle (1)}^{k_{k_1}}$ and $y,z\in f_{\scriptscriptstyle (1)}^{-k_{k_1}}$ we repeat the arguments of point 2 of Proposition \ref{samegap}. 
\item[(c)] If $x,y,z\in F_{\scriptscriptstyle (1)}^{k_{k_1}}$ we apply Proposition \ref{samegap} .
\end{itemize}

\item Suppose $x,y$ are in $F_{\scriptscriptstyle (1)}^{k_{k_1}}\cup f_{\scriptscriptstyle (1)}^{-k_{k_1}}$ and $z\in f_{\scriptscriptstyle (1)}^{-k_{k_1+1}}\cup F_{\scriptscriptstyle (1)}^{k_{k_1+1}}$ (because of symmetry the same argument can be used for $x\in F_{\scriptscriptstyle (1)}^{k_{k_1}}\cup f_{\scriptscriptstyle (1)}^{-k_{k_1}}$ and  $y,z\in f_{\scriptscriptstyle (1)}^{-k_{k_1+1}}\cup F_{\scriptscriptstyle (1)}^{k_{k_1+1}}$)

\begin{itemize}
\item [(a)] If $x,y$ are in $F_{\scriptscriptstyle (1)}^{k_{k_1}}\cup f_{\scriptscriptstyle (1)}^{-k_{k_1}}$ and $z\in F_{\scriptscriptstyle (1)}^{k_{k_1+1}}$, then 
\[
\frac{|\varphi(x),\varphi(y)|}{|\varphi(y),\varphi(z)|}\leq\frac{|g^{-n}|}{| g_{\scriptscriptstyle (1)}^{-k_{k_1+1}}|}\leq C_1
\]
For the other inequality we observe that $|x,y|=|y,z|\geq |  f_{\scriptscriptstyle (1)}^{-k_{k_1+1}}|\geq C_2 |F_{\scriptscriptstyle (1)}^{k_{k_1}}\cup f_{\scriptscriptstyle (1)}^{-k_{k_1}}|$. We conclude using Lemma \ref{key1} and Remark \ref{comnew2}.

\item [(b)] If $x,y$ are in $F_{\scriptscriptstyle (1)}^{k_{k_1}}$ and $z\in  f_{\scriptscriptstyle (1)}^{-k_{k_1+1}}$ then we use exactly the same arguments as in the previous point. 
In this case the key point is the fact that $(y,z)$ contains $ f_{\scriptscriptstyle (1)}^{-k_{k_1}}$.
\item [(c)] If $x\in F_{\scriptscriptstyle (1)}^{k_{k_1}}$, $y\in f_{\scriptscriptstyle (1)}^{-k_{k_1}}$ and $z\in  f_{\scriptscriptstyle (1)}^{-k_{k_1+1}}$. We observe that $|y,z|=|x,y|=\frac{1}{2}(|x,y|+|y,z|)\geq\frac{1}{2}|f_{\scriptscriptstyle (1)}^{-k_{k_1}}|$. For both inequalities apply Lemma \ref{key1} and Remark \ref{comnew2}. 

%by $\tilde x=r( F_{\scriptscriptstyle (1),k_1}^{-k})$ and by $\tilde z=l(f_{\scriptscriptstyle (1)}^{-k_{k_1+1}})$. Suppose that $|x,\tilde x|\geq \frac{1}{2}|x,y|$. Then $|f_{\scriptscriptstyle (1)}^{-k_{k_1}}|\leq 2|x,y|\leq 4 |x,\tilde x|$.  So, by Lemma  \ref{key1} there exists a constant $C_4$ such that
%\[
%\frac{|\varphi(y),\varphi(z)|}{|\varphi(x),\varphi(y)|}\leq C_4
%\]
%In order to prove the other inequality we consider separatly the two cases $|\tilde z,z|\geq\frac{1}{2} |y,z|$ and $|\tilde z,z|<\frac{1}{2} |y,z|$. With the same arguments than the previous step we conclude the proof. 
%
%It remains to analyze the case $|x,\tilde x|< \frac{1}{2}|x,y|$. It is enought to notice that $|f_{\scriptscriptstyle (1)}^{-k_{k_1}}|\leq 2|\tilde x,y|$ and use exactly the same arguments than before. 

\item [(d)]If $x,y\in f_{\scriptscriptstyle (1)}^{-k_{k_1}}$ and $z\in f_{\scriptscriptstyle (1)}^{-k_{k_1+1}}$. Without loss of generality we may assume that  $f_{\scriptscriptstyle (1)}^{-k_{k_1}}$ is the bigger right extreme preimage which contain $(x,y)$. Let $f_{\scriptscriptstyle (2)}^{-i}$ and $F_{\scriptscriptstyle (2)}^{i}$ be the preimage and the gap of $\F_{\scriptscriptstyle (2)}$ contained in $ f_{\scriptscriptstyle (1)}^{-k_{k_1}}$ and comparable with it. If $x,y\in F_{\scriptscriptstyle (2)}^{i} $ we use exactly the same argument as before ($f_{\scriptscriptstyle (2)}^{-i}\subset (y,z)$). We can then suppose that $x\in F_{\scriptscriptstyle (2)}^{i} $ and $y\in f_{\scriptscriptstyle (m+2)}^{-i}$.  Let $f_{\scriptscriptstyle (3)}^{-j}$ and $F_{\scriptscriptstyle (3)}^{j}$ be the preimage and the gap of $\F_{\scriptscriptstyle (3)}$ contained in $ f_{\scriptscriptstyle (2)}^{-i}$ and comparable with it. If $y\in f_{\scriptscriptstyle (3)}^{-j}$ then $F_{\scriptscriptstyle (3)}^{j}\subset (x,y)$ and if $y\in F_{\scriptscriptstyle (3)}^{j}$ then $f_{\scriptscriptstyle (3)}^{-j}\subset (y,z)$. In both the cases we use exactly the same arguments as before.

%Let We denote by $\tilde y=r( f_{\scriptscriptstyle (1)}^{-k_{k_1}})$ and by $\tilde z$ the point in $ f_{\scriptscriptstyle (1)}^{-k_{k_1}}$ such that $|\tilde z, \tilde y|=|\tilde y,z|$. By Lemma \ref{moveFromPreimageRight} there exists $C>0$ and a gap $F_{\scriptscriptstyle (1),k_2}$, $k_2\geq k_1+1$ such that 
%\begin{equation*}
%|F_{\scriptscriptstyle (1),k_2}|\leq C|x,\tilde y|.
%\end{equation*}
%Notice that $|F_{\scriptscriptstyle (1),k_2}|\leq C|x,\tilde y|\leq C(|x,y|+|y,\tilde y|) \leq 2C|x,y|$. Then by Lemma \ref{keycopy} there exists a postive constant $C_5$ such that 
%\[
%\frac{|\varphi(y),\varphi(z)|}{|\varphi(x),\varphi(y)|}\leq C_5
%\]
%For the other inequality let us start supposing $|y,\tilde y|\geq \frac{1}{2}|y,z|$. Then $|f_{(m)}|\leq  C|x,\tilde y|=C(|x,y|+|y,\tilde y|)\leq 3C|y,\tilde y| $.
%Then, by Lemma \ref{key} (or \ref{key1}) there exists a positive constant $C_6$ such that
%\[
%\frac{|\varphi(x),\varphi(y)|}{|\varphi(y),\varphi(z)|}\leq C_{6}.
%\]
%Suppose now that  $|y,\tilde y|< \frac{1}{2}|y,z|$. Then  $|f_{(m)}|\leq  C|x,\tilde y|=C(|x,y|+|y,\tilde y|)\leq 2C|y,z|\leq 4C|\tilde y,z|=4C |\tilde z,\tilde y| $. Using Lemma \ref{key} (or \ref{key1})  also this case is sattled. 
\end{itemize}
\end{enumerate}

\end{proof}

Finally we can state the main result of the section.
\begin{prop}

The circle homeomorphism $\varphi$ constructed in Proposition \ref{extension} is quasi-symmetric.
\end{prop}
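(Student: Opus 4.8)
The plan is to assemble the final statement from the three preceding propositions, which together cover all possible configurations of a triple $x,y,z\in\S$ with $|x,y|=|y,z|$. Let $x,y,z\in\S$ with $|x,y|=|y,z|$; I must bound $|\varphi(x),\varphi(y)|/|\varphi(y),\varphi(z)|$ from above and below by a uniform constant. First I would run the same argument as at the start of the proof of Proposition \ref{qscantor}: let $n$ be the smallest natural number such that some preimage $f^{-i_n}$ of $\F_n$ separates two of the three points. Then $x,y,z$ all lie in a single gap $F_{n-1}$ of $\F_{n-1}$, and one preimage of $\F_n$ separates one of the pairs. This is precisely the situation of Proposition \ref{samegap}, provided $x,y,z$ do not all lie in a common preimage of a later generation. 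If they do lie in such a common preimage, take the \emph{largest} generation $m$ for which $x,y,z$ all lie in one preimage $f^{-k}_{\scriptscriptstyle(m)}$; then Proposition \ref{samepreimage} (together with Remark \ref{comnew2}, which transfers it to the partitions $\F_{\scriptscriptstyle(m)}$) applies directly.

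The key point making this a genuine dichotomy rather than an infinite regress is Proposition \ref{samepreimage}'s own reduction: its proof already shows that three points in a common preimage which are \emph{not} all contained in a preimage of a subsequent generation fall into the cases of Proposition \ref{samegap}. Hence the two propositions are mutually consistent and exhaustive. I would therefore state the argument as: either (i) there is a maximal generation in which $x,y,z$ share a preimage, and we invoke Proposition \ref{samepreimage} at that generation, or (ii) no single preimage of any generation contains all three, in which case $x,y,z$ share a gap $F_{n-1}$ of some $\F_{n-1}$ (take $n-1$ maximal with this property) and we invoke Proposition \ref{samegap}. In case (i) one must check that the "not all in a subsequent-generation preimage" hypothesis of Proposition \ref{samepreimage} is met — but that is automatic by maximality of $m$. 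In case (ii) the hypothesis "$x,y,z$ do not all belong to the same preimage of any subsequent generation" of Proposition \ref{samegap} is met because we are in case (ii).

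Finally, the quasi-symmetry constant: each of Propositions \ref{qscantor}, \ref{samegap}, \ref{samepreimage} supplies its own constant, and since $\varphi$ is an orientation-preserving circle homeomorphism the global constant is the maximum of the three (together with their reciprocals), which is uniform in $x,y,z$. I would then note that by the definition of quasi-symmetry in Subsection \ref{BD} — applied to the two adjacent intervals $I=(x,y)$ and $J=(y,z)$ with $|I|=|J|$ — the bound $C^{-1}\le |\varphi(I)|/|\varphi(J)|\le C$ for all such pairs is exactly the assertion that $\varphi$ is $C$-quasi-symmetric. The only mild subtlety worth a sentence is the case where the common gap in (ii) or the common preimage in (i) is degenerate or where two of the points coincide; these are trivial and can be dispatched in one line. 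I expect no substantive obstacle here: all the real-dynamics work has been done in the earlier lemmas, and this final proposition is purely a bookkeeping synthesis, so the main thing to get right is that the case split into "shares a preimage in some generation" versus "shares only a gap" is genuinely exhaustive, which follows from the structure of the nested partitions $\F_{\scriptscriptstyle(m),n}$ built in Proposition \ref{extension}.
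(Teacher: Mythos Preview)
Your proposal is correct and matches the paper's own proof, which consists of the single line ``Apply Proposition \ref{samepreimage} and Proposition \ref{samegap}.'' Your elaboration of the dichotomy (maximal generation sharing a preimage versus sharing only a gap) is exactly the bookkeeping that the paper leaves implicit; the invocation of Proposition \ref{qscantor} is harmless but redundant, since that case is already subsumed by Proposition \ref{samegap}.
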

\begin{proof}
Apply Proposition \ref{samepreimage} and Proposition \ref{samegap}.

\end{proof}
%Observe that, since the restriction of a quasi-symmetric map is quasi-symmetric (see \cite{Heinonen}), the function $\varphi_0$ constructed in Proposition \ref{Cantorhomeo} is clearly quasi-symmetric.

\section{Obstacle in the construction of a global \\ quasi-symmetric conjugacy}\label{noQS}
Observe that the quasi-symmetric homeomorphism $\varphi$ constructed in Theorem \ref{maintheo} conjugate $f$ and $g$ on $K_f\cup U_f$, 
i.e. $\varphi_{|K_f\cup U_f}\circ f=g\circ\varphi_{|K_g\cup U_g} $. 
 The aim of this section is to explain the difficulty to extend $\varphi$ to a quasi-symmetric conjugacy on the preimages of the flat interval. 

Let $f,g\in\Lsec$ with the same rotation number of bounded type. Observe that with a similar procedure used for proving Theorem \ref{maintheo} we can construct an homeomorphism $\varphi: K_f\cup U_f\to K_g\cup U_g$ which conjugate $f$ and $g$ on $K_f\cup U_f$. The most natural way to extend it to all the preimages of the flat interval is the following:
if $x\in f^{-n}$, then $\varphi(x)= (g^{-n}\circ\varphi\circ f^{n})(x)$. We get then a global conjugacy of $f$ and $g$ which is quasi-symmetric on $K_f\cup U_f$.

%\begin{proof}
%We start extending the homeomorphism $\tilde\varphi$ constructed in Proposition \ref{non-wandering} to the flat interval with the same procedure used in Proposition \ref{extension}. We copy part of the Cantor in the flat interval creating a new structure of preimages and gaps. In the same way we extend our function in the new preimages obtained in the flat interval. Now we have an homeomorphism $\varphi_1: K\cup U\to K'\cup U'$.  In order to have a conjugation, $\varphi$ is automatically extended on all the preimages, in fact if $x\in f^{-n}(U)$, then $\varphi(x)= g^{-n}(U)\circ\varphi_1\circ f^{n}(U)$. The function produced is an homeomorphism and in particular it is a global conjugation.
%\end{proof}

In order to prove that $\varphi$ is quasi-symmetric also on the preimages of the flat interval the first step would be to establish that for all $n$, the transition functions $f^n$ and $g^n$ are quasi-symmetric. Unfortunately for function in $\Lsec$ this is not true.

\begin{prop}
Let $n\in \N$ and let $P=f^{-i}$ be a preimage of the flat interval belonging to the dynamical partition $\F_n$. If $n$ is big enough, $f^{i}_{|P}$ is not quasi-symmetric.
\end{prop}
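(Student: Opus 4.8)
The plan is to show that the transition map $f^i$ restricted to the preimage $P=f^{-i}$ of the flat interval cannot be quasi-symmetric by exhibiting two adjacent intervals of equal length inside $P$ whose images under $f^i$ have wildly unbalanced lengths, with the imbalance growing without bound as $n\to\infty$. The natural place to look is near an endpoint of $P$ whose forward orbit hits a boundary point of $U_f$, since there the dynamics of $f$ inherits the critical behavior of exponent $\ell$ encoded by the representations $h_r((x-b)^\ell)$ and $h_l((a-x)^\ell)$.

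\textbf{Key steps.} First I would locate the relevant endpoint: $P=f^{-i}$ is mapped by $f^i$ onto $U_f=f^0$, so one endpoint of $P$, say $p$, satisfies $f^i(p)=b$ (or $a$). Near $p$ the map $f^i$ is a composition of $i$ maps, and the only non-diffeomorphism effect comes at the very last step where $f$ behaves like $y\mapsto h_r((y-b)^\ell)$; all the intermediate maps are $C^2$ diffeomorphisms on the relevant small intervals, so by the cross-ratio / Koebe-type distortion control of Subsection~\ref{distortion} (applied to a suitable chain of quadruples whose middle intervals avoid $f^0$, using Proposition~\ref{figo1} and Corollary~\ref{cor:trouzero} to keep the geometry under control) the composition $f^{i-1}$ on the pullback of a small neighborhood of $b$ has uniformly bounded distortion. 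Hence the non-quasi-symmetry of $f^i_{|P}$ near $p$ is governed entirely by the non-quasi-symmetry of the single-critical-point model $x\mapsto x^\ell$ at $0$, \emph{provided} the two equal-length test intervals one picks are small enough relative to $|P|$. Concretely, I would take intervals $I,J\subset P$ with common endpoint $p$, $|I|=|J|=\delta$, push them forward: $f^{i-1}(I)$ and $f^{i-1}(J)$ are two intervals with common endpoint near $b$, of comparable (up to a bounded factor) but not equal length $\asymp\delta\cdot(\text{derivative})$, and then the final application of $f$ distorts the ratio by a factor comparable to $(\delta/|P|)^{\ell-1}$-type quantity, which blows up as $\delta\to 0$. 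Since $P$ shrinks as $n\to\infty$, one must check that for $n$ large one can still fit such $\delta$-intervals inside $P$ with $\delta$ small in absolute terms while keeping $\delta$ a fixed fraction of $|P|$ — and here the key is that $|P|\to 0$, so even choosing $\delta$ a \emph{fixed} fraction of $|P|$ forces $\delta\to 0$ and the distortion factor $\to\infty$. This is exactly why the statement needs ``$n$ big enough.''

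\textbf{Main obstacle.} The delicate point is making the reduction ``$f^i_{|P}$ near its critical endpoint is, up to bounded distortion, the pure power map $x\mapsto x^\ell$'' fully rigorous. This requires: (i) choosing the chain of quadruples along the backward orbit $P=f^{-i},f^{-i+1},\dots,f^{-1},f^0$ so that the bounded-intersection-multiplicity hypothesis and the ``middle interval avoids $f^0$'' hypothesis of the cross-ratio tool both hold — which is where the dynamical partition structure \eqref{div} and the bounded-type assumption enter — and (ii) controlling the $C^2$ diffeomorphisms $h_r,h_l$ so that their contribution to the distortion ratio is bounded independently of $n$. Once this reduction is in place, the conclusion is immediate: for the model $x\mapsto x^\ell$ with $\ell\neq 1$, the ratio $|(I)^\ell|/|(J)^\ell|$ for adjacent equal-length intervals $I,J$ abutting $0$ tends to $0$ or $\infty$ as the common length shrinks, so no uniform quasi-symmetry constant $Q$ can hold, and since shrinking $P$ (equivalently, increasing $n$) forces us into that regime, $f^i_{|P}$ fails to be quasi-symmetric for all large $n$. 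I would also remark that this is consistent with, and quantifies, the ``unexpected distortion properties of the transition maps'' flagged in the introduction as the obstruction to a global quasi-symmetric conjugacy.
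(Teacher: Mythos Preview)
Your argument rests on a misidentification of where the criticality acts. You write that ``the only non-diffeomorphism effect comes at the very last step where $f$ behaves like $y\mapsto h_r((y-b)^\ell)$,'' but that representation governs $f$ for \emph{inputs} $y$ near $b$; the last step of $f^i_{|P}$ is $f\colon f^{-1}\to U_f$, and the domain $f^{-1}$ is a fixed interval (independent of $n$) bounded away from $\{a,b\}$, on which $f$ is a $C^2$ diffeomorphism with derivative bounded above and below. In fact every factor of the composition is a $C^2$ diffeomorphism --- the orbit $f^{-i},f^{-i+1},\dots,f^{-1}$ never meets the critical points --- so $f^i_{|P}$ is itself a $C^2$ diffeomorphism and there is no ``pure power map $x\mapsto x^\ell$'' to reduce to. Your Koebe-type claim for $f^{i-1}$ is therefore inverted: the cross-ratio inequality of Subsection~\ref{distortion} is one-sided and does not furnish a two-sided distortion bound; it is precisely $f^{i-1}\colon P\to f^{-1}$ whose distortion blows up with $n$, while the final step $f\colon f^{-1}\to U_f$ is the one with uniformly bounded distortion.

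The paper's proof exploits that one-sidedness rather than any single singular factor. Reducing to $i=q_n$, it fixes intervals $A,B$ at the two ends of $U_f$ with $|A|\asymp|B|\asymp|f^{-q_n}|$ and uses the cross-ratio expansion (together with Proposition~\ref{figo1} and Lemma~\ref{bboungap}) to show that the pullbacks $f^{-q_n}(A)$ and $f^{-q_n}(B)$ inside $P$ each have length at least a fixed constant times $|P|$. Hence the large middle piece $U_f\setminus(A\cup B)$, of length comparable to $|U_f|$, pulls back to a short subinterval of $P$; choosing $x,y,z\in P$ accordingly yields adjacent equal-length intervals whose $f^{q_n}$-images have length ratio comparable to $|f^{-q_n}|/|U_f|\to 0$. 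The obstruction is thus the accumulated nonlinearity of the long composition, not a power-law at one step.
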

\begin{proof}
Without loss of generality we can suppose that $i=q_n$. Let us consider two subsets, $A$ and $B$ of $U_f$ such that $l(A)=l(U_f)$, $r(A)=r(U_f)$. We also assume that $|A|$ and $|B|$ are comparable with $|f^{-q_n}|$.

Observe that 
\begin{eqnarray*}
\frac{|f^{-q_n}(A)|}{|f^{-2q_n}, r(f^{-q_n}(A)) |}&\geq& \frac{|f^{-2q_n}|}{|l(f^{-2q_n}), f^{-q_n}(A) |}\frac{|f^{-q_n}(A)|}{|f^{-2q_n}(A), r(f^{-q_n}(A)) |}\\&\geq & C_1 \frac{|f^{-q_n}|}{|l(f^{-q_n}), A) |}\frac{|A|}{|f^{-q_n}(A), r(A) |}\geq C_1 C_2 C_3
\end{eqnarray*}
where the constant $C_1$ comes from the expanding properties of the cross-ratio $Cr$ after $i$ iterate, $C_2$ by Proposition \ref{figo1} and $C_3$ by the hypothesis of comparability of $|A|$ and $|f^{-q_n}|$.

So, by the previous inequality and by Lemma \ref{bboungap}, there exist two constants $C_5$ and $C_6$ such that 
\[
|f^{-q_n}(A)|\geq C_5|f^{-2q_n}, r(f^{-q_n}(A))|\geq C_5C_6 |P|.
\]
In the same way we can prove that there exists a positive constant $C_7$ such that
\[
|f^{-q_n}(B)|\geq C_7 |P|.
\]
Let us consider now the following three points: $z=l(f^{-q_n}(B))$, $y=r(f^{-q_n}(A))$ and $x\in f^{-q_n}(A)$ such that $|x,y|=|y,z|$. Suppose that $f^{q_n}$ is quasi-symmetric then there exists a constant $C$ such that 
\[
C^{-1}\leq\frac{|f^{q_n}(x),f^{q_n}(y)|}{|f^{q_n}(y),f^{q_n}(z)|}\leq C.
\]
  In particular $|A|$ is comparable with $|U_f\setminus (A\cup B)|$ and this is not true for $n$ big (recall that we are assuming that $|A|$ is comparable with $|f^{-q_n}|$).  
\end{proof}
In conclusion, in order to have a global quasi-symmetric conjugacy we probably need some more hypothesis and a deeper control of the distortion properties of the transitions functions $f^n$. 

\section*{Acknowledgments}
I am very grateful to Prof. G. \'Swi\c atek for introduction to the topic and for many helpful discussions. 

\section{Appendix}
In this section we comment that our construction solve a more general analytic problem concerning the extension of quasi-symmetric homeomorphisms of Cantor sets.

Let $\{C_n\}_{n\in\N}$ be a bounded sequence of positive integers and let for all $n$, $\{I_{i_n}\}_{0\leq i_n\leq C_n}$ and $\{I'_{i_n}\}_{0\leq i_n\leq C_n}$ 
be two sequences of subintervals of $\S$. 
We suppose that for all $n\in\N$ two consecutive subinterval $I_{i_n}$ and $I_{i_n+1}$ (resp $I'_{i_n}$ and $I'_{i_n+1}$) 
are uniformly comparable and we consider the following Cantor sets
\[
K=\bigcap_{n\geq1}\left(\bigcup_{0\leq i_n\leq C_n}I_{i_n}\right),
\]

\[
K'=\bigcap_{n\geq1}\left(\bigcup_{0\leq i_n\leq C_n}I'_{i_n}\right)
\]
Using the proof of Theorem of \ref{maintheo} we deduce the following property:
\begin{theo}
There exists a quasi-symmetric homeomorphism $\varphi_0:K\to K'$ which can be extended to a circle quasi-symmetric homeomorphism.
\end{theo}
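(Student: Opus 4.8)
The plan is to imitate verbatim the three-step strategy used for Theorem \ref{maintheo}, observing that the only structural facts about the dynamical partitions $\F_n$ and $\G_n$ that were actually used are the ones now taken as hypotheses on the abstract nested sequences $\{I_{i_n}\}$ and $\{I'_{i_n}\}$. First I would construct the conjugacy $\varphi_0:K\to K'$ between the Cantor sets exactly as in Proposition \ref{Cantorhomeo}: a point of $K$ is coded by a sequence $\{k_n\}_{n\ge 1}$ with $0\le k_n\le C_n$, the same coding works for $K'$ because the combinatorial data $\{C_n\}$ is shared, and $\varphi_0$ is defined by transporting codes, $\varphi_0(x)=\bigcap_{n} I'_{k_n}$. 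Continuity of $\varphi_0$ and of $\varphi_0^{-1}$ follows from the fact that $\operatorname{diam} I_{i_n}\to 0$ and $\operatorname{diam} I'_{i_n}\to 0$ as $n\to\infty$ (a consequence of the uniform comparability of consecutive intervals together with boundedness of $\{C_n\}$, which forces exponential shrinking in the spirit of Corollary \ref{cor:trouzero}).

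Second I would extend $\varphi_0$ to a circle homeomorphism $\varphi$ by reproducing the reflection construction of Proposition \ref{extension}: inside each complementary gap of $K$ one builds, by successive affine reflections of pieces of $K$ about the endpoints of the gap, a nested family of ``fake preimages'' and ``fake gaps'' that carries the same combinatorial and metric structure as $K$ itself, and transports it by $\varphi_0$ to the corresponding gap of $K'$. The hypotheses on the $I_{i_n}$ are precisely what is needed to run the ``smallest $k$ with a comparability inequality'' selection and to get Remark \ref{howfillpreimages}, Remark \ref{extreme}/\ref{CP}–type statements in this abstract setting. Since the lengths of the constructed subintervals go to zero, $\bigcup_i K_{i}$ exhausts $\S$ on both sides, $\varphi$ is an order-preserving bijection, hence a homeomorphism.

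Third, and this is where essentially all the work of the paper is reused, I would verify that $\varphi$ is quasi-symmetric. The abstract analogues of Lemma \ref{bboungap}, Lemma \ref{boungap1}, Lemma \ref{boungap}, Lemma \ref{bounpreim}, Lemma \ref{moveFromPreimageRight}, Lemma \ref{comparability}, Remark \ref{ppropertiesofconstraction}, Lemma \ref{key}, Lemma \ref{key1} hold verbatim: each of those proofs only invoked the uniform comparability of adjacent partition elements, the bounded-type condition (now replaced by boundedness of $\{C_n\}$), and nested comparability, all of which are in the hypotheses. With these in hand, Proposition \ref{qscantor} gives quasi-symmetry of $\varphi_0$ on $K$, and Proposition \ref{samegap} together with Proposition \ref{samepreimage} give the quasi-symmetry of $\varphi$ for triples $x,y,z$ lying in a common gap or in a common constructed preimage; since for $|x,y|=|y,z|$ one of these configurations always occurs (take the finest generation not separating all three points), $\varphi$ is globally quasi-symmetric.

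The main obstacle, such as it is, is bookkeeping rather than mathematics: one must check that every invocation of ``bounded geometry'', ``rotation number of bounded type'', and ``the recursion \eqref{div} for the partition'' in Section \ref{distortion} can be replaced by the corresponding abstract hypothesis ($I_{i_n}\sim I_{i_n+1}$ uniformly, $C_n$ bounded, $K=\bigcap_n\bigcup_{i_n}I_{i_n}$ with the inclusion structure), and in particular that no use was secretly made of the cross-ratio expansion of Section \ref{distortion} — inspection shows the quasi-symmetry argument for $\varphi$ only uses it through Proposition \ref{figo1}, which is itself subsumed by the comparability hypothesis. Hence the theorem follows by copying the proof of Theorem \ref{maintheo} with this dictionary of replacements.
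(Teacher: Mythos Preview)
Your proposal is correct and is essentially the paper's own approach: the paper gives no independent argument for this theorem, stating only that it is deduced ``using the proof of Theorem \ref{maintheo}'', which is precisely the three-step transplantation you spell out. If anything, your write-up is more explicit than the paper's, since you identify which ingredients (comparability of consecutive intervals, boundedness of $\{C_n\}$) replace which dynamical facts and you verify that the cross-ratio distortion of Section \ref{distortion} enters only through conclusions already subsumed in the abstract hypotheses.
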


\end{document}